\title{An effective criterion for algebraic contractibility of rational curves}
\author{Pinaki Mondal\\ Weizmann Institute of Science\\ pinaki@math.toronto.edu}
\DeclareMathOperator\lt{LT}
\newcommand{\Rmnum}[1]{\expandafter\@slowromancap\romannumeral #1@}
\DeclareMathOperator\gr{gr}
\DeclareMathOperator\ord{ord}
\DeclareMathOperator\proj{Proj}
\newcommand{\scrB}{\ensuremath{\mathcal{B}}}
\newcommand{\scrF}{\ensuremath{\mathcal{F}}}
\newcommand{\scrV}{\ensuremath{\mathcal{V}}}
\newcommand{\cc}{\ensuremath{\mathbb{C}}}
\newcommand{\kk}{\ensuremath{\mathbb{K}}}
\newcommand{\pp}{\ensuremath{\mathbb{P}}}
\newcommand{\qq}{\ensuremath{\mathbb{Q}}}
\newcommand{\rr}{\ensuremath{\mathbb{R}}}
\newcommand{\zz}{\ensuremath{\mathbb{Z}}}
\newcommand{\ppp}{\ensuremath{\mathfrak{p}}}
\newcommand{\sheaf}{\ensuremath{\mathcal{O}}}
\newcommand{\im}{\ensuremath{\Rightarrow}}
\newcommand{\dsum}{\ensuremath{\bigoplus}}
\newtheorem{thm}{Theorem}[section]
\newtheorem*{thm*}{Theorem}
\newtheorem{lemma}[thm]{Lemma}
\newtheorem*{lemma*}{Lemma}
\newtheorem{lemma-in-thm}{Lemma}[thm]
\newtheorem{prop}[thm]{Proposition}
\newtheorem*{prop*}{Proposition}
\newtheorem{cor}[thm]{Corollary}
\newtheorem{claim}[thm]{Claim}
\newtheorem*{claim*}{Claim}
\newtheorem{proclaim}{Claim}[thm]
\newtheorem*{conjecture*}{Conjecture}
\theoremstyle{definition} 
\newtheorem{algorithm}[thm]{Algorithm}
\newtheorem*{constrinition*}{Construction-Definition}
\newtheorem{convention}[thm]{Convention}
\newtheorem*{convention*}{Convention}
\newtheorem{defn}[thm]{Definition}
\newtheorem*{defn*}{Definition}
\newtheorem*{definotation*}{Definition-Notation}
\newtheorem{example}[thm]{Example}
\newtheorem*{example*}{Example}
\newtheorem*{fact*}{Fact}
\newtheorem*{facts*}{Facts}
\newtheorem{notation}[thm]{Notation}
\newtheorem*{bold-note*}{Note}
\newtheorem{bold-question}[thm]{Question}
\newtheorem{rem}[thm]{Remark}
\newtheorem*{reminition*}{Remark-Definition}
\newtheorem{remexample*}{Remark-Example}
\newtheorem{remtation}[thm]{Remark-Notation}
\newtheorem*{remtation*}{Remark-Notation}
\newtheorem*{remuestion*}{Remark-Question}
\theoremstyle{remark}
\newtheorem*{rem*}{Remark}
\newtheorem*{note*}{Note}
\newtheorem*{notation*}{Notation}
\newtheorem*{question*}{Question}
\newtheorem{question}[thm]{Question}
\newtheorem*{questions*}{Questions}
\newcounter{UnorderedProofTempCtr}
\newcommand{\tempcommand}{}
\newcommand{\WP}{\ensuremath{{\bf W}\pp}}
\newcommand{\xdelta}{\bar X^\delta}
\newcommand{\hot}{\text{h.o.t.}}
\newcommand{\lot}{\text{l.o.t.}}
\newcommand{\ld}{\mathfrak{L}}
\newcommand{\dpsx}[2]{{#1} \langle \langle #2 \rangle \rangle }
\newcommand{\dpsxc}{\dpsx{\cc}{x}}
\def\noqed{\renewcommand{\qedsymbol}{}}
\newcommand{\quotequal}{\text{`='}}
\theoremstyle{definition} 
\newtheorem{assumption}[thm]{Assumption}
\begin{document}
\maketitle

\begin{abstract} 
Let $\pi:\tilde Y \to \cc\pp^2$ be a birational morphism of non-singular (rational) surfaces. We give an effective (necessary and sufficient) criterion for algebraicity of the surfaces resulting from contraction of the union of the strict transform of a line on $\cc\pp^2$ and all but one of the exceptional divisors of $\pi$. As a by-product we construct normal non-algebraic Moishezon surfaces with the `simplest possible' singularities, which in particular completes the answer to a remark of Grauert. Our criterion involves {\em global variants} of {\em key polynomials} introduced by MacLane. The geometric formulation of the criterion yields a correspondence between normal algebraic compactifications of $\cc^2$ with one irreducible curve at infinity and algebraic curves in $\cc^2$ with one {\em place} at infinity.
\end{abstract}

\section{Introduction} \label{sec-intro}

A (possibly reducible) curve $\tilde E$ on a non-singular algebraic surface $\tilde Y$ (defined over $\cc$) is called (analytically) {\em contractible} iff there is an analytic map $\tilde \pi: \tilde Y \to Y$ such that $\tilde \pi(\tilde E)$ is a collection of points and $\tilde \pi$ induces an isomorphism between $\tilde Y\setminus \tilde E$ and $Y \setminus \tilde \pi(\tilde E)$. A result of Grauert \cite{grauert} gives a complete characterization of curves which are contractible, namely: $\tilde E$ is contractible iff the matrix of intersection numbers of the irreducible components of $\tilde E$ is negative definite. In this article we consider (a special case of) the subsequent question: ``When is $\tilde E$ {\em algebraically contractible} (i.e.\ the surface $Y$ is also algebraic)?'' This question has been extensively studied, see e.g.\ \cite{artractability}, \cite{morrow-rossi}, \cite{brenton-algebraicity}, \cite{franco-lascu}, \cite{schroe-traction}, \cite{badescu-contractibility}, \cite{palka-Q1}. The ``strongest available numerical criterion for (algebraic) contractibility'' has been given by Artin \cite{artractability} and it states that $Y$ is algebraic (in fact, projective) if the singularities of $Y$ at the points in $\tilde \pi(\tilde E)$ are {\em rational}. However, it is well known, and was observed in \cite{artractability} that ``in general there are no numerical criteria equivalent with (algebraic) contractibility of a given curve.'' In this article we consider the {\em simplest} set up (see Remark \ref{simplest-remark}) for which algebraic contractibility does not readily follow from existing results, and give an {\em effective} (non-numerical) criterion which is {\em equivalent} to algebraic contractibility. \\

The only previously known {\em effective} criterion for algebraicity is the criterion of \cite{artractability}. It has been used extensively in the literature and a part of its usefulness derives from the fact that it is explicitly computable. However, it is only a {\em sufficient} condition and in general is {\em not} equivalent to algebraic contractibility. \cite[Theorem 3.4]{schroe-traction} gives a condition which is applicable in general and is {\em equivalent} to algebraic contractibility, but it is {\em not} effective. A similar statement is true for \cite[Corollary 2.6]{palka-Q1}. To the best of our knowledge our criterion is the first which is {\em both} effective and equivalent (in the situations where it applies) to algebraic contractibility. As a by product we get a new class of non-algebraic normal {\em Moishezon surfaces} (i.e.\ analytic surfaces for which the fields of meromorphic functions have transcendence degree 2 over $\cc$), including some which have possibly the `simplest possible' singularities (see Remark \ref{simplest-sing}). The first examples of non-algebraic normal Moishezon surfaces were constructed by Grauert in \cite[Section 4.8, Example d]{grauert} by contracting curves with genus $\geq 2$ and he remarked there that it was unknown to him if it is possible to construct non-algebraic surfaces by contracting even a torus. While an example of Nagata \cite[Example 3.1]{badescu} shows that it is indeed possible with a torus, our effective criterion gives (as far as we know) the first construction of non-algebraic surfaces by contracting (trees of) {\em rational} curves.\\

Finally, our effective criterion (Theorem \ref{key-thm-effective}) is stated in terms of {\em key polynomials} introduced by  MacLane \cite{maclane-key} (the `effectiveness' of the criterion stems from the fact that the construction of key polynomials are effective - see Remark \ref{essential-remark}). The key polynomials were introduced (and are used) to study valuations in a {\em local} setting. However, our criterion shows how they retain information about the {\em global geometry} when computed in `global coordinates'. The geometric analogue of our criterion establishes a new correspondence between (normal) algebraic compactifications of $\cc^2$ with one (irreducible) curve at infinity and algebraic curves in $\cc^2$ with one place at infinity. 

\subsection{The question} \label{sec-question}
Let $\pi: \tilde Y \to \pp^2$ be a birational morphism of non-singular surfaces (defined over $\cc$), $L \subseteq \pp^2$ be a line, and $E$ be the {\em exceptional divisor} of $\pi$ (i.e.\ $E$ is the union of curves in $\tilde Y$ which map to points in $\pp^2$). Let $E^*$ be an irreducible component of $E$ and $\tilde E$ be the union of the strict transform $\tilde L$ (on $\tilde Y$) of $L$ and all components of $E$ {\em excluding} $E^*$.

\begin{question} \label{contract-question}
Assume $\tilde E$ is (analytically) contractible. When is $\tilde E$ {\em algebraically} contractible?
\end{question}

\begin{rem} \label{simplest-remark}
In the set up of Question \ref{contract-question}, the answer is {\em always} affirmative if we replace $\tilde E$ by $\tilde E \setminus \tilde L$. More precisely, if $E'$ is any collection of curves which is contained in $E$, then $E'$ is algebraically contractible. Indeed, since $E$ is contractible by construction, it follows that $E'$ is contractible by Grauert's criterion \cite{grauert} (mentioned in the beginning of the introduction); let $\pi' : \tilde Y \to Y'$ be the contraction of $E'$. Then the birational morphism $Y' \to \pp^2$ induced by $\pi$ extends to a regular morphism. It follows that the singularities of $Y'$ are {\em sandwiched} (since $Y'$ is `sandwiched' between non-singular surfaces $\tilde Y$ and $\pp^2$). Since sandwiched singularities are rational \cite[Proposition 1.2]{lipman}, Artin's criterion \cite{artractability} implies that $Y'$ is projective.
\end{rem}

We already noted that in general there are no {\em numerical} criteria (i.e.\ criteria determined by numerical invariants of the singularities) which are equivalent to algebraic contractibility. The following example shows that this is already true in the set up of Question \ref{contract-question}. 

\begin{example}\label{non-example}
Let $(u,v)$ be a system of `affine' coordinates near a point $O \in \pp^2$ (`affine' means that both $u=0$ and $v=0$ are lines on $\pp^2$) and $L$ be the line $\{u=0\}$. Let $C_1$ and $C_2$ be curve-germs at $O$ defined respectively by $f_1 := v^5 - u^3$ and $f_2 := (v-u^2)^5 - u^3$. For each $i$, Let $\tilde Y_i$ be the surface constructed by resolving the singularity of $C_i$ at $O$ and then blowing up $8$ more times the point of intersection of the (successive) strict transform of $C_i$ with the exceptional divisor. Let $E^*_i$ be the {\em last} exceptional curve, and $\tilde E^{(i)}$ be the union of the strict transform $\tilde L_i$ (on $\tilde Y_i$) of $L$ and (the strict transforms of) all exceptional curves except $E^*_i$. \\

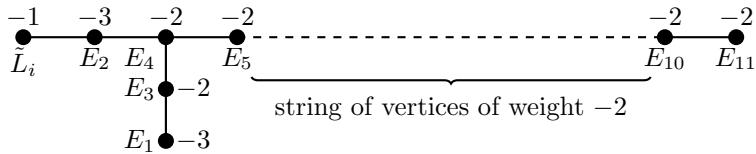
\begin{figure}[htp]
\begin{center}

\begin{tikzpicture}[scale=1, font = \small] 

	\pgfmathsetmacro\factor{1.25}
 	\pgfmathsetmacro\dashedge{4.5*\factor}	
 	\pgfmathsetmacro\edge{.75*\factor}
 	\pgfmathsetmacro\vedge{.55*\factor}

 	\draw[thick] (-2*\edge,0) -- (\edge,0);
 	\draw[thick] (0,0) -- (0,-2*\vedge);
 	\draw[thick, dashed] (\edge,0) -- (\edge + \dashedge,0);
 	\draw[thick] (\edge + \dashedge,0) -- (2*\edge + \dashedge,0);
 	
 	\fill[black] ( - 2*\edge, 0) circle (3pt);
 	\fill[black] (-\edge, 0) circle (3pt);
 	\fill[black] (0, 0) circle (3pt);
 	\fill[black] (0, -\vedge) circle (3pt);
 	\fill[black] (0, - 2*\vedge) circle (3pt);
 	\fill[black] (\edge, 0) circle (3pt);
 	\fill[black] (\edge + \dashedge, 0) circle (3pt);
 	\fill[black] (2*\edge + \dashedge, 0) circle (3pt);
 	
 	\draw (-2*\edge,0)  node (e0up) [above] {$-1$};
 	\draw (-2*\edge,0 )  node (e0down) [below] {$\tilde L_i$};
 	\draw (-\edge,0 )  node (e2up) [above] {$-3$};
 	\draw (-\edge,0 )  node [below] {$E_2$};
 	\draw (0,0 )  node (e4up) [above] {$-2$};	
 	\draw (0,0 )  node [below left] {$E_4$};
 	\draw (0,-\vedge )  node (down1) [right] {$-2$};
 	\draw (0,-\vedge )  node [left] {$E_3$};
 	\draw (0, -2*\vedge)  node (down2) [right] {$-3$};
 	\draw (0,-2*\vedge )  node [left] {$E_1$};
 	\draw (\edge,0)  node (e+1-up) [above] {$-2$};
 	\draw (\edge,0)  node [below] {$E_5$};
 	\draw (\edge + \dashedge,0)  node (e-last-1-up) [above] {$-2$};
 	\draw (\edge + \dashedge,0)  node [below] {$E_{10}$};
 	\draw (2*\edge + \dashedge,0)  node (e-last-up) [above] {$-2$};
 	\draw (2*\edge + \dashedge,0)  node [below] {$E_{11}$};
 	
 	\pgfmathsetmacro\factorr{.2}
 	\draw [thick, decoration={brace, mirror, raise=15pt},decorate] (\edge + \edge*\factorr,0) -- (\edge + \dashedge - \edge*\factorr,0);
 	\draw (\edge + 0.5*\dashedge,-\edge) node [text width= 5cm, align = center] (extranodes) {string of vertices of weight $-2$};
 			 	
\end{tikzpicture}
\caption{Weighted dual graph of $\tilde E^{(i)}$}\label{fig:non-example}
\end{center}
\end{figure}

Note that the the pair of germs $(C_1,L)$ and $(C_2, L)$ are {\em isomorphic} via the map $(u,v) \mapsto (u,v+u^2)$. It follows that the $\tilde E^{(i)}$'s have {\em identical} `weighted dual graphs' (the {\em dual graph} of a curve is a graph such that the vertices correspond to the irreducible components of the curve and two vertices are connected by an edge iff corresponding curves intersect; in the {\em weighted dual graph} the weight of every vertex is the self-intersection number of the corresponding curve); in particular, both $\tilde E^{(i)}$'s are analytically contractible. Figure \ref{fig:non-example} depicts the weighted dual graph (we labelled the vertices according to the order of appearance of the corresponding curves in the sequence of blow-ups). Since $\tilde E^{(i)}$ is connected, contraction of $\tilde E^{(i)}$ produces an analytic surface $Y_i$ with a unique singular point $P_i$. It can be computed that each $P_i$ has multiplicity $2$, geometric genus $1$ and the singularity at $P_i$ is {\em almost rational} in the sense of \cite{nemethi}. However, it turns out that $Y_1$ is algebraic, but $Y_2$ is {\em not} (see Example \ref{non-example-again}).
\end{example}

\begin{rem} \label{simplest-sing}
It can be shown that the singularities at $P_i$ (of Example \ref{non-example}) are in fact {\em hypersurface singularities} which are {\em Gorenstein} and {\em minimally elliptic} (in the sense of \cite{laufer-elliptic}). Minimally elliptic Gorenstein singularities have been extensively studied, and in a sense they form the simplest class of non-rational singularities. Since having only rational singularities imply algebraicity of the surface \cite{artractability}, it follows that the non-algebraic surface $Y_2$ of Example \ref{non-example} is a normal non-algebraic Moishezon surface with the `simplest possible' singularity. 
\end{rem} 

\begin{rem}
Remark \ref{simplest-remark} implies that in the set up of Question \ref{contract-question} deleting the vertex corresponding to $\tilde L$ from the weighted dual graph of $\tilde E$ turns it into the weighted dual graph of the resolution of a {\em sandwiched} singularity. In this sense the singularities resulting from contraction of $\tilde E$ are {\em almost sandwiched}.
\end{rem}

\subsection{The answers}

\subsubsection{Geometric ({\em non-effective}) answer}
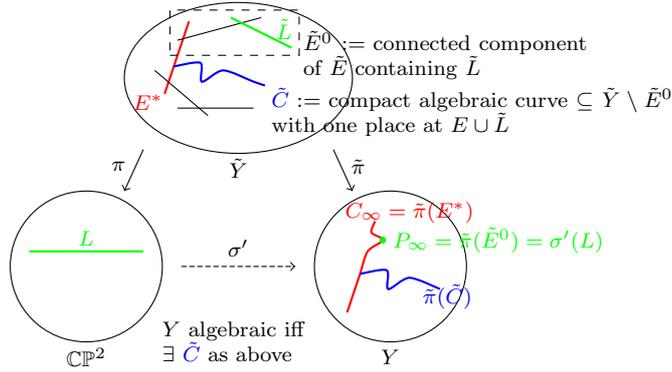
\begin{figure}[htp]
		\begin{center}
			\begin{tikzpicture}[font = \scriptsize]
			\pgfmathsetmacro\scalefactor{1}
			\pgfmathsetmacro\centerdist{2*\scalefactor}

			\pgfmathsetmacro\distop{2.5*\scalefactor}
			\pgfmathsetmacro\arrowbelowdist{0.75*\scalefactor}

			\pgfmathsetmacro\arrowdiagtopydist{1.55*\scalefactor}
			\pgfmathsetmacro\arrowdiagtopxdist{1.25*\scalefactor}

			\pgfmathsetmacro\arrowdiagbottomydist{1*\scalefactor}
			\pgfmathsetmacro\arrowdiagbottomxdist{1.5*\scalefactor}

			\begin{scope}[shift={(\centerdist,0)}, scale=\scalefactor]

				\draw[] (0,0) circle (1cm);
				
				\draw [red, thick] plot [smooth] coordinates {(-0.2,0.6) (-0.25,0.45) (-0.1,0.35) (-0.3,0.2) (-0.4,-0.1) (-0.45, -0.25) (-0.57,-0.61)};
				
				\fill [green] (-0.1,0.35) circle (1.25pt);
				\draw [green] (-0.1,0.35) node[right] {$P_\infty = \tilde \pi(\tilde E^0) = \sigma'(L)$};
				
				\begin{scope}[shift={(0.45,-0.25)}]
					\draw [blue,thick] plot [smooth] coordinates {(-0.85, 0.15) (-0.5,0.2) (-0.48,-0.05)  (-0.2,0.1) (0.2, -0.05)};
					\draw (0.3,-0.15) node [blue]{$\tilde \pi(\tilde C)$};
				\end{scope}

				\draw [red] (0.25,0.72) node {$C_\infty = \tilde \pi(E^*)$};
				\draw (0,-1.2) node {$Y$};
			\end{scope}

			\begin{scope}[shift={(-\centerdist,0)}, scale=\scalefactor]

				\draw[] (0,0) circle (1cm);
				\draw [green,thick] (-0.75, 0.2) -- (0.75, 0.2);


				\draw [green] (0,0.375) node {$L$};
				\draw (0, -1.2) node {$\cc\pp^2$};
			\end{scope}

			\begin{scope}[shift={(0, \distop)}, scale=\scalefactor]

				\draw[] (0,0) ellipse (1.5cm and 1cm);
				\begin{scope}[shift= {(-0.4, 0.4)}] 
					\draw [red, thick] plot [smooth] coordinates {(-0.25,0.35) (-0.4,-0.1) (-0.45, -0.25) (-0.57,-0.61)};
					\draw (-0.79,-0.72) node[red] {$E^*$};
				\end{scope}

				\draw [] (-0.8, 0.5) -- (0.3,0.8);
				\draw [green, thick] (-0.1, 0.8) -- (0.7, 0.4);
				\draw (0.6,0.65) node [green]{$\tilde L$};
				\draw [dashed] (-0.9,0.3) rectangle (0.8, 0.9);
				\draw (0.72,0.3) node [right, text width = 4cm] {$\tilde E^0:=$ connected component of $\tilde E$ containing $\tilde L$}; 
				\draw [blue,thick] plot [smooth] coordinates {(-0.85, 0.15) (-0.5,0.2) (-0.48,-0.05)  (-0.2,0.1) (0.2, -0.05) (0.35,-0.1)};
				\draw (0.3,-0.45) node[right, text width = 5.5cm]{\textcolor{blue}{$\tilde C$} $:=$ compact algebraic curve $\subseteq \tilde Y \setminus \tilde E^0$ with one place at $E \cup \tilde L$};

				\draw [] (-1.1, 0.1) -- (-0.4,-0.5);
				\draw [] (-0.8, -0.4) -- (0.2, -0.4);
				\draw (0, -1.2) node {$\tilde Y$}; 
			\end{scope}

			\draw[->] (-\arrowdiagtopxdist,\arrowdiagtopydist) -- (-\arrowdiagbottomxdist, \arrowdiagbottomydist) node [left, pos=0.4] {$\pi$};

			\draw[->] (\arrowdiagtopxdist,\arrowdiagtopydist) -- (\arrowdiagbottomxdist, \arrowdiagbottomydist) node [right, pos=0.4] {$\tilde \pi$};
			\draw[dash pattern=on 2pt off 1pt,->] (-\arrowbelowdist, 0) -- (\arrowbelowdist,0) node [above, pos=0.5]{$\sigma'$};

			\draw (0, -1) node[text width = 2cm]{$Y$ algebraic iff $\exists$ \textcolor{blue}{$\tilde C$} as above};
			
			\end{tikzpicture}
			\caption{Geometric answer to Question \ref{contract-question}} \label{fig:geom-answer}
		\end{center}
\end{figure} 

\begin{thm}[Geometric answer to Question \ref{contract-question}] \label{key-thm-geometric}
In the set up of Question \ref{contract-question}, the following are equivalent:
\begin{enumerate}
\item \label{algebraic-assertion} $\tilde E$ is algebraically contractible.
\item \label{existence-assertion-1} there is a (compact) algebraic curve $\tilde C$ on $\tilde Y$ such that $\tilde C$ does not intersect $\tilde E$.
\item \label{existence-assertion-2} there is a (compact) algebraic curve $\tilde C$ on $\tilde Y$ such that $\tilde C$ does not intersect $\tilde E^0$, where $\tilde E^0$ is the connected component of $\tilde E$ that contains $\tilde L$.
\item \label{existence-assertion-one-place} there is a (compact) algebraic curve $\tilde C$ on $\tilde Y$ such that $\tilde C$ does not intersect $\tilde E^0$ and $\tilde C$ has {\em only one place at} $E \cup \tilde L$ (i.e.\ $\tilde C$ intersects $E \cup \tilde L$ only at one point $P$ and $\tilde C$ is analytically irreducible at $P$).
\end{enumerate}
\end{thm}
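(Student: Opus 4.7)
The implications $(2) \Rightarrow (3)$ and $(4) \Rightarrow (3)$ are immediate, since $\tilde E^0 \subseteq \tilde E$ and, in $(4)$, the condition of having only one place at $E \cup \tilde L$ is additional. I will establish $(1) \Rightarrow (2)$, $(3) \Rightarrow (1)$, and $(1) \Rightarrow (4)$, which closes the equivalence.

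For $(1) \Rightarrow (2)$: if $Y$ is algebraic, then $Y$ is projective, because it is a normal complete surface with only isolated singularities. Fix a very ample line bundle $\scrL$ on $Y$; by Bertini a general member $D \in |\scrL|$ avoids the finite singular set $\tilde\pi(\tilde E)$. Since $D$ misses the points that $\tilde\pi$ blows up, its total transform in $\tilde Y$ coincides with its strict transform $\tilde C$, and $\tilde C$ is then disjoint from $\tilde E$.

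For $(3) \Rightarrow (1)$ (the main step): I would factor $\tilde\pi : \tilde Y \to Y$ as $\tilde Y \to Y^0 \to Y$, where the first map contracts only $\tilde E^0$; this factorization is legitimate by Grauert's criterion, since $\tilde E^0$ is a union of connected components of $\tilde E$ and so inherits the negative-definiteness of its intersection form. The second contraction $Y^0 \to Y$ removes components lying entirely in the exceptional locus $E$ of $\pi$; by the argument of Remark~\ref{simplest-remark}, the induced singularities on $Y$ are sandwiched, hence rational, so Artin's criterion \cite{artractability} reduces the projectivity of $Y$ to that of $Y^0$. On $Y^0$ the image $C^0$ of $\tilde C$ is a compact algebraic curve avoiding the unique ``bad'' singular point $p^0 := \tilde\pi^0(\tilde E^0)$, so $C^0$ is Cartier with $(C^0)^2 = \tilde C^2$ by the projection formula. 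I would then apply a Nakai--Moishezon-type projectivity criterion on the normal Moishezon surface $Y^0$: by combining $C^0$ with sufficiently positive divisors from the smooth locus $Y^0 \setminus \{p^0\}$ (and passing to a suitable multiple), one produces a Cartier divisor of positive self-intersection meeting every curve positively, giving projectivity.

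For $(1) \Rightarrow (4)$: note that $\tilde \pi$ restricts to an isomorphism $\tilde Y \setminus (\tilde L \cup E) \cong \pp^2 \setminus L = \cc^2$, and hence $Y \setminus C_\infty \cong \cc^2$, where $C_\infty := \tilde\pi(E^*)$. Given that $Y$ is projective, a very ample line bundle on $Y$ supplies global sections whose restrictions to $\cc^2$ are polynomials. A generic such polynomial is irreducible, and by a Bertini-type argument in a high-dimensional linear subsystem one can arrange that its projective closure in $Y$ meets $C_\infty$ at a single smooth point with a single local branch. Pulling this curve back via $\tilde\pi$ gives an algebraic curve on $\tilde Y$ that is disjoint from $\tilde E^0$ and has one place at $E \cup \tilde L$.

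The main obstacle will be the projectivity of $Y^0$ in step $(3) \Rightarrow (1)$: normal Moishezon surfaces are not automatically projective (this is precisely what makes Example~\ref{non-example} interesting), so one cannot invoke Moishezon's theorem directly, and one has to use the specific geometry of the curve $\tilde C$ to manufacture sufficient positivity for a Nakai--Moishezon-style conclusion. A secondary subtlety is extracting the ``one place at infinity'' refinement in $(4)$, which depends on the irreducibility of $C_\infty$ and on a careful transversality argument in the chosen linear system on $Y$.
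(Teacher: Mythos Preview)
Your handling of $(1)\Rightarrow(2)$ and the trivial implications is fine and matches the paper. The two substantive steps, however, both have genuine gaps.

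\medskip
\textbf{On $(3)\Rightarrow(1)$.} Your factorization $\tilde Y\to Y^0\to Y$ puts the hard contraction (the one involving $\tilde L$) first, and then you are left having to prove projectivity of the analytic surface $Y^0$ armed only with a curve $C^0$ missing the singular point. Your proposed Nakai--Moishezon argument is not an argument: you have no control over $(C^0)^2$, no source of ``sufficiently positive divisors from the smooth locus'', and no mechanism for ensuring positive intersection with every curve. This is exactly the content of the theorem you are trying to prove, so the sketch is circular in spirit. The paper avoids this by factoring the other way: first contract $\tilde E\setminus\tilde L$ (which lies entirely in the exceptional locus of $\pi$, hence gives a projective $\bar X'$ with rational singularities by Artin), and only then deal with the line. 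On $\bar X'$ one has two irreducible boundary curves $L'$ and $E'^*$, both $\qq$-Cartier, and if $\tilde C\cap X=V(f)$ then $C'\sim_{\qq}\deg(f)\,L'+\delta(f)\,E'^*$ with both coefficients positive. Since $C'\cdot L'=0$ (because $\tilde C\cap\tilde E^0=\emptyset$), Zariski--Fujita makes $|mC'|$ basepoint-free, and the associated morphism contracts precisely $L'$, realising $Y$ as a projective variety. The key idea you are missing is this explicit linear equivalence on an already-projective intermediate model.

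\medskip
\textbf{On $(1)\Rightarrow(4)$.} A Bertini-type argument will not produce a curve with one place at $E\cup\tilde L$: a general member of a very ample system on $Y$ meets $C_\infty$ in $\deg(C_\infty)$ distinct points, not one, and imposing high-order tangency at a single point is not a generic condition you can arrange by wiggling in a linear system. The paper obtains $(4)$ as a byproduct of the effective criterion: once all key forms are polynomials (equivalent to $(1)$ by Theorem~\ref{key-thm-effective}), the last key form $g_{n+1}\in\cc[x,y]$ defines a curve whose unique degree-wise Puiseux root is congruent to $\phi_{l+1}$ modulo the required exponent, so $V(g_{n+1})$ has exactly one place at infinity and that place sits on $E^*\setminus\tilde E^0$. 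In other words, $(1)\Rightarrow(4)$ is not an elementary geometric fact but the geometric shadow of the algebraic machinery of key polynomials; you should not expect to get it by Bertini alone.
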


\begin{rem}
The equivalence of Assertions \ref{algebraic-assertion}, \ref{existence-assertion-1} and \ref{existence-assertion-2} of Theorem \ref{key-thm-geometric} is not hard to see (see the proof of Theorem \ref{key-thm-geometric} in Section \ref{sec-proof}). The hardest part is to establish the implication \eqref{algebraic-assertion} $\im$ \eqref{existence-assertion-one-place}, and it is essentially equivalent to our `effective criterion' described below.
\end{rem}

\subsubsection{Effective answer}
Our {\em effective criterion} is given in terms of {\em key polynomials} of valuations introduced in \cite{maclane-key}. More precisely, in the set up of Question \ref{contract-question}, choose an affine system of coordinates $(u,v)$ near $O := \pi(E^*)$ such that $L = \{u=0\}$. Then $E^*$ induces a {\em discrete valuation} $\nu$ on $\cc(u,v)$: the value of $\nu$ on a rational function $g \in \cc(u,v)$ is the order of vanishing of $g\circ\pi$ along $E^*$. Then $\nu$ can be described by a finite sequence of {\em key polynomials} $\tilde g_0, \ldots, \tilde g_{n+1} \in \cc[u,v]$, $n \geq 0$, starting with $\tilde g_0 := u$ and $\tilde g_1 := v$ (see Definition \ref{key-pol-defn}). Let $(x,y) := (1/u,v/u)$, i.e.\ $(x,y)$ is a system of coordinates for $\pp^2\setminus L \cong \cc^2$. Define $g_0 := x$ and for each $k$, $1 \leq k \leq n+1$, let 
\begin{align}
g_k(x,y) := x^{\deg_{v}(\tilde g_k)}\tilde g_k(1/x,y/x) \in \cc[x,x^{-1},y] \label{form-from-pol-0}
\end{align}
We call $g_k$'s the {\em key forms} of the {\em semidegree} $\delta := -\nu$\footnote{Valuations $\nu$ on $\cc(x,y)$ which are {\em centered at infinity} with respect to $\cc^2$ constitute a central topic of this article. While dealing with such $\nu$, we always work with $\delta := -\nu$ (which we call a {\em semidegree}) instead of $\nu$, since for polynomials $f \in \cc[x,y]$, $\delta(f)$ has a more `natural' meaning than $\nu(f)$, in the same sense that degree is a `more natural' function on polynomials than negative degree. More generally, semidegrees are special types of {\em degree-like functions} \cite{sub1} which correspond to compactifications of affine varieties.}. 

\begin{thm}[Effective answer to Question \ref{contract-question}] \label{key-thm-effective}

In the set up of Question \ref{contract-question}, the following are equivalent:
\begin{enumerate}
\item \label{algebraic-assertion-again} $\tilde E$ is algebraically contractible.
\item \label{key-pol-assertion-1} $g_k \in \cc[x,y]$ for all $k$, $0 \leq k \leq n+1$.
\item \label{key-pol-assertion-2} $g_{n+1} \in \cc[x,y]$.
\end{enumerate}
Moreover, if $\tilde E$ is algebraically contractible, then $\tilde C := V(g_{n+1}) \subseteq \tilde Y$ satisfies the property of Assertion \ref{existence-assertion-one-place} of Theorem \ref{key-thm-geometric}.
\end{thm}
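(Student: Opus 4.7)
The plan is to deduce Theorem \ref{key-thm-effective} from Theorem \ref{key-thm-geometric} together with the defining properties of MacLane's key polynomials. The implication (\ref{key-pol-assertion-1}) $\Rightarrow$ (\ref{key-pol-assertion-2}) is trivial, so it suffices to prove (\ref{key-pol-assertion-2}) $\Rightarrow$ (\ref{algebraic-assertion-again}) (which will simultaneously yield the ``moreover'' clause) and (\ref{algebraic-assertion-again}) $\Rightarrow$ (\ref{key-pol-assertion-1}).

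For (\ref{key-pol-assertion-2}) $\Rightarrow$ (\ref{algebraic-assertion-again}), I would verify that the closure $\tilde C$ in $\tilde Y$ of the affine curve $V(g_{n+1})\subseteq\cc^2_{(x,y)}=\pp^2\setminus L$ satisfies Assertion (\ref{existence-assertion-one-place}) of Theorem \ref{key-thm-geometric}. Two ingredients are needed. First, the defining property of MacLane's sequence: the last key polynomial $\tilde g_{n+1}$ is irreducible in $\cc((u))[v]$, hence analytically irreducible at $O$, and its unique branch at $O$ realises $\nu$ in the sense that its strict transform on $\tilde Y$ meets $E^*$ transversally at a single point $P$ while avoiding $\tilde L$ and every other exceptional component. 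Second, the monicity of $\tilde g_{n+1}$ in $v$ together with the fact that $\nu$ is centred at $O$ forces $\tilde g_{n+1}(0,v)=v^{\deg_v\tilde g_{n+1}}$, so the projective closure of $V(\tilde g_{n+1})$ in $\pp^2$ meets $L$ only at $O$; the hypothesis $g_{n+1}\in\cc[x,y]$ then rules out additional components arising along the line at infinity for the $(u,v)$-chart. Combining these, $\tilde C$ intersects $E\cup\tilde L$ in the single point $P\in E^*\setminus\tilde E^0$, is analytically irreducible there, and is therefore disjoint from $\tilde E^0$ and has one place at $E\cup\tilde L$, as required.

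For (\ref{algebraic-assertion-again}) $\Rightarrow$ (\ref{key-pol-assertion-1}), I would run the above in reverse, starting from a curve $\tilde C$ furnished by Assertion (\ref{existence-assertion-one-place}). Since $\tilde C\cap\tilde L=\emptyset$, the image $\pi(\tilde C)\subseteq\pp^2$ meets $L$ at most at $O$, so $\tilde C\cap\cc^2_{(x,y)}$ is cut out by a genuine polynomial $f\in\cc[x,y]$. Rewriting $f$ in $(u,v)$-coordinates and clearing the relevant power of $u$ produces $\tilde f\in\cc[u,v]$, monic in $v$, whose unique analytic branch at $O$ realises the divisorial valuation $\nu$. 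By the uniqueness part of MacLane's construction, $\tilde f$ is equivalent to $\tilde g_{n+1}$ under the standard MacLane equivalence, and reading this equivalence back in $(x,y)$-coordinates yields $g_{n+1}\in\cc[x,y]$. Applying the same reasoning to the partial contractions of $\tilde Y$ that correspond to the truncations $\tilde g_0,\ldots,\tilde g_k$ of the key sequence then gives $g_k\in\cc[x,y]$ for every $k$.

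The main obstacle will be pinning down the dictionary between the geometric branch data of $\tilde C$ and MacLane's inductive construction via augmented valuations --- in particular, verifying the equivalence between polynomiality of $g_{n+1}$ in $(x,y)$ and the statement that the projective closure of $V(\tilde g_{n+1})$ in $\pp^2$ meets $L$ only at $O$. This is most cleanly expressed through the semidegree $\delta=-\nu$ and the degree-like function framework indicated in the footnote, under which $g_k\in\cc[x,y]$ becomes the assertion that $\delta$ is dominated by the standard $(x,y)$-degree on the monomials produced by the MacLane construction; rephrased in this language, both directions of the argument reduce to comparing two filtrations on $\cc[x,y]$ determined by the same data.
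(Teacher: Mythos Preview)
Your plan for the implication \eqref{key-pol-assertion-2} $\Rightarrow$ \eqref{algebraic-assertion-again} (and the ``moreover'' clause) is essentially the paper's argument: once $g_{n+1}\in\cc[x,y]$, the properties of the last key polynomial force $V(g_{n+1})$ to have a single branch at infinity compatible with $\tilde\phi_\delta$, and then Assertion \eqref{existence-assertion-one-place} $\Rightarrow$ \eqref{algebraic-assertion} of Theorem \ref{key-thm-geometric} (proved there via Zariski--Fujita, independently of Theorem \ref{key-thm-effective}) closes the loop.

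The implication \eqref{algebraic-assertion-again} $\Rightarrow$ \eqref{key-pol-assertion-1} is where your plan has a genuine gap. Two issues:

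\textbf{(a) Circularity.} You start from a curve $\tilde C$ furnished by Assertion \eqref{existence-assertion-one-place} of Theorem \ref{key-thm-geometric}. But in the paper the implication \eqref{algebraic-assertion} $\Rightarrow$ \eqref{existence-assertion-one-place} is deduced \emph{from} the ``moreover'' clause of Theorem \ref{key-thm-effective}. What is available independently is only Assertion \eqref{existence-assertion-2}: a curve $\tilde C$ missing $\tilde E^0$, possibly with many branches along $E^*$.

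\textbf{(b) The ``MacLane equivalence'' step does not do what you need.} Even granting a curve $\tilde C$ with one place at $E\cup\tilde L$, its defining polynomial $f\in\cc[x,y]$ has no reason to be $g_{n+1}$, nor ``equivalent'' to it in any sense that transfers polynomiality. The $y$-degree of $f$ can be any multiple of $\deg_y(g_{n+1})$, and the uniqueness of key polynomials says only that the MacLane sequence $\tilde g_0,\ldots,\tilde g_{n+1}$ attached to $\nu$ is unique --- it does \emph{not} say that every polynomial whose branch realises $\nu$ coincides with $\tilde g_{n+1}$. What must actually be shown is: if some $g_k\notin\cc[x,y]$, then \emph{no} polynomial $f\in\cc[x,y]$ can have every branch at infinity of the form \eqref{dpuiseux-param}. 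This is the content of Assertion \ref{hard-assertion} of Proposition \ref{key-prop}, and it is the technical heart of the paper. Its proof (Section \ref{key-proof-subsection}) lifts $f$ to the auxiliary ring $\tilde B_l$ via Lemma \ref{limit-of-agreement}, isolates the offending monomial with a negative $x$-exponent, and then uses the Gr\"obner-basis description of the graded ring $\cc[x,y]^{\delta_l}$ (Lemma \ref{poly-positive-prop}) to derive a contradiction. Your sketch (``compare two filtrations on $\cc[x,y]$'') gestures at the right kind of object but supplies none of this mechanism, and the suggestion to ``apply the same reasoning to the partial contractions'' for the intermediate $g_k$ does not address the difficulty either.
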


\begin{rem} \label{essential-remark}
The `effectiveness' of Theorem \ref{key-thm-effective} stems from the fact that the key forms can be explicitly computed in terms of the input data of Question \ref{contract-question}. More precisely, in Section \ref{subsec-encoding} we reformulate Question \ref{contract-question} in terms of a curve-germ $C$ at $O$ and an integer $r$ (Question \ref{contract-2-question}), and we make the following 
\begin{assumption} \label{essential-assumption}
The {\em Puiseux expansion} $v = \phi(u)$ of $C$ at $O$ is known, where $(u,v)$ and $O$ are as in the paragraph preceding Theorem \ref{key-thm-effective}. 
\end{assumption}
An algorithm to compute the key forms from $\phi(u)$ is given in Remarks \ref{essential-example-remark} and \ref{essentially-all}. Note that
\begin{enumerate}
\item We need to know only finitely many terms of $\phi(u)$ to compute the key forms. More precisely, if $p$ is the smallest integer such that $\phi(u) \in \cc[[u^{1/p}]]$ and $q/p$ is the {\em last characteristic exponent} (Definition \ref{puiseuxnition}) of $\phi$, then it suffices to know the first $(q+r-1)$ terms of $\phi$ (see Remark \ref{essential-example-remark} and Algorithm \ref{essential-algorithm}). 
\item It is possible to compute key forms directly from the equation of $C$ in $(u,v)$-coordinates via a modification of Abhyankar's algorithm to determine irreducibility of power series \cite{abhyankar-irreducibility}. 
\end{enumerate}
\end{rem}

\begin{rem} \label{no-claim-remark}
Our algorithm to construct key forms (Algorithm \ref{essential-algorithm}) follows from the standard theory of key polynomials and Puiseux series corresponding to valuations (e.g.\ the results from \cite[Chapters 2 and 4]{favsson-tree}). Therefore we omit the proof (based on a long induction) of the correctness of Algorithm \ref{essential-algorithm} and its immediate corollaries contained in Proposition \ref{essential-prop}. However, we give a detailed example (Example \ref{essential-example}) which (we hope!) makes it clear how the algorithm works. 
\end{rem}

\subsubsection{Answer in terms of {\em valuative tree}}
The final version of our answer to Question \ref{contract-question} is an (immediate) translation of Theorem \ref{key-thm-geometric} in the terminology of the {\em valuative tree} introduced in \cite{favsson-tree}. Consider the set up of Question \ref{contract-question}. Let $X := \pp^2 \setminus L \cong \cc^2$ and, as in Theorem \ref{key-thm-effective}, let $(x,y)$ be a system of coordinates on $X$ and $\nu$ be the valuation on $\cc(x,y)$ corresponding to $E^*$. Let $\scrV_0$ be the space of all valuations $\mu$ on $\cc[x,y]$ such that $\min\{\mu(x),\mu(y)\} = -1$. It turns out that $\scrV_0$ has the structure of a tree with root at $-\deg_{(x,y)}$, where $\deg_{(x,y)}$ is the usual degree in $(x,y)$-coordinates; $\scrV_0$ is called the {\em valuative tree at infinity} on $X$  \cite[Section 7.1]{favsson-eigen}. Let $\tilde \nu := \nu/\max\{-\nu(x),-\nu(y)\}$ be the `normalized' image of $\nu$ in $\scrV_0$. 

\begin{cor} \label{valuative-global}
$\tilde E$ is algebraically contractible iff there is a tangent vector $\tau$ of $\tilde \nu$ on $\scrV_0$ such that 
\begin{compactenum}
\item $\tau$ is {\em not} represented by $-\deg$, and
\item $\tau$ is represented by a curve valuation corresponding to an algebraic curve with one place at infinity.
\end{compactenum}
\end{cor}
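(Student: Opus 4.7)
The plan is to translate Theorem~\ref{key-thm-geometric}\eqref{algebraic-assertion} $\iff$ \eqref{existence-assertion-one-place} into the language of the valuative tree using the standard dictionary from \cite{favsson-tree, favsson-eigen}. Two ingredients are needed. First, at a divisorial point $w \in \scrV_0$ corresponding to a prime divisor $D$ on some proper model of $\pp^2$, the tangent vectors at $w$ are in canonical bijection with the closed points of $D$: to a valuation $\mu \neq w$ one associates the tangent vector indexed by the center of $\mu$ on $D$. Second, algebraic curves $C \subseteq \cc^2$ with one place at infinity correspond, via their normalized curve valuations, to a distinguished set of ends of $\scrV_0$, and via taking strict transforms on $\tilde Y$ they correspond bijectively to algebraic curves $\tilde C \subseteq \tilde Y$ with one place at $E \cup \tilde L$.

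The crucial identification is that of the tangent vector $\tau_0$ at $\tilde\nu$ represented by $-\deg$: I claim $\tau_0 \leftrightarrow p_0 := E^* \cap \tilde E^0 \in E^*$ under the first bijection. Indeed, since $\tilde Y$ is rational, the dual graph of $E \cup \tilde L$ is a tree, and the geodesic $[-\deg, \tilde\nu] \subset \scrV_0$ traverses the divisorial valuations attached to the vertices on the unique path from $\tilde L$ to $E^*$ in this tree. All vertices on this path except $E^*$ itself lie in $\tilde E^0$, and the vertex adjacent to $E^*$ meets $E^*$ at the single point $p_0$. Therefore any divisorial valuation on the open segment $(-\deg,\tilde\nu)$ sufficiently close to $\tilde\nu$ has center $p_0$ on $\tilde Y$, whence $\tau_0 \leftrightarrow p_0$.

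Combining these ingredients yields the Corollary. For ``$\Rightarrow$'', apply the ``moreover'' part of Theorem~\ref{key-thm-effective} to obtain an algebraic curve $\tilde C := V(g_{n+1}) \subseteq \tilde Y$ whose unique place on $E \cup \tilde L$ is (by the construction via key forms of $\nu$) a point of $E^*$ disjoint from $\tilde E^0$, hence a point of $E^* \setminus \{p_0\}$; the normalized curve valuation of $C := \tilde C \cap \cc^2$ then represents a tangent vector $\tau \neq \tau_0$ at $\tilde\nu$, as required. For ``$\Leftarrow$'', a tangent vector $\tau \neq \tau_0$ as in the statement is represented by the normalized curve valuation of some $C \subseteq \cc^2$ with one place at infinity, whose center on $\tilde Y$ therefore lies in $E^* \setminus \{p_0\}$; the strict transform $\tilde C$ thus has its unique place on $E \cup \tilde L$ in $E^* \setminus \{p_0\}$, in particular is disjoint from $\tilde E^0$, and Theorem~\ref{key-thm-geometric}\eqref{existence-assertion-one-place} $\im$ \eqref{algebraic-assertion} concludes. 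The main technical point is the identification $\tau_0 \leftrightarrow p_0$, handled via the tree structure of the dual graph.
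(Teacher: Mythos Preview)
The paper does not give an explicit proof of this corollary; it merely says the statement is ``an (immediate) translation of Theorem~\ref{key-thm-geometric} in the terminology of the valuative tree.'' Your argument is precisely the expected fleshing-out of that translation, and the key identification $\tau_0 \leftrightarrow p_0 := E^* \cap \tilde E^0$ via the path from $\tilde L$ to $E^*$ in the dual tree is correct.

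There is one imprecision worth fixing in your ``$\Leftarrow$'' step. You assert that if the curve valuation $\mu_C$ represents a tangent vector $\tau \neq \tau_0$ at $\tilde\nu$, then its center on $\tilde Y$ lies in $E^*\setminus\{p_0\}$. That is too strong: under the bijection between tangent directions at $\tilde\nu$ and points of $E^*$, what $\tau\neq\tau_0$ tells you is only that the center of $\mu_C$ on $\tilde Y$ lies in the union of those connected components of $(E\cup\tilde L)\setminus\{p_0\}$ that do \emph{not} contain $\tilde L$, i.e.\ in $(E^*\setminus\{p_0\})\cup(\tilde E\setminus\tilde E^0)$. The center may well sit on a component of $\tilde E$ other than $\tilde E^0$ rather than on $E^*$ itself. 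Fortunately your conclusion ``$\tilde C$ is disjoint from $\tilde E^0$'' follows just as well from this weaker localization (since $p_0\in\tilde E^0$, disjointness from $\tilde E^0$ is exactly what remains), so the argument survives once you weaken that one sentence. A symmetric remark applies to ``$\Rightarrow$'': you do not actually need the place of $V(g_{n+1})$ to land on $E^*$; Assertion~\ref{existence-assertion-one-place} of Theorem~\ref{key-thm-geometric} already gives $\tilde C\cap\tilde E^0=\emptyset$, hence the center of $\mu_C$ avoids $\tilde E^0\ni p_0$, hence $\tau\neq\tau_0$.
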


\subsection{Weighted dual graphs corresponding to only algebraic, only non-algebraic, or both types of contractions - the semigroup conditions}

Example \ref{non-example} shows that in general algebraic contractibility can not be determined only from the weighted dual graph (of the curve to be contracted). However, it is possible to completely characterize (in the set up of Question \ref{contract-question}) the weighted dual graphs which correspond to {\em only algebraic} contractions, those which correspond to {\em only non-algebraic} contractions, and those which correspond to {\em both} types of contractions (Theorem \ref{semigroup-prop}). The characterization is given in terms of two sets of {\em semigroup conditions} \eqref{semigroup-criterion-1} and \eqref{semigroup-criterion-2}. The `first' set of semigroup conditions \eqref{semigroup-criterion-1} are equivalent to the semigroup conditions that appear in the theory of plane curves with one place at infinity developed in \cite{abhya-moh-tschirnhausen}, \cite{abhyankar-expansion}, \cite{abhyankar-semigroup}, \cite{sathaye-stenerson}. We now elaborate on this connection: \\

As noted in Remark \ref{essential-remark}, we encode (in Subsection \ref{subsec-encoding}) the input data for Question \ref{contract-question} in terms of a curve-germ $C$ at a point $O \in L$ and a positive integer $r$. To such a pair $(C,r)$, we associate a sequence of {\em virtual poles} (Definition \ref{virtual-defn}) and show that the dual graph of $\tilde E$ is the dual graph for a ({\em possibly different}) $\tilde E'$ which is algebraically contractible iff the virtual poles satisfy the semigroup conditions \eqref{semigroup-criterion-1}. On the other hand, it follows from the theory of plane curves with one place at infinity that the {\em same} semigroup conditions are equivalent to the existence of a plane algebraic curve $C'$ with one place at infinity with `almost' the same singularity type at infinity as the singularity type of $C$ at $O$. Moreover, if the curve $C'$ exists, then the virtual poles of $(C,r)$ are (up to a constant factor) precisely the generators of the {\em semigroup of poles} at the point at infinity of $C'$ - i.e.\ in this case {\em virtual poles are real!} (See Remark \ref{curve-semigroup-remark}.)

\subsection{Further applications and comments on the structure of the proof}
Identifying $\cc^2$ with $\pp^2 \setminus L$ (as in the set up of Theorem \ref{key-thm-effective}), turns Question \ref{contract-question} into a question about algebraicity of normal (analytic) compactifications of $\cc^2$ with one irreducible curve at infinity, which we refer to as {\em primitive compactifications} of $\cc^2$. Lemma \ref{poly-positive-prop} implies that every primitive algebraic compactification of $\cc^2$ is in fact a {\em weighted complete intersection}. Moreover, it follows from Lemma \ref{semi-isomorphism} that the curve at infinity (on a primitive algebraic compactification) has at most one singular point, and the singularity is at most a {\em toric} (non-normal) singularity. The follow up of this article in \cite{sub2-2} is directed towards a description of the group of isomorphisms and moduli spaces of primitive compactifications and computation of their singularity invariants. \\

Finally, some remarks on our proofs of the results in this article. These are split into three steps: in Section \ref{sec-proof} we reduce the results quoted in the Introduction, namely Theorems \ref{key-thm-geometric}, \ref{key-thm-effective} and \ref{semigroup-prop}, to Proposition \ref{key-prop}. We prove Proposition \ref{key-prop} in Section \ref{sec-key-proof} based on some lemmas, whose proofs we defer to Section \ref{sec-technical-proofs}. Our proofs are self-contained modulo some properties of key polynomials which we list in Section \ref{sec-defn} and are motivated by the general theory of projective completions via `degree-like functions' \cite{sub1}. \\

The technical difficulty of the proofs can be attributed to the fact that it is possible for $x$ to appear with a {\em negative} exponent in a monomial in some of the key forms $g_k$ of Theorem \ref{key-thm-effective} (unlike the classical case of `key polynomials', which really are polynomials). The latter is in fact the crucial new element that makes the `calculus' for our effective criterion possible (i.e.\ it is essentially a blessing in disguise!). The primary `content' of the results in this article is concentrated in Assertion \ref{hard-assertion} of Proposition \ref{key-prop}, which roughly states that if the exponent of $x$ in a monomial of some $g_k$ is negative, then the same is true (i.e.\ $x$ appears with negative exponent in some monomial) for every $f \in \cc[x,x^{-1},y]$ such that {\em all} the branches at infinity of $f = 0$ have the same initial Puiseux expansion as that of the (unique) branch of $g_k  = 0$ at $O$ (where $O := \pi(E^*)$ is as in the paragraph preceding Theorem \ref{key-thm-effective}). The key idea of the proof (which is a natural consequence of the point of view of `degree-like functions') is to `lift' the cancellations of monomials (in which $x$ appears with negative exponents) in $\cc[x,y]$ to cancellations of monomials in the graded ring of the compactifications of $\cc^2$ corresponding to the {\em semidegrees} (Definition \ref{semi-defn}) $\delta_k$ of \eqref{delta-k} corresponding to $g_k$. \\

\subsection{Acknowledgements}
This project started during my Ph.D.\ under Professor Pierre Milman to answer some of his questions, and profited enormously from his valuable suggestions and speaking in his seminars. I would like to thank Professor Peter Russell for inviting me to present this result and for his helpful remarks. The exposition of Section \ref{sec-encoding} is motivated by his suggestions. The idea for presenting the `effective answer' in terms of key polynomials came from a remark of Tommaso de Fernex. I thank Mattias Jonsson and Mark Spivakovsky for bearing with my bugging at different stages of this write up, and finally, I have to thank Dmitry Kerner to force me to think in geometric terms by his questions. 

\section{Encoding Question \ref{contract-question} in terms of a curve-germ and an integer and statements of the semigroup conditions}  \label{sec-encoding}
In Section \ref{subsec-encoding} we encode the input data of Question \ref{contract-question} in terms of a curve-germ and an integer. We also state a simple version of our effective criterion which is applicable in the case that the singularity of the curve-germ has one Puiseux pair (Proposition \ref{effective-answer-Puiseux-1}). In Subsection \ref{subsec-semigroup} we state the {\em semigroup conditions} and characterize the dual graphs which correspond to only algebraic or only non-algebraic, or both types of contractions (Theorem \ref{semigroup-prop}).

\subsection{The encoding and a version of effective criterion in the case of a single Puiseux pair} \label{subsec-encoding}

We continue to use the notations of Section \ref{sec-question}. At first note that in the set up of Question \ref{contract-question} we may w.l.o.g.\ assume the following 
\begin{enumerate}
\item $\pi$ is a sequence of blow-ups such that every blow-up (other than the first one) is centered at a point on the exceptional divisor of the preceding blow-up. 
\item $E^*$ is the exceptional divisor of the {\em last} blow-up.
\end{enumerate}
Now assume the above conditions are satisfied. Let
\begin{description}
\item[] $C^* := $ an analytic curve germ at a generic point on $E^*$ which is transversal to $E^*$,
\item[] $C := \pi(C^*)$, 
\item[] $r := $ (number of total blow-ups in $\pi$) $-$ (the minimum number of blow-ups necessary to ensure that the strict transform of $C$ transversally intersects the union of the strict transform of $L$ and the exceptional divisor).
\end{description}
It is straightforward to see that $L$, $C$ and $r$ uniquely determine $\tilde Y$, $E^*$ and $\tilde E$ via the following construction:

\paragraph{Construction of $\tilde Y$, $E^*$ and $\tilde E$ from $(L,C,r)$:}
\mbox{}
\begin{description}
\item[] $\tilde Y :=$ the surface formed by at first constructing (via a sequence of blow-ups) the minimal resolution of the singularity of $C \cup L$ and then blowing up successively the point of intersection of the strict transform of $C$ and the exceptional divisor $r$ more times,
\item[] $E^* :=$ the `last' exceptional curve, i.e.\ the exceptional divisor of the last blow-up in the construction of $\tilde Y$,
\item[] $\tilde E :=$ the union of the strict transform $\tilde L$ (on $\tilde Y$) of $L$ with the strict transforms of all the exceptional curves (of the sequence of blow-ups in the construction of $\tilde Y$) except $E^*$. 
\end{description}

\begin{figure}[htp]
		\begin{center}
			\begin{tikzpicture}[font = \scriptsize]
			\pgfmathsetmacro\scalefactor{1.1}
			\pgfmathsetmacro\centerdist{3*\scalefactor}

			\pgfmathsetmacro\distop{3*\scalefactor}
			\pgfmathsetmacro\arrowhordist{1.85*\scalefactor}

			\pgfmathsetmacro\arrowverttopdist{1.85*\scalefactor}
			\pgfmathsetmacro\arrowvertbottomdist{1.15*\scalefactor}

			\begin{scope}[shift={(-\centerdist,0)}, scale=\scalefactor]

				\draw[] (0,0) circle (1cm);
				\draw [green,thick] (-0.75, 0.2) -- (0.75, 0.2);

				\fill [red] (-.4,0.2) circle (1.25pt);
				\draw [red] (-0.4,0.375) node {$O$};
				
				\draw [blue, thick] plot [smooth] coordinates {(-0.3,-0.2) (-.55, 0.125) (-.4, 0.2) };
				\draw [blue, thick] plot [smooth] coordinates {(-0.5,-0.2) (-.65, 0.1) (-.4, 0.2) };

				\draw (0.8,0) node {\textcolor{green}{$L$} $:=$ a line};
				\draw (-0.8,-0.4) node [right] {\textcolor{blue}{$C:=\!$} analytically};
				\draw (-0.8,-0.6) node [right] {\phantom{\textcolor{blue}{$C:=\!$}}  irr.\ curve germ};
				\draw (0, -1.2) node {$\cc\pp^2$};

			\end{scope}
			
			\begin{scope}[shift={(-\centerdist, \distop)}, scale=\scalefactor]

				\draw[] (0,0) circle (1cm);
				\draw [green, thick] plot [smooth] coordinates {(-0.4,0.7) (-0.8,-0.4)};
				\draw (-0.5,0) node [green]{$L'$};
				
				\draw [] (-0.7, 0.5) -- (0.3,0.8);
				\draw [blue,thick] plot [smooth] coordinates {(0, 0.2) (-0.1,0.4) (0,0.5) (-0.2, 0.75)};
				
				\draw (0.1,0.1) node [blue]{$C'$};

				\draw [] (-0.9, 0.1) -- (-0.4,-0.5);
				
			\end{scope}
			
			\begin{scope}[shift={(\centerdist, \distop)}, scale=\scalefactor]

				\draw[] (0,0) circle (1cm);
				\draw [green, thick] plot [smooth] coordinates {(-0.4,0.7) (-0.8,-0.4)};
				\draw (-0.45,0) node [green]{$\tilde L$};
				\draw [] (-0.7, 0.5) -- (0.3,0.8);
				\draw [] (-0.1, 0.8) -- (0.7, 0.4);
				\draw [] (-0.9, 0.1) -- (-0.4,-0.5);
				\draw [red, thick] (0.5, 0.6) -- (0.7, -0.1);
				\draw (0.5,-0.4) node [right, text width = 3cm] {\textcolor{red}{$E^*$} $:=$ `last' exceptional curve}; 
				
				\draw [blue,thick] plot [smooth] coordinates {(0.8, 0.1) (0.5,0) (0.6,-0.1) (0.4, -0.2)};
				\draw (0.25,-0.2) node [blue]{$C^*$};

				\draw [dashed] (-1, -0.6) -- (-1, 0.9) -- (0.8, 0.9) -- (0.8, 0.3) -- (-0.3,0.3) -- (-0.3, -0.6) -- cycle;
				\draw (0.8,0.6) node [right, text width = 4cm] {$\tilde E :=$ union of $\tilde L$ and all exceptional curves except $E^*$}; 
			\end{scope}

			\begin{scope}[shift={(\centerdist, 0)}, scale=\scalefactor]

				\draw[] (0,0) circle (1cm);
				\draw [red, thick] plot [smooth] coordinates {(0.5, 0.6) (0.7, -0.1)};
				
				\draw [blue,thick] plot [smooth] coordinates {(0.8, 0.1) (0.5,0) (0.6,-0.1) (0.4, -0.2)};		

				\draw (0, -1.2) node {$Y$}; 
				\draw (0,-1.5) node [shift={(2,0)}] {\phantom{f $\cc^2$ with one irr.\ curve at $\infty$}};
				
			\end{scope}

			\draw[->] (\arrowhordist, \distop) -- (-\arrowhordist,\distop) node [above, pos=0.5, text width = 3cm]{blow up $C' \cap $ (excep.\ divisor) $r$ more times};
			\draw[->] (-\centerdist, \arrowverttopdist) -- (-\centerdist, \arrowvertbottomdist) node [left, pos=0.4, text width=2.5cm, shift={(0,0)}] {resolution of singularities of $C \cup L$};

			\draw[->] (\centerdist, \arrowverttopdist) -- (\centerdist, \arrowvertbottomdist) node [right, pos=0.4, text width=3cm, shift={(0.2,0)}] {contract $\tilde E$ analytically (if possible)};
			
			\draw (0.85,-1.5) node [text width = 4cm] {Question: Is $Y$ algebraic?}; 

			\end{tikzpicture}
			\caption{Formulation of Question \ref{contract-question} in terms of $(L,C,r)$}\label{fig:L-c-r}
		\end{center}
\end{figure}
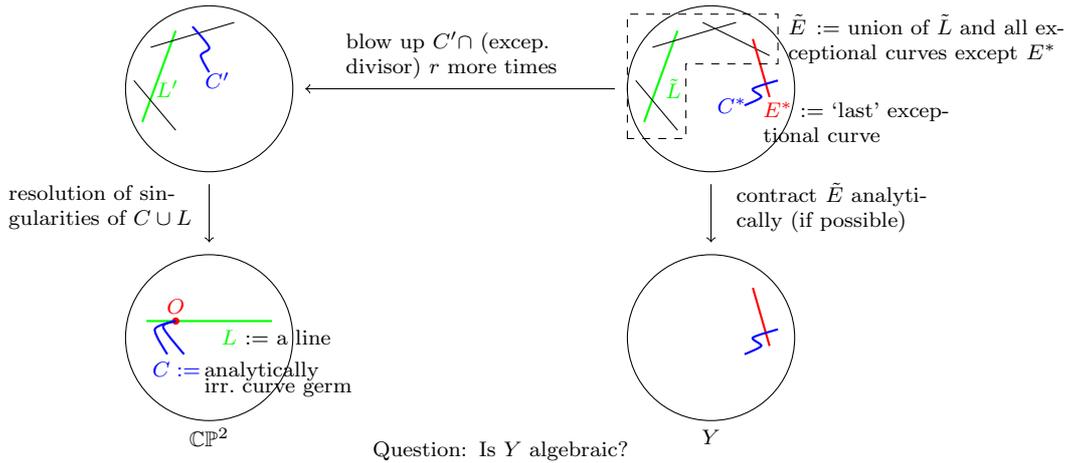

\begin{bold-question}[Reformulation of Question \ref{contract-question} - see Figure \ref{fig:L-c-r}]  \label{contract-2-question}
Let $L \subseteq \pp^2$ be a line, $C$ be an analytic curve-germ at a point $O \in L$ and $r$ be a non-negative integer. Let $\tilde Y_{L,C,r}$, $E^*_{L,C,r}$ and $\tilde E_{L,C,r}$ be the corresponding surface and divisors resulting from the above construction. Then we ask
\begin{enumerate}
\let\oldenumi\theenumi
\renewcommand{\theenumi}{\ref{contract-2-question}.\oldenumi}
\item \label{contract-2-question-1} When is $\tilde E_{L,C,r}$ contractible?
\item \label{contract-2-question-2} When is $\tilde E_{L,C,r}$ algebraically contractible?
\end{enumerate}
\end{bold-question}

In the set up of Question \ref{contract-2-question}, we will assume that the {\em Puiseux expansion} $v = \phi(u)$ of $C$ at $O$ is known, where $(u,v)$ is a system of affine coordinates near $O$ such that $L = \{u=0\}$ and $v=0$ is also a line on $\pp^2$ (Assumption \ref{essential-assumption}). We now give an answer of Question \ref{contract-2-question-1} in terms of the Puiseux series $\phi(u)$ of $C$ using a result of \cite{sub2-1} and then give a simple answer to Question \ref{contract-2-question-2} in the case that $\phi(u)$ has only a single {\em Puiseux pair} (see Definition \ref{puiseuxnition}). We start with a simple observation:

\begin{lemma}
If $\ord_u(\phi) \geq 1$, then $\tilde E_{L,C,r}$ is {\em not} contractible.
\end{lemma}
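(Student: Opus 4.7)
The plan is to deduce the conclusion from Grauert's negative-definiteness criterion by showing that the strict transform $\tilde L$ of $L$ on $\tilde Y_{L,C,r}$ has non-negative self-intersection. Since $\tilde L$ is a component of $\tilde E_{L,C,r}$, a non-negative diagonal entry $\tilde L\cdot \tilde L$ immediately prevents the intersection matrix of the components of $\tilde E_{L,C,r}$ from being negative definite, and Grauert's theorem then rules out analytic contractibility.

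The key geometric content of the hypothesis $\ord_u(\phi)\geq 1$ is that $C$ and $L$ have distinct tangent directions at $O$. Indeed, writing $\phi(u)=au^k+\cdots$ with $a\neq 0$ and $k\geq 1$ (possibly fractional), the tangent cone of $C$ at $O$ is $\{v=au\}$ when $k=1$ and $\{v=0\}$ when $k>1$, both of which differ from the tangent direction $\{u=0\}$ of $L$. (Note that I do not claim $C$ and $L$ meet with multiplicity one at $O$; this can fail if $C$ is singular. Only the distinctness of tangent directions will be needed.)

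With this in hand, I would track the self-intersection of the successive strict transforms of $L$ through the sequence of blow-ups defining $\pi$. Starting from $L\cdot L=1$ on $\mathbb{P}^2$, each blow-up reduces the self-intersection of the current strict transform of $L$ by one precisely when its center lies on that strict transform, and leaves it unchanged otherwise. Because $C$ and $L$ have distinct tangent directions at $O$, after a single blow-up at $O$ the strict transforms $L'$ and $C'$ meet the new exceptional curve $E_1$ at distinct points. The construction of $\tilde Y_{L,C,r}$ in Subsection~\ref{subsec-encoding} dictates that every blow-up of $\pi$ after the first is centered at the intersection of the strict transform of $C$ with the exceptional divisor; all such centers lie on the $C$-chain and none can lie on a strict transform of $L$. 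Consequently at most one blow-up of $\pi$ is centered on a strict transform of $L$, giving $\tilde L\cdot \tilde L\geq 1-1=0$ and finishing the argument.

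The only step requiring care is the bookkeeping in the various degenerate shapes of the construction (for example, when the minimal resolution of $C\cup L$ itself needs no blow-up, so that the very first blow-up is forced to be at $O$ in order for the $r$ extra blow-ups to be well-defined). In every such case the bound ``at most one blow-up of $\pi$ is centered on $L$'' is robust, so the conclusion $\tilde L\cdot\tilde L\geq 0$ — and therefore the non-contractibility of $\tilde E_{L,C,r}$ — goes through unchanged.
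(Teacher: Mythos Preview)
Your proposal is correct and follows essentially the same argument as the paper: both deduce from $\ord_u(\phi)\geq 1$ that $C$ and $L$ have distinct tangent directions at $O$, hence their strict transforms separate after the first blow-up, so $\tilde L\cdot\tilde L\geq 0$ and Grauert's criterion applies. The paper's version is simply terser, omitting the explicit bookkeeping on blow-up centers and the discussion of degenerate cases that you spell out.
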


\begin{proof}
Indeed, $\ord_u(\phi) \geq 1$ implies that $C$ is {\em not} tangent to $L$, so that the strict transforms of $L$ and $C$ on the blow-up of $\pp^2$ at $O$ do not intersect. It follows that $\tilde L$ has self-intersection $\geq 0$, and consequently it is not contractible by Grauert's criterion. 
\end{proof}

From now on we assume that $\ord_u(\phi) < 1$. Let the Puiseux pairs of $\phi$ be $(\tilde q_1, p_1), \ldots, (\tilde q_{\tilde l}, p_{\tilde l})$ (note that $\ord_u(\phi)< 1$ implies that $\tilde l \geq 1$ and $\ord_u(\phi) =  \tilde q_1/p_1$). For every $\omega \in \rr$, let us write $[\phi]_{< \omega}$ for the (finite) Puiseux series obtained by summing up all terms of $\phi$ which have order $< \omega$. Define
\begin{align*}
\alpha_{L,C,r} 	&:= \text{intersection multiplicity at $O$ of $C$ and a curve-germ with Puiseux expansion}  \\
				& \mbox{\phantom{$:\mathrel{=}\ $} } v = [\phi(u)]_{< (\tilde q_{\tilde l}+r)/p}  + \xi^* u^{(\tilde q_{\tilde l}+r)/p} + \hot\ \text{for a generic}\ \xi^* \in \cc, \ \text{where} \label{local-alpha} \\
p 				&:= \text{polydromy order of $\phi$ (Definition \ref{puiseuxnition})} = p_1p_2 \cdots p_{\tilde l}. 
\end{align*}	
A straightforward computation using the definition of intersection product in terms of Puiseux series shows that
\begin{align}
\alpha_{L,C,r}	&= p \left( (p_1 \cdots p_{\tilde l} - p_2 \cdots p_{\tilde l}) \frac{\tilde q_1}{p_1} + (p_2 \cdots p_{\tilde l} - p_3 \cdots p_{\tilde l}) \frac{\tilde q_2}{p_1p_2} \right. \notag \\
				& \quad \qquad \left. + \cdots + (p_{\tilde l-1} p_{\tilde l} - p_{l}) \frac{\tilde q_{l-1}}{p_1 \cdots p_{l-1}}  + (p_{\tilde l} -1) \frac{\tilde q_{\tilde l}}{p_1 \cdots p_{\tilde l}} \right) + \tilde q_{\tilde l} + r,		
\end{align} 

The following proposition is an immediate corollary of \cite[Corollary 4.11 and Remark-Definition 4.13]{sub2-1}.

\begin{prop}[Answer to Question \ref{contract-2-question-1}] \label{existence-answer-local}
$\tilde E_{L,C,r}$ is contractible iff $\ord_u(\phi) < 1$ and $\alpha_{L,C,r} < p^2$. \qed
\end{prop}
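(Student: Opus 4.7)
The preceding lemma gives the direction $\ord_u(\phi) \geq 1 \Rightarrow$ non-contractibility, so I may assume $\ord_u(\phi) < 1$ throughout and aim to prove the remaining equivalence: $\tilde E_{L,C,r}$ is contractible iff $\alpha_{L,C,r} < p^2$.

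By Grauert's criterion, contractibility is equivalent to negative definiteness of the intersection matrix $M_{\mathrm{tot}}$ of the components of $\tilde E_{L,C,r}$. Since $\tilde E_{L,C,r} \setminus \tilde L$ lies in the $\pi$-exceptional divisor, its intersection matrix $M$ is already negative definite (and, by the sandwich argument of Remark \ref{simplest-remark}, it contracts to a normal surface $Y'$ with only rational singularities). Writing $M_{\mathrm{tot}}$ in block form with $\tilde L$ singled out and denoting by $w$ the column vector $(\tilde L \cdot E_i)$ as $E_i$ ranges over components of $\tilde E_{L,C,r} \setminus \tilde L$, Sylvester's criterion (Schur complement against the negative-definite $M$) reduces the question to the single inequality $\tilde L^2 - w^T M^{-1} w < 0$, whose left-hand side is precisely the Mumford $\qq$-valued self-intersection $(\tilde L_{Y'})^2$ of the image of $\tilde L$ on $Y'$.

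To compute $(\tilde L_{Y'})^2$, I would use an auxiliary curve-germ $C^*_\xi$ at $O$ with Puiseux expansion $v = [\phi(u)]_{<(\tilde q_{\tilde l} + r)/p} + \xi\, u^{(\tilde q_{\tilde l} + r)/p} + \text{h.o.t.}$ for generic $\xi \in \cc$; by construction $(C \cdot C^*_\xi)_O = \alpha_{L,C,r}$. A direct check via the Puiseux-series algorithm for resolution of plane-curve singularities shows that the strict transform $\tilde C^*_\xi$ on $\tilde Y$ meets the exceptional locus only along $E^*$, transversally at one smooth point; in particular $\tilde C^*_\xi$ is disjoint from $\tilde E_{L,C,r} \setminus \tilde L$. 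Writing Mumford's pullback as ${\pi'}^*(\tilde L_{Y'}) = \tilde L + D$ with $D$ a $\qq$-divisor supported on $\tilde E_{L,C,r} \setminus \tilde L$, the disjointness of $\tilde C^*_\xi$ from $D$ together with the projection formula gives ${\pi'}^*(\tilde L_{Y'}) \cdot \tilde C^*_\xi = \tilde L \cdot \tilde C^*_\xi = \tilde L_{Y'} \cdot \pi'_*(\tilde C^*_\xi)$. Combining such identities for two generic parameters $\xi_1, \xi_2$, and expressing $\tilde L_{Y'}$ as a $\qq$-linear combination of the pushforwards $\pi'_*(\tilde C^*_{\xi_i})$ modulo a contribution concentrated at the image of $E^*$, one arrives at a closed-form expression $(\tilde L_{Y'})^2 = c\,(\alpha_{L,C,r} - p^2)$ with $c > 0$, yielding the claimed equivalence.

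\textbf{Main obstacle.} The technical heart is the explicit bookkeeping required to produce this closed form and, crucially, to identify the normalization constant as $p^2$: one must track how the Puiseux pairs $(\tilde q_j, p_j)$ together with the $r$ additional blow-ups past the last characteristic exponent simultaneously determine the multiplicities of the $\qq$-divisor $D$ and the local intersection numbers $(C \cdot L)_O = p$ and $(C \cdot C^*_\xi)_O = \alpha_{L,C,r}$. This amounts to a combinatorial induction over the resolution dual graph dictated by the Puiseux algorithm, and it is precisely this computation that underlies the cited result from which the proposition is extracted.
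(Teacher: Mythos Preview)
The paper does not prove this proposition at all: it is stated as an immediate corollary of \cite[Corollary 4.11 and Remark-Definition 4.13]{sub2-1} and marked with \qed. So there is no in-paper proof to compare against; you are supplying an independent argument.

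Your strategy is sound. The Schur-complement reduction to the single inequality $(\tilde L_{Y'})^2<0$ is correct, and probing with the generic germ $C^*_\xi$ is the right tool. The only muddled step is the phrase ``expressing $\tilde L_{Y'}$ as a $\qq$-linear combination of the pushforwards $\pi'_*(\tilde C^*_{\xi_i})$'': germs are not global divisor classes, so this does not literally make sense, and it is not needed. A cleaner route that realizes your intended formula: since $Y'$ is sandwiched, the induced morphism $\sigma:Y'\to\pp^2$ exists and $Y'$ is $\qq$-factorial, so one may write $\sigma^*L=\tilde L_{Y'}+p\,E^*_{Y'}$ (the coefficient is the $E^*$-valuation of $u$, namely the polydromy order $p$). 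From $(\sigma^*L)^2=1$ and $\sigma^*L\cdot E^*_{Y'}=0$ one gets $(\tilde L_{Y'})^2=1+p^2(E^*_{Y'})^2$. For $(E^*_{Y'})^2$ your germ does the job locally: since two generic germs $C^*_{\xi_1},C^*_{\xi_2}$ separate exactly on $E^*$ and $(C^*_{\xi_1}\cdot C^*_{\xi_2})_O=\alpha_{L,C,r}$, the multiplicity of $E^*$ in $\pi^*C^*_\xi$ equals $\alpha_{L,C,r}$; hence $\sigma^*C^*_\xi = C^*_{\xi,Y'}+\alpha_{L,C,r}\,E^*_{Y'}$ as local Cartier divisors near $\sigma^{-1}(O)$, and intersecting with $E^*_{Y'}$ (using $C^*_{\xi,Y'}\cdot E^*_{Y'}=1$) gives $(E^*_{Y'})^2=-1/\alpha_{L,C,r}$. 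Thus
\[
(\tilde L_{Y'})^2=\frac{\alpha_{L,C,r}-p^2}{\alpha_{L,C,r}},
\]
so your constant is $c=1/\alpha_{L,C,r}>0$ and the equivalence follows. The ``explicit bookkeeping'' you flag as the obstacle is therefore short once the computation is organized through $\sigma^*L$ and $\sigma^*C^*_\xi$ rather than through attempted linear combinations of germs.
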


The following is a reformulation of Theorem \ref{key-thm-effective} applicable in the case that $C$ has only one Puiseux pair, i.e.\ $\tilde l = 1$.

\begin{prop}[Answer to Question \ref{contract-2-question-2} when $\tilde l=1$] \label{effective-answer-Puiseux-1} 
Let $(L,C,r)$ be as in Question \ref{contract-2-question}. Assume that the Puiseux expansion $v = \phi(u)$ of $C$ at $O$ has only one Puiseux pair $(\tilde q,p)$. Let $\tilde \omega$ be the weighted order on $\cc(u,v)$ which gives weights $p$ to $u$ and $\tilde q$ to $v$. Let $f(u,v)$ be the (unique) Weirstrass polynomial in $v$ which defines $C$ near $O$. Define $\tilde f$ to be the sum of all monomial terms of $f$ which have $\tilde \omega$-value less than $\alpha_{L,C,r} = p\tilde q+r$. Then $\tilde E_{L,C,r}$ is algebraically contractible iff it is contractible and $\deg_{(u,v)}(\tilde f) \leq p$, where $\deg_{(u,v)}$ is the usual degree in $(u,v)$-coordinates.
\end{prop}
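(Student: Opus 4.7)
The plan is to deduce this from Theorem \ref{key-thm-effective} by explicitly computing the key polynomials in the case when $\phi$ has a single Puiseux pair.

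First, assuming $\tilde E_{L,C,r}$ is (analytically) contractible (equivalently, by Proposition \ref{existence-answer-local}, $\alpha_{L,C,r} < p^2$), Theorem \ref{key-thm-effective} reduces algebraic contractibility to the single condition $g_{n+1} \in \cc[x,y]$. Since $g_{n+1}(x,y) = x^{\deg_v \tilde g_{n+1}}\tilde g_{n+1}(1/x,y/x)$, a monomial $u^i v^j$ in $\tilde g_{n+1}$ contributes $y^j x^{\deg_v \tilde g_{n+1} - i - j}$ to $g_{n+1}$, so $g_{n+1} \in \cc[x,y]$ if and only if $\deg_{(u,v)} \tilde g_{n+1} \leq \deg_v \tilde g_{n+1}$.

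Next, I compute the key polynomials for the valuation $\nu$ of $E^*$ using the algorithm referenced in Remark \ref{essential-remark}, starting from $\tilde g_0 = u$ and $\tilde g_1 = v$. Because $\phi$ has a single Puiseux pair $(\tilde q, p)$, no further characteristic jump occurs beyond $\tilde g_1$, so the construction terminates with exactly one additional key polynomial $\tilde g_{n+1} = \tilde g_2$, monic in $v$ of degree $p$. This $\tilde g_2$ is obtained as the sum of the monomial terms of the Weierstrass polynomial $f = \prod_\zeta (v - \phi_\zeta(u))$ whose $\tilde\omega$-value falls below the truncation threshold dictated by $\nu$. Each of the $r$ extra blow-ups past the minimal resolution of $C \cup L$ raises this threshold by one (as each blow-up is centered at the smooth intersection of the strict transform of $C$ with the preceding exceptional divisor), pushing it from $p\tilde q$ up to precisely $\alpha_{L,C,r} = p\tilde q + r$; hence $\tilde g_2$ coincides with the truncation $\tilde f$ of the statement.

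Combining the two steps, the criterion $\deg_{(u,v)} \tilde g_{n+1} \leq \deg_v \tilde g_{n+1}$ becomes $\deg_{(u,v)} \tilde f \leq p$, establishing the claimed equivalence (with contractibility subsumed by the hypothesis of Theorem \ref{key-thm-effective}). The main obstacle is the second step: a rigorous identification of $\tilde g_2$ with $\tilde f$, which hinges on verifying both (i) that the single Puiseux pair hypothesis indeed forces $n+1 = 2$ in the key polynomial sequence, and (ii) that the valuation $\nu(E^*)$ produces the exact threshold $\alpha_{L,C,r}$ rather than some shifted value. Once these points are secured by a careful induction along the sequence of blow-ups defining $\tilde Y$, the proposition follows immediately from the general criterion.
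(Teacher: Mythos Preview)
Your overall strategy matches the paper's: reduce to Theorem \ref{key-thm-effective}, translate the condition $g_{n+1}\in\cc[x,y]$ into $\deg_{(u,v)}\tilde g_{n+1}\le \deg_v\tilde g_{n+1}$, and then identify $\tilde g_{n+1}$ with $\tilde f$. The first two steps are fine. The third step, however, contains a genuine error.

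You assert that the single Puiseux pair forces $n+1=2$, so that $\tilde g_{n+1}=\tilde g_2$. This is false. The key polynomial sequence of Definition \ref{key-pol-defn} allows $\alpha_j=1$, and each term of $\phi$ between the characteristic exponent $\tilde q/p$ and the cutoff $(\tilde q+r)/p$ typically contributes its own key polynomial. Example \ref{non-example-aagain} already exhibits this: for $C_2$ with $8\le r\le 9$ (still a single Puiseux pair $(3,5)$) the key polynomials are $u,\,v,\,v^5-u^3,\,v^5-u^3-5v^4u^2$, so $n+1=3$. What is true is only that the \emph{last} key polynomial $\tilde g_{n+1}$ is monic in $v$ of degree $p$ and equals $\tilde f$; the intermediate $\tilde g_j$ for $2\le j\le n$ are successive partial truncations obtained by peeling off one monomial at a time, as the paper's proof makes explicit (``$\tilde g_j=\tilde g_{j-1}-\text{a monomial term}$ for $j\ge 3$'').

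You also need to separate out the case $r=0$: there $\tilde f=0$, the key polynomials are just $u,v$, and the contraction is always algebraic (the surface is the weighted projective plane $\pp^2(1,p,p-\tilde q)$). Your identification $\tilde g_2=\tilde f$ would give $\tilde g_2=0$ in this case, which is nonsensical. Once you replace ``$\tilde g_2$'' by ``the last key polynomial $\tilde g_{n+1}$'' throughout and treat $r=0$ separately, your argument becomes the paper's.
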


We prove Proposition \ref{effective-answer-Puiseux-1} in Section \ref{sec-proof}.

\begin{example}[Continuation of Example \ref{non-example} - see also Remark \ref{1-puiseux-remark}] \label{non-example-again}
Let $L$ and $C_1$ and $C_2$ be as in Example \ref{non-example}. We consider Question \ref{contract-2-question} for $C_1$ and $C_2$ and $r \geq 0$ (Example \ref{non-example} considered the case $r=8$). Figure \ref{fig:non-example-again} depicts the dual graph $\tilde E_{L,C_i,r}$; in particular $\tilde E_{L,C_i,r}$ is disconnected for $r = 0$.\\

\begin{figure}[htp]
\begin{center}

\subfigure[Case $r = 0$]{
\begin{tikzpicture}[scale=1, font = \small] 	
 	\pgfmathsetmacro\edge{.75}
 	\pgfmathsetmacro\vedge{.5}
	
 	\draw[thick] (-2*\edge,0) -- (-\edge,0);
 	\draw[thick] (0,-\vedge) -- (0,-2*\vedge);

 	\fill[black] ( - 2*\edge, 0) circle (3pt);
 	\fill[black] (-\edge, 0) circle (3pt);
 	\fill[black] (0, -\vedge) circle (3pt);
 	\fill[black] (0, - 2*\vedge) circle (3pt);
 	
 	\draw (-2*\edge,0)  node (e0up) [above] {$-1$};
 	\draw (-2*\edge,0 )  node (e0down) [below] {$\tilde L$};
 	\draw (-\edge,0 )  node (e1up) [above] {$-3$};	
 	\draw (0,-\vedge )  node (down1) [right] {$-2$};
 	\draw (0, -2*\vedge)  node (down2) [right] {$-3$};
 			 	
\end{tikzpicture}
}
\subfigure[Case $r \geq 1$]{
\begin{tikzpicture}[scale=1, font = \small] 	
 	\pgfmathsetmacro\dashedge{4.5}	
 	\pgfmathsetmacro\edge{.75}
 	\pgfmathsetmacro\vedge{.5}

 	\draw[thick] (-2*\edge,0) -- (\edge,0);
 	\draw[thick] (0,0) -- (0,-2*\vedge);
 	\draw[thick, dashed] (\edge,0) -- (\edge + \dashedge,0);
 	
 	\fill[black] ( - 2*\edge, 0) circle (3pt);
 	\fill[black] (-\edge, 0) circle (3pt);
 	\fill[black] (0, 0) circle (3pt);
 	\fill[black] (0, -\vedge) circle (3pt);
 	\fill[black] (0, - 2*\vedge) circle (3pt);
 	\fill[black] (\edge, 0) circle (3pt);
 	\fill[black] (\edge + \dashedge, 0) circle (3pt);
 	
 	\draw (-2*\edge,0)  node (e0up) [above] {$-1$};
 	\draw (-2*\edge,0 )  node (e0down) [below] {$\tilde L$};
 	\draw (-\edge,0 )  node (e1up) [above] {$-3$};
 	\draw (0,0 )  node (middleup) [above] {$-2$};	
 	\draw (0,-\vedge )  node (down1) [right] {$-2$};
 	\draw (0, -2*\vedge)  node (down2) [right] {$-3$};
 	\draw (\edge,0)  node (e+1-up) [above] {$-2$};
 	\draw (\edge + \dashedge,0)  node (e-last-1-up) [above] {$-2$};
 	
 	\draw [thick, decoration={brace, mirror, raise=5pt},decorate] (\edge,0) -- (\edge + \dashedge,0);
 	\draw (\edge + 0.5*\dashedge,-0.5) node [text width= 5cm, align = center] (extranodes) {$r-1$ vertices of weight $-2$};
 			 	
\end{tikzpicture}
}
\caption{Dual graph of $\tilde E_{L,C_i,r}$}\label{fig:non-example-again}
\end{center}
\end{figure}
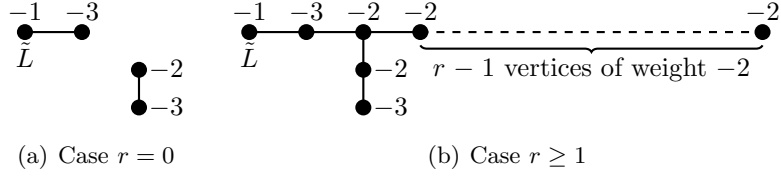

Recall that $C_i$'s are defined by $f_i = 0$, with $f_1 := v^5 - u^3$ and $f_2 := (v-u^2)^5 - u^3$. It follows that the Puiseux expansions in $u$ for each $C_i$ has only one Puiseux pair, namely $(3, 5)$. Moreover, each $f_i$ is a Weirstrass polynomial in $v$, so that we can use Propositions \ref{existence-answer-local} and \ref{effective-answer-Puiseux-1} to determine contractibility and algebraic contractibility of $\tilde E_{L,C_i,r}$.  \\

Identity \eqref{local-alpha} implies that $\alpha_{L,C_i,r} = r + 15$ for each $i = 1,2$, and therefore Proposition \ref{existence-answer-local} implies that $\tilde E_{L, C_i,r}$'s are contractible iff $r < 10$. We now determine if the contractions are algebraic. The weighted degree $\tilde \omega$ of Proposition \ref{effective-answer-Puiseux-1} is the same for both $i$'s, and it corresponds to weights $5$ for $u$ and $3$ for $v$. The $\tilde f$ of Proposition \ref{effective-answer-Puiseux-1} (computed from $f_i$'s) are as follows:
\begin{align*}
\tilde f_1 &= \begin{cases}
			0 & \text{if}\ r = 0,\\			
			v^5 - u^3 & \text{if}\ r\geq 1.
			\end{cases}
		&& &
\tilde f_2 &= \begin{cases}
			0 & \text{if}\ r = 0,\\			
			v^5 - u^3 & \text{if}\ 1 \leq r\leq 7, \\
			v^5 - u^3 - 5v^4u^2 & \text{if}\ 8 \leq r\leq 9.
			\end{cases}
\end{align*}			
Proposition \ref{effective-answer-Puiseux-1} therefore implies that $\tilde E_{L, C_1,r}$ is algebraically contractible for all $r < 10$, but $\tilde E_{L,C_2,r}$ is algebraically contractible only for $r \leq 7$. In particular, for $r = 8,9$, the contraction of $\tilde E_{L,C_2,r}$ produces a normal non-algebraic analytic surface. 
\end{example}

\subsection{The semigroup conditions on the sequence of {\em virtual poles}} \label{subsec-semigroup}

We continue to use the notations of the set-up of Subsection \ref{subsec-encoding}; in particular, we assume that the Puiseux expansion for $C$ is $v = \phi(u)$ with {\em Puiseux pairs} (Definition \ref{puiseuxnition}) $(\tilde q_1, p_1), \ldots, (\tilde q_{\tilde l},p_{\tilde l})$ with $\tilde l \geq 1$. Define $C_0 := L =  \{u = 0\}$, and for each $k$, $1 \leq k \leq \tilde l$, let $C_k$ be the curve-germ at $O$ with the Puiseux expansion $v = \phi_k(u)$, where $\phi_k(u)$ is the {\em Puiseux polynomial} (i.e.\ a Puiseux series with finitely many terms) consisting of all the terms of $\phi$ upto, but not including, the $k$-th characteristic exponent. Then it is a standard result (see e.g.\ \cite[Lemma 5.8.1]{casas-alvero}) that $\tilde \omega_k := (C, C_k)_O$, $0 \leq k \leq \tilde l$, are generators of the semigroup $\{(C, D)_O\}$ of intersection numbers at $O$, where $D$ varies among analytic curve-germs at $O$ not containing $C$. It follows from a straightforward computation that
\begin{subequations} \label{m-k-defn}
\begin{align}
\tilde \omega_0 &= p_1 \cdots p_{\tilde l},\quad \tilde \omega_1 = \tilde q_1p_2 \cdots p_{\tilde l},\ \text{and} \\
\tilde \omega_{k} &= p \left( (p_1 \cdots p_{k-1} - p_2 \cdots p_{k-1}) \frac{\tilde q_1}{p_1} + (p_2 \cdots p_{k-1} - p_3 \cdots p_{k-1}) \frac{\tilde q_2}{p_1p_2} \right. \notag  \\
		& \quad \qquad \left. + \cdots + (p_{k-1} - 1) \frac{\tilde q_{k-1}}{p_1 \cdots p_{k-1}}  + \frac{\tilde q_k}{p_1 \cdots p_k} \right),\ 2 \leq k \leq \tilde l. \label{m-k-general-defn}
\end{align}
\end{subequations} 

\begin{defn}[Virtual poles]\label{virtual-defn}
Let
\begin{align*}
l:= \begin{cases}
				\tilde l - 1 	& \text{if}\ r = 0, \\
				\tilde l		& \text{if}\ r > 0.
			\end{cases}
\end{align*}
The sequence of {\em virtual poles} at $O$ on $C$ are $\omega_0, \ldots, \omega_l$ defined as
\begin{subequations} \label{tilde-m-k-defn}
\begin{align}
\omega_0 &:= \tilde \omega_0,\quad 
\omega_1 := p_1 \cdots p_{\tilde l} - \tilde \omega_1,\quad
\omega_k := p_1^2 \cdots p_{k-1}^2 p_k \cdots p_{\tilde l} - \tilde \omega_k,\ 2 \leq k \leq l. \\
\intertext{The {\em generic virtual pole} at $O$ is}
\omega_{l+1} &:= \begin{cases}
							p_1^2 \cdots p_{\tilde l -1}^2 p_{\tilde l} - \tilde \omega_{\tilde l} = \frac{1}{p_{\tilde l}}\left(p^2 - \alpha_{L,C,r}\right)	& \text{if}\ r = 0, \\
							p_1^2 \cdots p_{\tilde l -1}^2 p_{\tilde l}^2 - p_{\tilde l}\tilde \omega_{\tilde l} - r = p^2 - \alpha_{L,C,r} 				& \text{if}\ r > 0.
						  \end{cases} 
\end{align}
\end{subequations}
\end{defn}

\begin{rem}
In Section \ref{essential-section} we define {\em essential key forms} corresponding to the {\em semidegree} associated to $E^*_{L,C,r}$. The $\omega_k$'s we defined here are precisely the orders of pole along $E^*_{L,C,r}$ of these essential key forms.
\end{rem}

Fix $k$, $1 \leq k \leq l$. The {\em semigroup conditions} for $k$ are:
\begin{gather}
 p_k \omega_k \in \zz_{\geq 0} \langle \omega_0, \ldots, \omega_{k-1}\rangle .  \tag{S1-k} \label{semigroup-criterion-1} \\
(\omega_{k+1}, p_k \omega_k) \cap \zz\langle \omega_0, \ldots, \omega_k \rangle = (\omega_{k+1}, p_k \omega_k) \cap \zz_{\geq 0}\langle \omega_0, \ldots, \omega_k \rangle,\ \tag{S2-k} \label{semigroup-criterion-2}
\end{gather}
where $(\omega_{k+1}, p_k \omega_k): = \{a \in \rr: \omega_{k+1} < a < p_k \omega_k\}$ and $\zz_{\geq 0}\langle \omega_0, \ldots, \omega_k \rangle$ (respectively, $\zz \langle \omega_0, \ldots, \omega_k \rangle$) denotes the semigroup (respectively, group) generated by linear combinations of $\omega_0, \ldots, \omega_k$ with non-negative integer (respectively, integer) coefficients. 
\begin{thm} \label{semigroup-prop}
Let $(\tilde q_1, p_1), \ldots, (\tilde q_{\tilde l},p_{\tilde l})$ be pairs of relatively prime positive integers with $p_k \geq 2$, $1 \leq k \leq \tilde l$, and $r$ be a non-negative integer. Let $l$ and $\omega_0, \ldots, \omega_{l+1}$ be as in Definition \ref{virtual-defn}. Assume $\omega_{l +1} > 0$ (so that $\tilde E_{L,C,r}$ is contractible for every curve $C$ with Puiseux pairs $(\tilde q_1, p_1), \ldots, (\tilde q_{\tilde l},p_{\tilde l})$). Then
\begin{enumerate}
\item \label{algebraic-existence} There exists a curve-germ $C$ at $O$ with Puiseux pairs $(\tilde q_1, p_1), \ldots, (\tilde q_{\tilde l},p_{\tilde l})$ such that $\tilde E_{L,C,r}$ is algebraically contractible, iff the semigroup condition \eqref{semigroup-criterion-1} holds for all $k$, $1 \leq k \leq l$.
\item \label{non-algebraic-existence} There exists a curve-germ $C$ at $O$ with Puiseux pairs $(\tilde q_1, p_1), \ldots, (\tilde q_{\tilde l},p_{\tilde l})$ such that $\tilde E_{L,C,r}$ is {\em not} algebraically contractible, iff either \eqref{semigroup-criterion-1} or \eqref{semigroup-criterion-2} {\em fails} for some $k$, $1 \leq k \leq l$.
\end{enumerate}
\end{thm}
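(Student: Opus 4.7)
The plan is to use the effective criterion of Theorem \ref{key-thm-effective} to translate algebraic contractibility into a statement about the integrality (membership in $\cc[x,y]$) of essential key forms, and then combine this with the recursive structure of Algorithm \ref{essential-algorithm} to match it up with the semigroup conditions. Since, as remarked after Definition \ref{virtual-defn}, the virtual poles $\omega_k$ are precisely the orders of pole along $E^*_{L,C,r}$ of the essential key forms, the gross algebraic data of each $g_k$ is determined by $\omega_0,\ldots,\omega_{l+1}$; the freedom in $C$ (beyond its Puiseux pairs) amounts to choices of coefficients that do not alter the virtual poles but can modify the non-leading monomials of each key form.

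For Assertion \ref{algebraic-existence}, I would prove both directions via Theorem \ref{key-thm-effective}. For the ``only if'' direction, if some $C$ gives an algebraic contraction then every $g_k$ lies in $\cc[x,y]$; the recursive step of Algorithm \ref{essential-algorithm} expresses $g_k^{p_k}$ modulo lower $\delta_k$-order terms as a $\cc^*$-multiple of a monomial $x^{a_0}g_1^{a_1}\cdots g_{k-1}^{a_{k-1}}$ with $\sum_i a_i\omega_i = p_k\omega_k$, and for $g_{k+1}$ to stay in $\cc[x,y]$ this forces $a_0 \geq 0$, which is exactly \eqref{semigroup-criterion-1}. For the ``if'' direction I would inductively build $C$ by choosing the coefficients of the Puiseux series $\phi$ one by one so that at each stage the $g_{k+1}$ produced by Algorithm \ref{essential-algorithm} remains in $\cc[x,y]$; \eqref{semigroup-criterion-1} furnishes the required non-negative decomposition of $p_k\omega_k$, and Theorem \ref{key-thm-effective} then delivers algebraic contractibility of $\tilde E_{L,C,r}$.

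For Assertion \ref{non-algebraic-existence}, by Assertion \ref{algebraic-existence} I may assume \eqref{semigroup-criterion-1} holds at every $k$ (otherwise no $C$ is algebraic and we are done). The task is then to show that, under \eqref{semigroup-criterion-1}, existence of some $C$ with non-algebraic contraction is equivalent to failure of \eqref{semigroup-criterion-2} at some $k$. The monomials of $g_{k+1}$ with $\delta_k$-value in the open interval $(\omega_{k+1}, p_k\omega_k)$ arise from intermediate reductions in Algorithm \ref{essential-algorithm} and depend on the non-characteristic coefficients of $\phi$; I would show that such a monomial can be forced to have strictly negative $x$-exponent by a generic choice of $\phi$ precisely when some element of $(\omega_{k+1},p_k\omega_k) \cap \zz\langle\omega_0,\ldots,\omega_k\rangle$ fails to lie in $\zz_{\geq 0}\langle\omega_0,\ldots,\omega_k\rangle$, and conversely that when \eqref{semigroup-criterion-2} holds at every $k$ every admissible $C$ yields $g_k \in \cc[x,y]$ throughout. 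The principal obstacle will be this last genericity analysis: tracking through the iterations of Algorithm \ref{essential-algorithm} to identify which coefficient of $\phi$ controls each intermediate monomial of $g_{k+1}$ and verifying that, when \eqref{semigroup-criterion-2} fails, the responsible coefficient is not automatically zero. This should follow from the graded-ring interpretation of the semidegrees $\delta_k$ of \eqref{delta-k} that underpins Proposition \ref{key-prop}.
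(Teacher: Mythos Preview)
Your proposal follows essentially the same route as the paper: reduce via Theorem \ref{key-thm-effective} to polynomiality of the essential key forms, and then read off the semigroup conditions from the recursion in Algorithm \ref{essential-algorithm}. For Assertion \ref{algebraic-existence} your argument matches the paper's almost verbatim.

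The one place worth flagging is the $(\Leftarrow)$ direction of Assertion \ref{non-algebraic-existence}. You frame this as a genericity problem (``forced by a generic choice of $\phi$'', ``verifying the responsible coefficient is not automatically zero'') and anticipate needing the graded-ring machinery behind Proposition \ref{key-prop}. The paper sidesteps all of this: rather than a generic $\phi$, it takes the \emph{sparsest possible} one. When \eqref{semigroup-criterion-2} fails, it picks the largest bad value $\tilde\omega \in (\omega_{k+1}, p_k\omega_k)$, sets $r' := p_k\omega_k - \tilde\omega$, and uses the series
\[
\tilde\phi_{\vec a_1}(x,\xi) = x^{q_1/p_1} + \cdots + x^{q_k/(p_1\cdots p_k)} + x^{(q_k - r')/(p_1\cdots p_k)} + x^{q_{k+1}/(p_1\cdots p_{k+1})} + \cdots + \xi x^{q_{l+1}/(p_1\cdots p_{l+1})},
\]
i.e.\ only the characteristic terms plus a single extra term at the offending exponent. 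With this choice the run of Algorithm \ref{essential-algorithm} is short enough to inspect by hand, and one sees directly that $f_{k+1}$ picks up a monomial with negative $x$-exponent. No cancellation analysis or graded-ring argument is needed. Your approach would also work, but the explicit construction is cleaner and avoids exactly the obstacle you identified.
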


We prove the theorem in Section \ref{sec-proof}.
\begin{rem}
Since the weighted dual graph of $\tilde E_{L,C,r}$ for a curve-germ at $C$ at $O$ is completely determined by $r$ and the Puiseux pairs of $C$, Theorem \ref{semigroup-prop} completely characterizes which weighted dual graphs (corresponding to $\tilde E_{L,C,r}$ of Question \ref{contract-2-question}) correspond to only algebraic, only non-algebraic, or both types of contractions.
\end{rem}

\begin{rem}[`Explanation' of the term `virtual poles'] \label{curve-semigroup-remark}
In the set up of Question \ref{contract-2-question}, identify $\pp^2 \setminus L$ with $\cc^2$, so that $(x,y) := (1/u, v/u)$ is a system of coordinates on $\cc^2$. The terminology `virtual poles' for $\omega_0, \ldots, \omega_{l}$ is motivated by the last assertion of the following result which is a reformulation of a fundamental result of the theory of plane algebraic curves with one place at infinity. 
\end{rem}

\begin{thm} [{\cite{abhya-moh-tschirnhausen}, \cite{abhyankar-expansion}, \cite{abhyankar-semigroup}, \cite{sathaye-stenerson}}] \label{curve-thm}
Consider the set up of Theorem \ref{semigroup-prop} and set $p_{\tilde l+1} := 1$. The semigroup condition \eqref{semigroup-criterion-1} is satisfied for all $k$, $1 \leq k \leq l$, iff there exists a curve $C'$ in $\cc^2$ such that $C'$ has only one place at infinity and has a Puiseux expansion at the point at infinity with Puiseux pairs $(\tilde q_1, p_1), \ldots, (\tilde q_{l}, p_{l})$. Moreover, if $C'$ exists, then $\omega_0/p_{l+1}, \ldots, \omega_{l}/p_{l+1}$ are the generators of the {\em semigroup of poles at infinity} on $C'$.
\end{thm}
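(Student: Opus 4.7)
The plan is to reduce Theorem \ref{curve-thm} to the classical semigroup theorem for plane algebraic curves with one place at infinity, established in \cite{abhya-moh-tschirnhausen,abhyankar-expansion,abhyankar-semigroup,sathaye-stenerson}. In its standard formulation, that theorem asserts that a sequence $(d_0,\ldots,d_h)$ of positive integers arises as the minimal set of generators of the semigroup of pole orders at infinity of some plane curve with one place at infinity if and only if $\gcd(d_0,\ldots,d_h)=1$ and $n_k d_k\in\zz_{\geq 0}\langle d_0,\ldots,d_{k-1}\rangle$ for $1\leq k\leq h$, where $n_k:=\gcd(d_0,\ldots,d_{k-1})/\gcd(d_0,\ldots,d_k)$. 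Moreover, the Puiseux pairs $(\tilde q'_k,p'_k)$ at infinity of the realising curve are determined by (and in turn determine) the pair $(d_k,n_k)$ via $p'_k=n_k$ and an explicit formula for $\tilde q'_k$ in terms of $d_0,\ldots,d_k$.

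The task is therefore to set up the dictionary $\bar\omega_k:=\omega_k/p_{l+1}$ for $0\leq k\leq l$ and verify both that the classical conditions on $(\bar\omega_k)$ coincide with \eqref{semigroup-criterion-1} on $(\omega_k)$, and that the Puiseux pairs assigned by the classical formula to $(\bar\omega_k)$ are precisely $(\tilde q_1,p_1),\ldots,(\tilde q_l,p_l)$. A direct induction on $k$ using \eqref{m-k-defn}, \eqref{tilde-m-k-defn} and the coprimality of each $(\tilde q_j,p_j)$ gives
\[
\gcd(\bar\omega_0,\ldots,\bar\omega_k) \;=\; p_{k+1}\cdots p_l \qquad (0\leq k\leq l),
\]
with the empty product equal to $1$. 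In particular $\gcd(\bar\omega_0,\ldots,\bar\omega_l)=1$ and the classical integer $n_k$ evaluates to $p_k$. Multiplying the classical condition $n_k\bar\omega_k\in\zz_{\geq 0}\langle\bar\omega_0,\ldots,\bar\omega_{k-1}\rangle$ through by the positive integer $p_{l+1}$ then gives exactly \eqref{semigroup-criterion-1}. Substituting $d_k=\bar\omega_k$ and $n_k=p_k$ into the classical formula for $\tilde q'_k$ and simplifying using \eqref{m-k-defn}--\eqref{tilde-m-k-defn} yields $\tilde q'_k=\tilde q_k$.

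The factor $p_{l+1}$ has the geometric meaning of the multiplicity of intersection of $C'$ with $L$ at the unique place of $C'$ at infinity: when $r>0$ this multiplicity is $1$ (and indeed $p_{l+1}=1$), whereas when $r=0$ it is $p_{\tilde l}$ (and $p_{l+1}=p_{\tilde l}$). Once the dictionary is in place, both directions of the equivalence and the final assertion follow at once from the classical theorem: Sathaye--Stenerson's realisation gives the ``if'' direction together with the statement that the $\bar\omega_k$ are the semigroup generators, and Abhyankar--Moh's necessity gives the ``only if'' direction.

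The main obstacle is the bookkeeping in the two explicit verifications above --- the inductive gcd formula and the recovery of $\tilde q_k$ from $(\bar\omega_k)$ --- both of which stem from the subtraction $p_1^2\cdots p_{k-1}^2 p_k\cdots p_{\tilde l}-\tilde\omega_k$ in definition \eqref{tilde-m-k-defn}. This subtraction encodes the ``duality'' between local behaviour at $O\in\pp^2$ (measured by $\tilde\omega_k$) and behaviour at infinity on $\cc^2=\pp^2\setminus L$ (measured by $\omega_k$), arising from the coordinate change $(x,y)=(1/u,v/u)$; tracking its effect through the classical formulas is the only delicate part, and is exactly what justifies calling Theorem \ref{curve-thm} a reformulation rather than an independent result.
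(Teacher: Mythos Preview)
The paper does not give its own proof of Theorem \ref{curve-thm}: it is stated as a reformulation of classical results and attributed directly to \cite{abhya-moh-tschirnhausen,abhyankar-expansion,abhyankar-semigroup,sathaye-stenerson}, with no argument supplied. Your proposal --- setting up the dictionary $\bar\omega_k=\omega_k/p_{l+1}$, verifying via the gcd computation (which is exactly Assertion~\ref{gcd-omega} of Proposition~\ref{essential-prop}) that the classical $n_k$ equals $p_k$, and then reading off both the semigroup condition and the Puiseux pairs --- is precisely the translation that justifies calling the theorem a reformulation, and is correct.
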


In the situation of \ref{curve-thm}, the numbers $\omega_k$, $0 \leq k \leq l$, are usually denoted in the literature by $\delta_k$, $0 \leq k \leq l$, and are called the {\em $\delta$-sequence} of $C'$.\\

For positive integers $q,p$, and a curve-germ $C$ at $O$, we say that $C$ is of $(q,p)$-type with respect to $(u,v)$-coordinates iff $C$ has a Puiseux expansion $v = \phi(u)$ such that $(q,p)$ is the only Puiseux pair of $\phi$. The following result is a straightforward corollary of Theorem \ref{semigroup-prop} and the fact (which is a special case of \cite[Proposition 2.1]{herzog}) that the greatest integer not belonging to $\zz_{\geq 0}\langle p, p-q \rangle$ is $ p(p-q) - p - (p-q)$. 
	
\begin{cor} \label{cor-l=1}
Let $p,q$ be positive relatively prime integers and $r$ be a non-negative integer. 
\begin{enumerate}
\item \label{contractibly-q-p} Let $C$ be a $(q,p)$-type curve germ at $O$ with respect to $(u,v)$-coordinates. Then $\tilde E_{L,C,r}$ is contractible iff $r < p(p-q)$.
\item \label{algebraicity-q-p} There is a $(q,p)$-type curve germ $C$ at $O$ with respect to $(u,v)$-coordinates such that $\tilde E_{L,C,r}$ is contractible, but {\em not} algebraically contractible, iff $2p -q < r < p(p-q)$. \qed
\end{enumerate}
\end{cor}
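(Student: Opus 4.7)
The plan is to specialize Proposition \ref{existence-answer-local} and Theorem \ref{semigroup-prop} to the single-Puiseux-pair case $\tilde l = 1$, $\tilde q_1 = q$, $p_1 = p$, write down the virtual poles of Definition \ref{virtual-defn} explicitly, and then invoke the quoted Frobenius--Sylvester fact to decide the semigroup condition \eqref{semigroup-criterion-2}.

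For assertion \ref{contractibly-q-p}, substitution into \eqref{local-alpha} (the computation is exactly the one already performed in Example \ref{non-example-again}) gives $\alpha_{L,C,r} = pq + r$. Proposition \ref{existence-answer-local} then equates contractibility with the conjunction of $\ord_u(\phi) = q/p < 1$ and $pq + r < p^2$. Since $r \geq 0$, the second inequality already forces $p(p - q) > 0$, hence $q < p$, so the whole condition collapses to $r < p(p - q)$.

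For assertion \ref{algebraicity-q-p}, I would compute the virtual poles. If $r = 0$ then $l = 0$, no index $k$ satisfies $1 \leq k \leq l$, so both \eqref{semigroup-criterion-1} and \eqref{semigroup-criterion-2} are vacuous and Theorem \ref{semigroup-prop}(\ref{non-algebraic-existence}) supplies no non-algebraic $C$; this is consistent since $2p - q \geq p + 1 > 0$ rules $r = 0$ out of the asserted range. If $r > 0$ then $l = 1$ and Definition \ref{virtual-defn} gives
\[
\omega_0 = p, \qquad \omega_1 = p - q, \qquad \omega_2 = p^2 - pq - r = p(p - q) - r.
\]
Condition \eqref{semigroup-criterion-1} for $k = 1$ reads $p(p - q) \in \zz_{\geq 0}\langle p\rangle$, trivially true. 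Since $\gcd(p, p - q) = 1$ we have $\zz\langle p, p - q\rangle = \zz$, so \eqref{semigroup-criterion-2} for $k = 1$ demands that every integer in the open interval $(p(p - q) - r,\, p(p - q))$ lie in $\zz_{\geq 0}\langle p, p - q\rangle$. By the cited fact, the largest integer \emph{missing} from this numerical semigroup is $N := p(p - q) - p - (p - q) = p(p - q) - (2p - q)$, and every integer exceeding $N$ belongs to it. Hence \eqref{semigroup-criterion-2} fails exactly when $N$ lies in that interval, i.e.\ when $p(p - q) - r < N$, equivalently when $r > 2p - q$. Intersecting with the contractibility range $r < p(p-q)$ from part \ref{contractibly-q-p} and applying Theorem \ref{semigroup-prop}(\ref{non-algebraic-existence}) yields the asserted equivalence $2p - q < r < p(p - q)$. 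There is no substantive obstacle here; the argument is bookkeeping once the virtual poles are written out, the one point of care being the degenerate edge case $r = 0$ where $l$ drops to zero and both semigroup conditions become vacuous.
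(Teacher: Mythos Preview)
Your argument is correct and is exactly the approach the paper intends: it states the corollary as a straightforward consequence of Theorem \ref{semigroup-prop} together with the Frobenius number fact $p(p-q)-p-(p-q)$ for $\zz_{\geq 0}\langle p,p-q\rangle$, and you have faithfully carried out precisely that computation, including the $r=0$ edge case where $l$ drops to $0$ and the semigroup conditions become vacuous.
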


\begin{rem} \label{1-puiseux-remark}
In fact, if $2p -q < r < p(p-q)$, Proposition \ref{effective-answer-Puiseux-1} gives an easy recipe to construct a curve $C$ such that $E_{L,C,r}$ is contractible, but {\em not} algebraically contractible; e.g.\ the curve given by $(v - f(u))^p = u^q$ would suffice for any polynomial $f(u) \in \cc[u]$ such that the coefficient of $u^2$ in $f(u)$ is non-zero. In Examples \ref{non-example} and \ref{non-example-again} we considered the case $(q,p) = (3,5)$ and took $f(u) := u^2$. 
\end{rem}

\begin{example}[Dual graphs arising from {\em only} non-algebraic contractions]\label{non-2-remexample}
If $(\tilde q_1,p_1), (\tilde q_2, p_2)$ are pairs of relatively prime positive integers such that $p_1, p_2 \geq 2$, $\tilde q_1 < p_1$ and 
\begin{align} \label{non-2-eqn}
\tilde q_2 = (p_1-\tilde q_1)(p_2-1)(p_1-1) + p_1(p_2+1),
\end{align}
then the `fact' stated preceding Corollary \ref{cor-l=1} implies that the condition \eqref{semigroup-criterion-1} fails for $k = 2$ and therefore Theorem \ref{semigroup-prop} implies that if $C$ is any curve germ at $O$ such that its Puiseux expansion in $u$ has Puiseux pairs $(\tilde q_1, p_1),(\tilde q_2,p_2)$ then $\tilde E_{L,C,r}$ for $r = 1$ corresponds only to non-algebraic analytic contractions. Setting $(\tilde q_1,p_1) = (3,5)$ and $p_2 = 2$ in equation \eqref{non-2-eqn} gives $\tilde q_2 = 23$. Figure \ref{fig:non-2-example} depicts the dual graph of $\tilde E_{L,C,1}$ for a curve $C$ with Puiseux pairs $\{(3,5),(23,2)\}$ (for its Puiseux expansion in $u$).

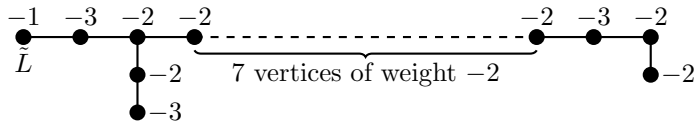
\begin{figure}[htp]
\begin{center}
\begin{tikzpicture}[scale=1, font = \small] 	
 	\pgfmathsetmacro\dashedge{4.5}	
 	\pgfmathsetmacro\edge{.75}
 	\pgfmathsetmacro\vedge{.5}

 	\draw[thick] (-2*\edge,0) -- (\edge,0);
 	\draw[thick] (0,0) -- (0,-2*\vedge);
 	\draw[thick, dashed] (\edge,0) -- (\edge + \dashedge,0);
 	\draw[thick] (\edge + \dashedge,0) -- (3*\edge + \dashedge,0);
 	\draw[thick] (3*\edge + \dashedge,0) -- (3*\edge + \dashedge,-\vedge);
 	
 	\fill[black] ( - 2*\edge, 0) circle (3pt);
 	\fill[black] (-\edge, 0) circle (3pt);
 	\fill[black] (0, 0) circle (3pt);
 	\fill[black] (0, -\vedge) circle (3pt);
 	\fill[black] (0, - 2*\vedge) circle (3pt);
 	\fill[black] (\edge, 0) circle (3pt);
 	\fill[black] (\edge + \dashedge, 0) circle (3pt);
 	\fill[black] (2*\edge + \dashedge, 0) circle (3pt);
 	\fill[black] (3*\edge + \dashedge, 0) circle (3pt);
 	\fill[black] (3*\edge + \dashedge, -\vedge) circle (3pt);
 	
 	\draw (-2*\edge,0)  node (e0up) [above] {$-1$};
 	\draw (-2*\edge,0 )  node (e0down) [below] {$\tilde L$};
 	\draw (-\edge,0 )  node (e1up) [above] {$-3$};
 	\draw (0,0 )  node (middleup) [above] {$-2$};	
 	\draw (0,-\vedge )  node (down1) [right] {$-2$};
 	\draw (0, -2*\vedge)  node (down2) [right] {$-3$};
 	\draw (\edge,0)  node (e+1-up) [above] {$-2$};
 	\draw (\edge + \dashedge,0)  node (e-last-1-up) [above] {$-2$};
 	\draw (2*\edge + \dashedge,0)  node [above] {$-3$};
 	\draw (3*\edge + \dashedge,0)  node [above] {$-2$};
 	\draw (3*\edge + \dashedge,-\vedge)  node [right] {$-2$};
 	
 	\draw [thick, decoration={brace, mirror, raise=5pt},decorate] (\edge,0) -- (\edge + \dashedge,0);
 	\draw (\edge + 0.5*\dashedge,-0.5) node [text width= 5cm, align = center] (extranodes) {$7$ vertices of weight $-2$};
 			 	
\end{tikzpicture}

\caption{A dual graph of $\tilde E_{L,C,r}$ which corresponds to only non-algebraic contractions}\label{fig:non-2-example}
\end{center}
\end{figure}

\end{example}

\section{Background: valuations, key polynomials, semidegrees and key forms} \label{sec-defn}
In this section we define the terms used in Theorem \ref{key-thm-effective} and list the background material we use for its proof. In Subsection \ref{pre-key-section} we introduce `key polynomials' and `generic Puiseux series' associated to discrete valuations on the field of rational functions on $\cc^2$. In Subsection \ref{key-section} we translate the notions of Subsection \ref{pre-key-section} into the language of {\em semidegrees} (which are simply negative of valuations) which is more convenient in dealing with valuations {\em centered at infinity}. This means in particular that instead of Puiseux series and key polynomials, we work with `degree-wise Puiseux series' and `key forms'. In Subsection \ref{essential-section} we introduce `essential key forms' - these are a sub-collection of key forms which by themselves contain all the information about the semidegree (or equivalently, the valuation) and list some of their properties we use in the sequel. We also describe the algorithm to compute essential key forms of a semidegree from the `generic degree-wise Puiseux series'. Finally, in Subsection \ref{degree-like-section} we recall briefly the language of {\em degree-like functions} and corresponding compactifications - we use only a very simple result (namely Proposition \ref{basic-prop}) from this subsection.

\subsection{Puiseux series and Key Polynomials corresponding to valuations} \label{pre-key-section}

\begin{defn}[Divisorial valuations] \label{divisorial-defn}
Let $u, v$ be polynomial coordinates on $X' \cong \cc^2$. A {\em discrete valuation} on $\cc(u,v)$ is a map $\nu: \cc(u,v)\setminus\{0\} \to \zz$, such that for all $f,g \in \cc(u,v)\setminus \{0\}$,
\begin{compactenum}
\item $\nu(f+g) \geq \min\{\nu(f), \nu(g)\}$,
\item $\nu(fg) = \nu(f) + \nu(g)$.
\end{compactenum}
Let $\bar X'$ be an algebraic compactification of $X'$. A discrete valuation $\nu$ on $\cc(u,v)$ is called {\em divisorial} iff there exists a normal algebraic surface $Y$ equipped with a birational morphism $\sigma: Y \to \bar X$ and a curve $C_\nu$ on $Y$ such that for all non-zero $f \in \cc[x,y]$, $\nu(f)$ is the order of vanishing of $\sigma^*(f)$ along $C_\nu$. The {\em center} of $\nu$ on $\bar X'$ is $\sigma(C_\nu)$. 
\end{defn}

Let $u,v$ be as in Definition \ref{divisorial-defn} and $\nu$ be a divisorial valuation on $\cc(u,v)$ with $\nu(u) >0$ and $\nu(v) > 0$. We recall two of the standard ways of representing a valuation: by a Puiseux series and by {\em key polynomials} \cite{maclane-key}. 

\begin{defn}[Meromorphic Puiseux series] \label{puiseuxnition}
Recall that the ring of Puiseux series in $u$ is 
$$\bigcup_{p=1}^\infty \cc[[u^{1/p}]] = \left\{\sum_{k=0}^\infty a_k u^{k/p} : p \in \zz,\ p \geq 1\right\}.$$
The field of {\em Meromorphic Puiseux series} is the quotient field of the ring of Puiseux series, i.e.\ the following field: 
$$\bigcup_{p=1}^\infty \cc((u^{1/p})) = \left\{\sum_{k=k_0}^\infty a_k u^{k/p} : k_0, p \in \zz,\ p \geq 1\right\},$$
where we denoted by $\cc((u^{1/p}))$ the field of Laurent series in $u^{1/p}$. Let $\phi$ be a meromorphic Puiseux series in $u$. The {\em polydromy order} \cite[Chapter 1]{casas-alvero} of $\phi$ is the smallest positive integer $p$ such that $\phi \in \cc((u^{1/p}))$. For any $r \in \qq$, let us denote by $[\phi]_{<r}$ (resp.\ $[\phi]_{\leq r}$) sum of all terms of $\phi$ with order less than (resp.\ less than or equal to) $r$. Then the {\em Puiseux pairs} of $\phi$ are the unique sequence of pairs of relatively prime integers $(q_1, p_1), \ldots, (q_k,p_k)$ such that the polydromy order of $\phi$ is $p_1\cdots p_k$, and for all $j$, $1 \leq j \leq k$,
\begin{compactenum}
\item $p_j \geq 2$,
\item $[\phi]_{<\frac{q_j}{p_1\cdots p_j}} \in \cc((u^{\frac{1}{p_0\cdots p_{j-1}}}))$ (where we set $p_0 := 1$), and 
\item $[\phi]_{\leq \frac{q_j}{p_1\cdots p_j}} \not\in \cc((u^{\frac{1}{p_0\cdots p_{j-1}}}))$.
\end{compactenum}
The {\em characteristic exponents} of $\phi$ are $q_1/p_1, q_2/(p_1p_2), \ldots, q_k/(p_1 \cdots p_k)$. 
\end{defn}
\begin{prop}[Valuation via Puiseux series: reformulation of {\cite[Proposition 4.1]{favsson-tree}}] \label{valseux}
There exists a {\em Puiseux polynomial} (i.e.\ a Puiseux series with finitely many terms) $\phi_\nu$ in $u$ and a rational number $r_\nu$ such that for all $f \in \cc[u,v]$, 
\begin{align}
\nu(f) = \nu(u)\ord_u\left( f(u,v)|_{v = \phi_\nu(u) + \xi u^{r_\nu}}\right), \label{favsson-puiseux}
\end{align}
where $\xi$ is an indeterminate. 
\end{prop}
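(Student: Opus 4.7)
The plan is to realize $\nu$ geometrically as the divisorial valuation attached to an exceptional curve produced by a sequence of blowups over $O := \{u = v = 0\}$, and then to read off $\phi_\nu$ and $r_\nu$ from the Puiseux expansion of a generic \emph{curvette} meeting that curve transversally. Since $\nu(u) > 0$ and $\nu(v) > 0$, the center of $\nu$ on $X' := \spec\cc[u,v]$ is $O$. By the classical structure theorem for divisorial valuations on smooth surfaces, there is a proper birational morphism $\pi\colon X \to X'$ which is a finite composition of point blowups starting at $O$, such that $\nu = \ord_E$ for a prime exceptional divisor $E \subseteq X$; I would take $\pi$ minimal with this property.

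Pick a general point $P \in E$ --- off every other exceptional component of $\pi$ and off the strict transform of $\{u=0\}$ --- and let $D$ be a smooth analytic curve germ at $P$ meeting $E$ transversally. The image $\pi(D) \subseteq X'$ is an analytically irreducible germ at $O$ not contained in $\{u = 0\}$, so it admits a Puiseux expansion $v = \psi(u)$. By the standard dictionary (see e.g.\ \cite[Chapter 5]{casas-alvero}) between the infinitely near points in the support of $\pi$ and the characteristic exponents of $\psi$, the initial segment of $\psi$ up to and including the last exponent pinned down by $\pi$ forms a Puiseux polynomial $\phi_\nu(u)$ depending only on $\nu$, and the next coefficient of $\psi$ is a regular parameter on an open chart of $E$. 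Writing $\xi$ for this coefficient and $r_\nu$ for its exponent, we have $\psi(u) = \phi_\nu(u) + \xi\, u^{r_\nu} + h(u)$ with $\ord_u(h) > r_\nu$.

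To verify the formula, let $p$ be the polydromy order of $\psi$ and parametrize $\pi(D)$ by $u = s^p$, $v = \psi(s^p)$. Decomposing $\pi^*\DIV(f) = \widetilde{\{f=0\}} + \sum_i \nu_i(f)\, E_i$ as a sum over the exceptional components $E_i$, the local intersection at $O$ is
\[
(\pi(D),\DIV(f))_O = (D \cdot \pi^* \DIV(f))_P = \nu(f),
\]
since for a sufficiently general $P$ the germ $D$ is disjoint from the strict transform of $\{f = 0\}$ and from every $E_i$ except $E$, while $D \cdot E = 1$ by transversality. On the other hand $(\pi(D),\DIV(f))_O = \ord_s f(s^p, \psi(s^p)) = p \cdot \ord_u f(u, \psi(u))$. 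Specialising to $f = u$ yields $p = \nu(u)$, hence $\nu(f) = \nu(u)\cdot \ord_u f(u, \psi(u))$. Finally, since $f$ has bounded degree in $v$, the contribution of $h(u)$ to $f(u, \psi(u))$ appears only at orders in $u$ strictly above the leading order of $f(u, \phi_\nu + \xi u^{r_\nu})$, where $\xi$ may be treated as a generic indeterminate ruling out accidental cancellations; hence $\ord_u f$ is unchanged upon truncating $\psi$ to $\phi_\nu + \xi u^{r_\nu}$.

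The main obstacle will be the second step: namely, verifying the precise correspondence between the composition $\pi$ of blowups and the characteristic/intermediate exponents of $\psi$, and confirming that $\phi_\nu$ and $r_\nu$ are invariants of $\nu$ (independent of the generic $P$ and smooth $D$) while the coefficient $\xi$ traces out a Zariski-dense open subset of $\cc$. This is a standard but bookkeeping-heavy piece of the dictionary between infinitely near points and Puiseux data; once it is in hand, the remaining steps are short intersection-theoretic computations and an elementary expansion argument.
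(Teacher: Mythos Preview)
The paper does not supply its own proof of this proposition: it is stated as a reformulation of \cite[Proposition 4.1]{favsson-tree} and used as background. So there is nothing in the paper to compare your argument against directly.

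That said, your outline is a correct and standard route to the result. The curvette approach---realising $\nu$ as $\ord_E$ for an exceptional divisor $E$ on a sequence of blowups, pushing down a generic transversal germ, and computing $\nu(f)$ as the local intersection number $(\pi(D),\Div f)_O$---is exactly how one usually passes between divisorial valuations and Puiseux data in the plane. Your identification $p=\nu(u)$ and the identity $\nu(f)=\nu(u)\cdot\ord_u f(u,\psi(u))$ for a generic curvette $\psi$ are both fine.

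Two points deserve a bit more care than you give them. First, the passage from ``generic $\xi_0\in\cc$'' to ``$\xi$ an indeterminate'' in the last paragraph: writing $f(u,\phi_\nu+w)=\sum_k a_k(u)w^k$ and substituting $w=\xi u^{r_\nu}+h(u)$ with $\ord_u(h)>r_\nu$, you should check that the leading $u$-order is $\min_k(\ord_u a_k + k r_\nu)$ and that the corresponding coefficient is a nonzero polynomial in $\xi$; then genericity of $\xi_0$ makes the truncation harmless. Your sentence about ``bounded degree in $v$'' gestures at this but does not quite nail it. Second, and you already flag this, the claim that $\phi_\nu$ and $r_\nu$ depend only on $\nu$ (not on the choice of $P$ or $D$) is where the real bookkeeping lives; it is the content of the dictionary between the sequence of infinitely near base points of $\pi$ and the characteristic exponents of the curvette, as in \cite[Chapter~5]{casas-alvero}. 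Once that is in hand the rest is short.
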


\begin{defn} \label{generic-puiseux-nition}
If $\phi_\nu$ and $r_\nu$ are as in Proposition \ref{valseux}, we say that $\tilde \phi_\nu(u,\xi):= \phi_\nu(x) + \xi u^{r_\nu}$ is the {\em generic Puiseux series} associated to $\nu$. 
\end{defn}

\begin{defn}[{Key Polynomials of \cite{maclane-key} after \cite[Chapter 2]{favsson-tree}}] \label{key-pol-defn}
Let $\nu$ be as above. A sequence of polynomials $\tilde g_0, \tilde g_1, \ldots, \tilde g_{n+1} \in \cc[u,v]$ is called the sequence of {\em key polynomials} for $\nu$ if the following properties are satisfied:
\begin{compactenum}
\addtocounter{enumi}{-1}
\item $\tilde g_0 = u$, $\tilde g_1 = v$.
\item \label{semigroup-val-property} Let $\tilde \omega_j := \nu(\tilde g_j)$, $0 \leq j \leq n+1$. Then 
\begin{align*}
\tilde \omega_{j+1} > \alpha_j \tilde \omega_j = \sum_{i = 0}^{j-1}\beta_{j,i}\tilde \omega_i\ \text{for}\ 1 \leq j \leq n,
\end{align*}
where $\alpha_j \in \zz_{> 0}$ and $\beta_{j,i} \in \zz_{\geq 0}$ satisfy
\begin{gather*}
\alpha_j = \min\{\alpha \in \zz_{> 0}:\alpha\tilde \omega_j \in \zz \tilde \omega_0 + \cdots + \zz \tilde \omega_{j-1}\}\ \text{for}\ 1 \leq j \leq n,\ \text{and}\\
0 \leq \beta_{j,i} < \alpha_i\ \text{for}\ 1 \leq i < j \leq n.
\end{gather*}

\item For $1 \leq j \leq n$, there exists $\theta_j \in \cc^*$ such that 
\begin{align*}
\tilde g_{j+1} = \tilde g_j^{\alpha_j} - \theta_j \tilde g_0^{\beta_{j,0}} \cdots \tilde g_{j-1}^{\beta_{j,j-1}}.
\end{align*}

\item \label{pre-generating-property} Let $u_0, \ldots, u_{n+1}$ be indeterminates and $\tilde \omega$ be the {\em weighted order} on $\cc[u_0, \ldots, u_{n+1}]$ corresponding to weights $\tilde \omega_j$ for $u_j$, $0 \leq j \leq n+1$ (i.e.\ the value of $\tilde \omega$ on a polynomial is the smallest `weight' of its monomials). Then for every polynomial $f \in \cc[u,v]$, 
\begin{align*}
\nu(f) = \max\{\tilde \omega(F): F \in \cc[u_0, \ldots, u_{n+1}],\ F(\tilde g_0, \ldots, \tilde g_{n+1}) = f\}.
\end{align*}
\end{compactenum} 
\end{defn}

\begin{thm}[{\cite[Theorem 2.29]{favsson-tree}}] \label{key-pol-thm}
There is a unique and finite sequence of key polynomials for $\nu$. 
\end{thm}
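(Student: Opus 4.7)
The plan is to construct the sequence $\{\tilde g_j\}$ inductively, with each choice forced by the previous data, and then to verify termination using the structure of $\nu$ as a divisorial valuation. Start with $\tilde g_0 := u$ and $\tilde g_1 := v$, and suppose inductively that $\tilde g_0, \ldots, \tilde g_j$ have been constructed together with $\tilde \omega_i := \nu(\tilde g_i)$, $0 \leq i \leq j$. Since $\nu$ has value group $\zz$, the set of $\alpha \in \zz_{>0}$ with $\alpha \tilde \omega_j \in \zz \tilde \omega_0 + \cdots + \zz \tilde \omega_{j-1}$ is nonempty; define $\alpha_j$ to be its minimum. The decomposition $\alpha_j \tilde \omega_j = \sum_{i<j} \beta_{j,i}\tilde \omega_i$ with $0 \leq \beta_{j,i} < \alpha_i$ is then forced and unique: reduce modulo $\alpha_{j-1}$ to pin down $\beta_{j,j-1}$ (minimality of $\alpha_{j-1}$ gives uniqueness of its residue class), subtract, and iterate downward.

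Both $\tilde g_j^{\alpha_j}$ and $M_j := \tilde g_0^{\beta_{j,0}} \cdots \tilde g_{j-1}^{\beta_{j,j-1}}$ have $\nu$-value equal to $\alpha_j\tilde \omega_j$. A comparison of their initial forms in the associated graded algebra $\mathrm{gr}_\nu \cc[u,v]$ (a graded domain of transcendence degree one over $\cc$) shows that these initial forms are $\cc$-proportional; equivalently, after substituting the generic Puiseux series $\tilde \phi_\nu(u,\xi)$ of Proposition \ref{valseux} into $v$, the leading $u$-terms of $\tilde g_j(u,\tilde \phi_\nu)^{\alpha_j}$ and $M_j(u,\tilde \phi_\nu)$ are $\xi$-polynomials of equal $u$-order and must coincide up to a scalar. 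Hence there is a unique $\theta_j \in \cc^*$ with $\nu(\tilde g_j^{\alpha_j} - \theta_j M_j) > \alpha_j \tilde \omega_j$; set $\tilde g_{j+1} := \tilde g_j^{\alpha_j} - \theta_j M_j$. Since $\alpha_j$, the $\beta_{j,i}$, and $\theta_j$ are each uniquely determined, so is the sequence.

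For termination, I compare the $\tilde g_j$'s with $\tilde \phi_\nu$ directly. An induction shows that the roots of $\tilde g_j$ (as a polynomial in $v$) agree with $\tilde \phi_\nu$ up to terms of $u$-order $\tilde \omega_j / \nu(u)$, so the $j$-th key polynomial captures the Puiseux expansion of $\nu$ through (roughly) the $j$-th characteristic exponent of $\tilde \phi_\nu$. Because $\tilde \phi_\nu$ is a Puiseux \emph{polynomial} with finitely many characteristic exponents plus a generic tail $\xi u^{r_\nu}$, the process must halt after some $n+1$ steps. At this stage any $f \in \cc[u,v]$ can be rewritten as a polynomial $F$ in $\tilde g_0, \ldots, \tilde g_{n+1}$ with $\tilde \omega(F) = \nu(f)$, since by construction no further cancellation at level $\nu(f)$ remains available; this is the generating property.

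The main obstacle will be making the termination and the generating property rigorous in parallel: both rest on careful bookkeeping of how successive key polynomials encode increasingly long truncations of $\tilde \phi_\nu$, and on showing that once the last characteristic exponent plus tail of $\tilde \phi_\nu$ has been absorbed no further correction is possible. The proportionality of initial forms in $\mathrm{gr}_\nu \cc[u,v]$ underlying the existence of $\theta_j$ is the other delicate point; it relies essentially on divisoriality (transcendence degree one of $\mathrm{gr}_\nu$) to force exactly the needed collapse at each level.
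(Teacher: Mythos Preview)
The paper does not supply its own proof of this theorem: it is stated as a citation to \cite[Theorem 2.29]{favsson-tree} and no argument is given. So there is nothing in the paper to compare your proposal against.

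That said, your sketch follows the standard route (MacLane's original inductive construction, as reorganized in Favre--Jonsson). A few points of caution if you intend to flesh it out. First, the proportionality of the initial forms of $\tilde g_j^{\alpha_j}$ and $M_j$ in $\mathrm{gr}_\nu\cc[u,v]$ is the crux, and ``transcendence degree one'' alone is not quite enough: you need that the images of $\tilde g_0,\ldots,\tilde g_j$ in the graded ring satisfy no relation of $\tilde\omega$-value below $\alpha_j\tilde\omega_j$ other than the ones already accounted for by previous key polynomials---this is what guarantees the leading coefficients (as functions of $\xi$) are nonzero constants rather than genuine polynomials in $\xi$. Second, your termination argument conflates the number of key polynomials with the number of characteristic exponents; in fact there can be many more key polynomials than Puiseux pairs (one for each integer-exponent term between characteristic exponents, as in Example~\ref{non-example-aagain}), so the finiteness really comes from $\tilde\phi_\nu$ being a Puiseux \emph{polynomial}, i.e.\ having finitely many terms in total. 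Third, the generating property (Property~\ref{pre-generating-property}) requires a separate inductive argument that the ``$\tilde g$-adic expansion'' of any $f$ computes $\nu(f)$ exactly; this is where most of the work in the cited reference lies, and your last paragraph does not yet contain it.
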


\begin{example}
If $\nu$ is the multiplicity valuation at the origin, then the generic Puiseux series corresponding to $\nu$ is $\tilde \phi_\nu = \xi u$ and the key polynomials are $u,v$.
\end{example}

\begin{example}
If $\nu$ is the weighted order in $(u,v)$-coordinates corresponding to weights $p$ for $u$ and $q$ for $v$ with $p,q$ positive integers, then $\tilde \phi_\nu = \xi u^{q/p}$ and the key polynomials are again $u,v$.
\end{example}

\begin{example} \label{general-example}
Let $C$ be a singular irreducible analytic curve-germ at the origin with Puiseux expansion $v = \phi(u)$. Pick any non-negative integer $r$ and construct $E^*_{L,C,r}$ (with $L := \{u=0\}$) as in Question \ref{contract-2-question}. Then the generic Puiseux series associated to the valuation $\nu$ corresponding to $E^*_{L,C,r}$ is
\begin{align*} 
\tilde \phi_\nu(x,\xi) &:= [\phi(u)]_{< (\tilde q+r)/p} + \xi u^{(\tilde q+r)/p},\quad \text{where}\\
p &:= \text{the polydromy order of $\phi$,} \\
\tilde q/p &:= \text{the {\em last} characteristic exponent of $\phi$,}\\
[\phi(u)]_{< (\tilde q+r)/p} &:= \text{sum of all terms of $\phi(u)$ with order less than $(\tilde q+r)/p$.}
\end{align*}
\end{example}

\begin{example} \label{non-example-aagain}
Let $C_1$ and $C_2$ be the curves from Example \ref{non-example-again}. We apply the construction of Example \ref{general-example} to $C_1$ and $C_2$. The Puiseux expansion for $C_1$ and $C_2$ at the origin are respectively given by: $v = u^{3/5}$ and $v= u^{3/5} + u^2$. It follows that the generic Puiseux series for the valuation of Example \ref{general-example} applied to $C_i$'s are:
\begin{align*}
\tilde \phi_{\nu_1} &= \begin{cases}
			\xi u^{3/5} & \text{if}\ r = 0,\\			
			u^{3/5} + \xi u^{(3+r)/5} & \text{if}\ r\geq 1.
			\end{cases}
		&& &
\tilde \phi_{\nu_2} &= \begin{cases}
			\xi u^{3/5} & \text{if}\ r = 0,\\			
			u^{3/5} + \xi u^{(3+r)/5} & \text{if}\ 1 \leq r\leq 7, \\
			u^{3/5} + u^2 + \xi u^{(3+r)/5} & \text{if}\ 8 \leq r.
			\end{cases}
\end{align*}		
The sequence of key polynomials for $\nu_1$ and $\nu_2$ for $0 \leq r < 10$ are as follows:
\begin{align*}
\parbox{2cm}{key polynomials for $\nu_1$} 
		 &= \begin{cases}
			u,v & \text{if}\ r = 0,\\			
			u,v, v^5 - u^3 & \text{if}\ r\geq 1.
			\end{cases}
		&& &
\parbox{2cm}{key polynomials for $\nu_2$}
		 &= \begin{cases}
			u,v & \text{if}\ r = 0,\\			
			u,v,v^5 - u^3 & \text{if}\ 1 \leq r\leq 7, \\
			u,v, v^5 - u^3, v^5 - u^3 - 5v^4u^2 & \text{if}\ 8 \leq r\leq 9.
			\end{cases}
\end{align*}			
In particular, note that for $r \geq 1$ the last key polynomials are precisely the $\tilde f_i$'s of Example \ref{non-example-again}. This is in fact the key observation in our proof of Proposition \ref{effective-answer-Puiseux-1} in Section \ref{sec-proof}.
\end{example}

\subsection{Degree-wise Puiseux series and Key forms corresponding to semidegrees} \label{key-section}
\begin{defn}[Degree-wise Puiseux series]
The field of {\em degree-wise Puiseux series} in $x$ is 
$$\dpsxc := \bigcup_{p=1}^\infty \cc((x^{-1/p})) = \left\{\sum_{j \leq k} a_j x^{j/p} : k,p \in \zz,\ p \geq 1 \right\},$$
where for each integer $p \geq 1$, $\cc((x^{-1/p}))$ denotes the field of Laurent series in $x^{-1/p}$. In particular $\phi(x)$ is a degree-wise Puiseux series in $x$ iff $\phi(1/x)$ is a meromorphic Puiseux series in $x$. The notions regarding meromorphic Puiseux series introduced in Definition \ref{puiseuxnition} extend naturally to the setting of degree-wise Puiseux series. In particular, if $\phi$ is a degree-wise Puiseux series in $x$, then the {\em Puiseux pairs} of $\phi$ are $(-q_1, p_1), \ldots, (-q_l, p_l)$, where $(q_1, p_1), \ldots, (q_l, p_l)$ are the Puiseux pairs of $\phi(1/x)$; similarly, the {\em characteristic exponents} of $\phi$ are $-q_k/(p_1 \cdots p_k)$ for $1 \leq k \leq l$ and the {\em polydromy order} of $\phi$ is the smallest positive integer $p$ such that the exponents of $x$ in all terms of $\phi$ are of the form $q/p$, $q \in \zz$. Let $\phi = \sum_{q \leq q_0} a_q x^{q/p}$, where $p$ is the polydromy order of $\phi$. Then the {\em conjugates} of $\phi$ are $\phi_j := \sum_{q \leq q_0} a_q \zeta^q x^{q/p}$, $1 \leq j \leq p$, where $\zeta$ is a primitive $p$-th root of unity. The usual factorization of polynomials in terms of Puiseux series implies the following
\end{defn}

\begin{thm} \label{dpuiseux-factorization}
Let $f \in \cc[x,y]$. Then there are unique (up to conjugacy) degree-wise Puiseux series $\phi_1, \ldots, \phi_k$, a unique non-negative integer $m$ and $c \in \cc^*$ such that
$$f = cx^m \prod_{i=1}^k \prod_{\parbox{1.75cm}{\scriptsize{$\phi_{ij}$ is a con\-ju\-ga\-te of $\phi_i$}}}\mkern-27mu \left(y - \phi_{ij}(x)\right)$$
\end{thm}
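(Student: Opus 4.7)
My plan is to reduce Theorem~\ref{dpuiseux-factorization} directly to the classical Newton--Puiseux factorization theorem via the substitution $u = 1/x$. Concretely, let $N := \deg_x(f)$ and define $F(u,y) := u^N f(1/u, y) \in \cc[u,y]$. Since $\phi(x)$ is a degree-wise Puiseux series in $x$ iff $\psi(u) := \phi(1/u)$ is a meromorphic Puiseux series in $u$ (and the conjugacy relation is preserved by this bijection), any factorization of $F$ in terms of meromorphic Puiseux series in $u$ corresponds, under $u = 1/x$, to a factorization of $f$ in terms of degree-wise Puiseux series in $x$.

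Next, I would invoke the classical Newton--Puiseux theorem: the algebraic closure of $\cc((u))$ is precisely $\bigcup_{p \geq 1} \cc((u^{1/p}))$, the field of meromorphic Puiseux series in $u$. Viewing $F$ as a polynomial in $y$ with leading coefficient $B(u) \in \cc[u]$, I factor it over this algebraic closure as
\[
F(u, y) = B(u) \prod_{j=1}^d (y - \psi_j(u)),
\]
where $d := \deg_y(F) = \deg_y(f)$ and the roots $\psi_j$ group into conjugacy classes under $u^{1/p} \mapsto \zeta u^{1/p}$, each class corresponding to an irreducible factor of $F$ in $\cc((u))[y]$. Substituting $u = 1/x$ and setting $\phi_j(x) := \psi_j(1/x)$ transforms this into
\[
f(x, y) = x^N B(1/x) \prod_{i=1}^k \prod_{\phi_{ij} \text{ conj.\ of } \phi_i} (y - \phi_{ij}(x)),
\]
where the $\phi_i$ index the distinct conjugacy classes. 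The prefactor $x^N B(1/x)$ equals the leading coefficient of $f$ as a polynomial in $y$, which I write in the form $c x^m$ with $c \in \cc^*$ and $m \geq 0$.

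Uniqueness follows from the fact that $\dpsxc[y]$ is a UFD: since $\dpsxc$ is a field (being the algebraic closure of $\cc((x^{-1}))$), any two factorizations of $f$ into linear factors $(y - \phi)$ with $\phi \in \dpsxc$ agree up to permutation. Hence the conjugacy classes $\phi_1, \ldots, \phi_k$ are uniquely determined as the Galois orbits of the roots of $f(x, \cdot)$ in $\dpsxc$, and the integer $m$ together with the constant $c$ are pinned down by the leading coefficient of $f$ in $y$. The main bookkeeping point worth double-checking is the identification of the prefactor $x^N B(1/x)$ with $cx^m$ and the compatibility of conjugacy under $u \leftrightarrow 1/x$; apart from this, the argument is a transcription of Newton--Puiseux from the local setting at $u = 0$ to the setting at infinity $x = \infty$.
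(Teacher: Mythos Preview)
Your approach is exactly what the paper intends: the sentence immediately preceding Theorem~\ref{dpuiseux-factorization} reads ``The usual factorization of polynomials in terms of Puiseux series implies the following,'' and no further proof is given. Your reduction via $u = 1/x$ to the classical Newton--Puiseux theorem is the standard transcription and matches the paper's implicit argument.

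The bookkeeping point you flag is, however, a genuine gap --- not in your strategy but in the statement itself. For an arbitrary $f \in \cc[x,y]$ the leading coefficient in $y$ is a general polynomial $a_d(x) \in \cc[x]$, and $x^N B(1/x) = a_d(x)$ need not be a monomial $cx^m$. For instance, $f(x,y) = (x+1)y$ cannot be written as $cx^m(y - \phi(x))$ with $c \in \cc^*$, $m \in \zz_{\geq 0}$, $\phi \in \dpsxc$. So as literally stated the theorem requires the extra hypothesis that the leading $y$-coefficient of $f$ is a monomial (equivalently, that $V(f)$ has no ``vertical'' branches at finite points). In the only places the paper actually uses this factorization (Remark--Notation~\ref{f-phi} and identity~\eqref{f-decomposition} in the proof of Proposition~\ref{key-prop}), the polynomials in question satisfy this automatically --- in \eqref{f-decomposition} the hypothesis that every branch of $V(f)$ at infinity has a parametrization of the form~\eqref{dpuiseux-param} forces $a_d(x)$ to be a nonzero constant --- so the imprecision is harmless for the paper. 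But you are right not to pretend the identification $x^N B(1/x) = cx^m$ goes through in general.
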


\begin{defn}[cf.\ Definition \ref{sub-defn}] \label{semi-defn}
Let $x,y$ be indeterminates. A {\em divisorial semidegree} on $\cc(x,y)$ is a map $\delta : \cc(x,y)\setminus\{0\} \to \zz$ such that $-\delta$ is a divisorial valuation.
\end{defn}
Let $\delta$ be a divisorial semidegree on $\cc(x,y)$ such that $\delta(x) > 0$. Set $u := 1/x$ and $v := y/x^k$ for some $k$ such that $\delta(y) < k\delta(x)$. Applying Proposition \ref{valseux} to $\nu := -\delta$ and $\cc(u,v)$ and then translating in terms of $(x,y)$-coordinates yields the following result. 

\begin{prop}[{\cite[Theorem 1.2]{sub2-1}}] \label{valdeg}
There exists a {\em degree-wise Puiseux polynomial} (i.e.\ a degree-wise Puiseux series with finitely many terms) $\phi_\delta \in \dpsxc$ and a rational number $r_\delta < \ord_x(\phi_\delta)$ such that for every polynomial $f \in \cc[x,y]$, 
\begin{align}
\delta(f) = \delta(x)\deg_x\left( f(x,y)|_{y = \phi_\delta(x) + \xi x^{r_\delta}}\right), \label{phi-delta-defn}
\end{align}
where $\xi$ is an indeterminate. 
\end{prop}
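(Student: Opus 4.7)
The plan is to deduce this proposition from its valuation-theoretic counterpart, Proposition~\ref{valseux}, via the birational change of variables $u := 1/x$, $v := y/x^k$. Since $\delta(x) > 0$, I may pick an integer $k$ with $k\delta(x) > \delta(y)$; then $\nu := -\delta$ is a divisorial valuation on $\cc(u,v) = \cc(x,y)$ with $\nu(u) = \delta(x) > 0$ and $\nu(v) = k\delta(x) - \delta(y) > 0$. Proposition~\ref{valseux} therefore produces a Puiseux polynomial $\phi_\nu(u)$ and a rational number $r_\nu$ (which, via the standard normalization of the generic Puiseux series recalled in Definition~\ref{generic-puiseux-nition}, I may take to satisfy $r_\nu > \deg_u \phi_\nu$) such that
\begin{align*}
\nu(g) \;=\; \nu(u)\,\ord_u\!\bigl(g(u,\, \phi_\nu(u) + \xi u^{r_\nu})\bigr) \qquad \text{for every } g \in \cc[u,v].
\end{align*}

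Next, given $f \in \cc[x,y]$, expand $f = \sum_{i,j} c_{ij} x^i y^j$ and set $M := \max\{\, i + jk : c_{ij} \neq 0 \,\}$. Then $g(u,v) := u^M f(1/u,\, v/u^k)$ lies in $\cc[u,v]$, and $\nu(g) = M \nu(u) - \delta(f) = M\delta(x) - \delta(f)$. Setting $\phi_\delta(x) := x^k \phi_\nu(1/x)$ and $r_\delta := k - r_\nu$, a direct substitution gives the key identity
\begin{align*}
g\!\bigl(u,\, \phi_\nu(u) + \xi u^{r_\nu}\bigr) \;=\; x^{-M}\, f\!\bigl(x,\, \phi_\delta(x) + \xi x^{r_\delta}\bigr).
\end{align*}
The right-hand side is a degree-wise Puiseux polynomial in $x$ (with coefficients in $\cc[\xi]$), so the elementary identity $\ord_u = -\deg_x$ applies to it; taking $\ord_u$ of both sides and combining with $\nu(g) = M\delta(x) - \delta(f)$ yields precisely \eqref{phi-delta-defn}.

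Finally, I verify the shape conditions on $\phi_\delta$ and $r_\delta$. Writing $\phi_\nu = \sum_i a_i u^{q_i}$ as a finite sum with rational $q_i \geq 0$, we obtain $\phi_\delta = \sum_i a_i x^{k - q_i}$, a finite sum of rational powers of $x$, hence a degree-wise Puiseux polynomial. Letting $q^* := \deg_u \phi_\nu = \max_i q_i$, we have $\ord_x \phi_\delta = k - q^*$, so the normalization $r_\nu > q^*$ translates to $r_\delta = k - r_\nu < k - q^* = \ord_x \phi_\delta$, as required.

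The whole argument is mostly bookkeeping once Proposition~\ref{valseux} is at our disposal; the only genuine point requiring care is the choice of $M$, which simultaneously ensures that $g$ is a polynomial in $(u,v)$ (so that Proposition~\ref{valseux} applies to it) and that the right-hand side of the key identity above is an object on which $\ord_u = -\deg_x$ can be legitimately invoked.
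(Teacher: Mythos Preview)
Your proof is correct and follows exactly the approach the paper indicates in the sentence preceding Proposition~\ref{valdeg}: set $u:=1/x$, $v:=y/x^k$ with $k\delta(x)>\delta(y)$, apply Proposition~\ref{valseux} to $\nu:=-\delta$, and translate back. You have carefully filled in the bookkeeping the paper leaves implicit; the one point not literally stated in Definition~\ref{generic-puiseux-nition} is the normalization $r_\nu>\deg_u\phi_\nu$, but this is standard (any term of $\phi_\nu$ with exponent $\geq r_\nu$ can be absorbed into the generic $\xi u^{r_\nu}$ without affecting~\eqref{favsson-puiseux}) and is the form used throughout the paper's examples.
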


\begin{defn} \label{generic-deg-wise-defn}
If $\phi_\delta$ and $r_\delta$ are as in Proposition \ref{valdeg}, we say that $\tilde \phi_\delta(x,\xi):= \phi_\delta(x) + \xi x^{r_\delta}$ is the {\em generic degree-wise Puiseux series} associated to $\delta$. Let the Puiseux pairs of $\phi_\delta$ be $(q_1, p_1), \ldots, (q_l,p_l)$. Express $r_\delta$ as $q_{l+1}/(p_1 \cdots p_lp_{l+1})$ where $p_{l+1} \geq 1$ and $\gcd(q_{l+1}, p_{l+1}) = 1$. Then the {\em formal Puiseux pairs} of $\tilde \phi_\delta$ are $(q_1, p_1), \ldots, (q_{l+1},p_{l+1})$. Note that 
\begin{enumerate}
\item $\delta(x) = p_1 \cdots p_{l+1}$,
\item it is possible that $p_{l+1} = 1$ (as opposed to other $p_k$'s, which are always $\geq 2$). 
\end{enumerate}
\end{defn}

We will need the following geometric interpretation of generic degree-wise Puiseux series: let $X := \cc^2$ with coordinates $(x,y)$, $\bar X$ be a normal analytic compactification of $X$ with an irreducible curve $C_\infty$ at infinity and $\delta$ be the order of pole along $C_\infty$. Let $\bar X^0 \cong \pp^2$ be the compactification of $X$ induced by the map $(x,y) \mapsto [1:x:y]$, $\sigma: \bar X \dashrightarrow \bar X^0$ be the natural bimeromorphic map, and $S$ (resp.\ $S'$) be the finite set of points of indeterminacy of $\sigma$ (resp.\ $\sigma^{-1})$. Assume that $\sigma$ maps $C_\infty \setminus S$ to a point $O \in L_\infty := \bar X^0 \setminus X$. It then follows that $\sigma^{-1}$ maps $L_\infty \setminus S'$ to a point $P_\infty \in C_\infty$.

\begin{prop}[{\cite[Proposition 4.2]{sub2-1}}] \label{prop-param}
Let $\tilde \phi_\delta(x,\xi)$ be the generic degree-wise Puiseux series associated to $\delta$ and $\gamma$ be an (analytically) irreducible curve-germ at $O$ (on $\bar X^0$) which is distinct from the germ of $L_\infty$. Then the strict transform of $\gamma$ on $\bar X$ intersects $C_\infty \setminus \{P_\infty\}$ iff $\gamma \cap X$ (i.e.\ the {\em finite} part of $\gamma$) has a parametrization of the form 
\begin{align}
t \mapsto (t, \tilde \phi_\delta(t,\xi)|_{\xi = c} + \lot)\quad \text{for}\ |t| \gg 0 \label{dpuiseux-param} \tag{$*$}
\end{align}
for some $c \in \cc$, where $\lot$ means `lower order terms' (in $t$).
\end{prop}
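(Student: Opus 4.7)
The plan is to translate the geometric assertion that the strict transform $\tilde\gamma$ of $\gamma$ meets $C_\infty$ away from $P_\infty$ into a condition on the degree-wise Puiseux expansion of $\gamma$ at infinity in $(x,y)$-coordinates, and then match this expansion against $\tilde\phi_\delta$ using the defining property recorded in Proposition \ref{valdeg}.

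\textbf{Step 1 (rewriting $\gamma$ as a degree-wise Puiseux series at infinity).} Since $O\in L_\infty$ has local coordinates $(u,v)=(1/x,y/x^k)$ with $L_\infty=\{u=0\}$, any irreducible germ $\gamma\neq L_\infty$ at $O$ admits a Puiseux parametrization $v=\psi(u)$ with $\ord_u\psi\ge 0$. Substituting back, the finite part $\gamma\cap X$ is parametrized, on unwinding the polydromy, by $t\mapsto(t,\Phi_j(t))$ for $|t|\gg 0$, where $\Phi(x):=x^k\psi(1/x)\in\dpsxc$ and $\Phi_j$ ranges over the conjugates of $\Phi$. So the statement reduces to: $\tilde\gamma$ meets $C_\infty\setminus\{P_\infty\}$ iff some $\Phi_j$ agrees with $\phi_\delta(x)+c\,x^{r_\delta}$ modulo terms of degree $<r_\delta$, for some $c\in\cc$.

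\textbf{Step 2 (main lever).} The crucial input is the identity $\delta(f)=\delta(x)\deg_x f\bigl(x,\phi_\delta(x)+\xi x^{r_\delta}\bigr)$ of Proposition \ref{valdeg}, which holds for \emph{generic} $\xi$. For any $\Phi\in\dpsxc$, $\deg_x f(x,\Phi(x))$ equals the order of pole at $t=\infty$ of the restriction of $f$ to the curve $(t,\Phi(t))$; combined with the identity, this characterizes $\tilde\phi_\delta(x,\xi)$ as the unique (up to the free parameter $\xi$) degree-wise Puiseux series whose substitution detects the whole of $\delta$. The two directions of the proposition correspond to precisely when this detection survives specialization $\xi=c$.

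\textbf{Step 3 (two implications and the obstacle).} For ($\Leftarrow$), if $\Phi_j=\phi_\delta(x)+c\,x^{r_\delta}+\text{l.o.t.}$, expand an arbitrary $f\in\cc[x,y]$ in the key-polynomial basis; the strict inequalities $\tilde\omega_{j+1}>\alpha_j\tilde\omega_j$ of Definition \ref{key-pol-defn} ensure that the $\delta$-leading monomial survives the specialization $\xi\mapsto c$, giving $\deg_t f(t,\Phi_j(t))=\delta(f)/\delta(x)$ for every $f$. Therefore $f\mapsto-\deg_t f(t,\Phi_j(t))$ is a positive multiple of $-\delta$, which forces the analytic arc $(t,\Phi_j(t))$ to approach a point of $C_\infty$ distinct from $P_\infty$ (by the very definition of $P_\infty$ as the $\sigma^{-1}$-image of a generic point of $L_\infty$). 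For ($\Rightarrow$), assume $\tilde\gamma$ meets $C_\infty$ at $Q\neq P_\infty$. Picking a local equation $g$ of $C_\infty$ at $Q$ and computing along $\tilde\gamma$ shows that for every $f\in\cc[x,y]$, $-\deg_t f(t,\Phi_j(t))=I(Q;\tilde\gamma,C_\infty)\cdot(-\delta(f))$ for the branch $\Phi_j$ corresponding to $Q$; equality is therefore achieved in the Proposition \ref{valdeg} formula under specialization of $\xi$ to the $r_\delta$-th coefficient of $\Phi_j$, and by the uniqueness of $\tilde\phi_\delta$ modulo that free parameter, $\Phi_j$ matches $\phi_\delta+c\,x^{r_\delta}$ up to lower order. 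The main obstacle is the identification of $P_\infty$ in the ($\Rightarrow$) direction: one must show that branches whose Puiseux expansion does not match $\tilde\phi_\delta$ up to order $r_\delta$ are forced to land at $P_\infty$ on $\bar X$. This is most cleanly handled by induction on the length of the key-polynomial sequence, tracking through the factorization of $\sigma$ into successive blow-ups how the ``generic direction'' at each infinitely near point passes to the next key polynomial.
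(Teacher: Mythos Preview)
The paper does not prove Proposition~\ref{prop-param}: it is imported verbatim from \cite[Proposition~4.2]{sub2-1}, so there is no in-paper argument to compare your sketch against.

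On the substance of your sketch: the overall strategy---rewrite $\gamma$ via a degree-wise Puiseux expansion and feed it into the identity of Proposition~\ref{valdeg}---is the natural one, but Step~3 has a real error in the $(\Leftarrow)$ direction. You assert that the key-polynomial inequalities force $\deg_t f(t,\Phi_j(t)) = \delta(f)/\delta(x)$ for \emph{every} $f\in\cc[x,y]$. This is false: any $f$ vanishing on $\gamma$ gives $\deg_t f(t,\Phi_j(t)) = -\infty$, and less trivially, taking $f$ to be the last essential key form $f_{l+1}$ (when it is a polynomial) and choosing $c$ so that the coefficient $b_{l+1}\xi_{l+1}$ of Assertion~\ref{f-k-substitution} of Proposition~\ref{essential-prop} is killed already gives a strict drop. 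What Proposition~\ref{valdeg} actually yields after specializing $\xi\mapsto c$ is only the inequality $\deg_t f(t,\Phi_j(t)) \le \delta(f)/\delta(x)$; this suffices to place the center of the curve valuation on $C_\infty$, but does \emph{not} by itself distinguish $P_\infty$ from other points of $C_\infty$. So the ``obstacle'' you isolate for $(\Rightarrow)$---identifying which branches are forced to $P_\infty$---is in fact needed for both directions, not just one.

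The clean way to close this gap is the one you gesture at in your last sentence: pass to a common resolution $\tilde Y$ dominating both $\bar X$ and $\bar X^0$, so that $C_\infty$ is resolved into $E^*$ together with a chain of curves contracting to $P_\infty$. Then the Puiseux condition \eqref{dpuiseux-param} is exactly the statement that the strict transform of $\gamma$ on $\tilde Y$ meets $E^*$ at a point away from its intersection with that chain, and this is a purely local computation in the tower of blow-ups (indexed by the formal Puiseux pairs of $\tilde\phi_\delta$). Your inductive scheme on the key-polynomial length is the right bookkeeping device for this; you just need to run it for both implications rather than only for~$(\Rightarrow)$.
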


Now we adapt the notion of key polynomials to the case of semidegrees. Let $(u,v)$ be as in the paragraph preceding Proposition \ref{valdeg} and $\tilde g_0 = u, \tilde g_1 = v, \tilde g_2, \ldots, \tilde g_{n+1} \in \cc[u,v]$ be the key polynomials of $-\delta$. Define $g_0 := x$ and
\begin{align}
g_j(x,y) := x^{k\deg_{v}(\tilde g_j)}\tilde g_j(1/x,y/x^k) \in \cc[x,x^{-1},y],	\quad 1 \leq j \leq n+1. \label{form-from-pol}
\end{align}
The properties of key polynomials imply that $g_0, \ldots, g_{n+1}$ have the analogous properties of {\em key forms} of Definition \ref{key-defn} below. That these are {\em unique} key forms of $\delta$ follows from Theorem \ref{key-thm} which is a straightforward corollary of Theorem \ref{key-pol-thm}. The main difference between key polynomials and the key forms is that the latter may {\em not} be polynomials (hence the word `form'\footnote{We use the word `form' in particular because of the following reason: consider the map $\pi$ and the weighted degree $\omega$ of Property \ref{generating-property} of Definition \ref{key-defn}. Then the key forms are in fact images under $\pi$ of certain {\em weighted homogeneous forms} (with respect to $\omega$) in $B$.} instead of `polynomial') - see Remark \ref{generating-remark} and Example \ref{non-example-aaagain}.

\begin{defn}[Key forms] \label{key-defn}
Let $\delta$ be a divisorial semidegree on $\cc[x,y]$ such that $\delta(x) > 0$. A sequence of elements $g_0, g_1, \ldots, g_{n+1} \in \cc[x,x^{-1},y]$ is called the sequence of {\em key forms} for $\delta$ if the following properties are satisfied:
\begin{compactenum}
\let\oldenumi\theenumi
\renewcommand{\theenumi}{P\oldenumi}
\addtocounter{enumi}{-1}
\item $g_0 = x$, $g_1 = y$.
\item \label{semigroup-property} Let $\omega_j := \delta(g_j)$, $0 \leq j \leq n+1$. Then 
\begin{align*}
\omega_{j+1} < \alpha_j \omega_j = \sum_{i = 0}^{j-1}\beta_{j,i}\omega_i\ \text{for}\ 1 \leq j \leq n,
\end{align*}
where 
\begin{compactenum}
\item $\alpha_j = \min\{\alpha \in \zz_{> 0}: \alpha\omega_j \in \zz \omega_0 + \cdots + \zz \omega_{j-1}\}$ for $1 \leq j \leq n$,
\item $\beta_{j,i}$'s are integers such that $0 \leq \beta_{j,i} < \alpha_i$ for $1 \leq i < j \leq n$ (in particular, $\beta_{j,0}$'s are allowed to be {\em negative}). 
\end{compactenum}

\item \label{next-property} For $1 \leq j \leq n$, there exists $\theta_j \in \cc^*$ such that 
\begin{align*}
g_{j+1} = g_j^{\alpha_j} - \theta_j g_0^{\beta_{j,0}} \cdots g_{j-1}^{\beta_{j,j-1}}.
\end{align*}

\item \label{generating-property} Let $y_1, \ldots, y_{n+1}$ be indeterminates and $\omega$ be the {\em weighted degree} on $B := \cc[x,x^{-1},y_1, \ldots, y_{n+1}]$ corresponding to weights  $\omega_0$ for $x$ and $\omega_j$ for $y_j$, $0 \leq j \leq n+1$ (i.e.\ the value of $\omega$ on a polynomial is the maximum `weight' of its monomials). Then for every polynomial $g \in \cc[x,x^{-1},y]$, 
\begin{align}
\delta(g) = \min\{\omega(G): G(x,y_1, \ldots, y_{n+1}) \in B,\ G(x,g_1, \ldots, g_{n+1}) = g\}. \label{generating-eqn}
\end{align}
\end{compactenum} 
\end{defn}

\begin{thm} \label{key-thm}
There is a unique and finite sequence of key forms for $\delta$. 
\end{thm}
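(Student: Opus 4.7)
The theorem is a direct translation of Theorem~\ref{key-pol-thm} via the substitution behind \eqref{form-from-pol}. Fix $k \in \zz_{>0}$ with $k\delta(x) > \delta(y)$ and set $u := 1/x$, $v := y/x^k$; then $\nu := -\delta$ is a divisorial valuation on $\cc(u,v)$ with $\nu(u), \nu(v) > 0$, and Theorem~\ref{key-pol-thm} yields a unique finite sequence of key polynomials $\tilde g_0 = u,\ \tilde g_1 = v,\ \tilde g_2, \ldots, \tilde g_{n+1}$ for $\nu$, with associated data $\tilde\omega_j = \nu(\tilde g_j)$, $d_j = \deg_v(\tilde g_j)$, $\alpha_j$, and $\beta'_{j,i} \in \zz_{\geq 0}$ (the $\beta_{j,i}$ of Definition~\ref{key-pol-defn}). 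Set $g_0 := x$ and define $g_j$ via \eqref{form-from-pol} for $1 \leq j \leq n+1$; the plan is to verify that this finite sequence is the unique sequence of key forms for $\delta$.

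\emph{Existence.} I would check (P0)--(P3) by direct translation. The basic identity, coming from $g_j = u^{-kd_j}\tilde g_j(u,v)$ and $\delta = -\nu$, reads
\begin{align*}
\omega_0 = \tilde\omega_0, \qquad \omega_j := \delta(g_j) = kd_j\tilde\omega_0 - \tilde\omega_j \quad (1 \leq j \leq n+1).
\end{align*}
Combined with $d_{j+1} = \alpha_j d_j$ (an easy induction from the key polynomial recursion), it implies $\zz\omega_0 + \cdots + \zz\omega_{j-1} = \zz\tilde\omega_0 + \cdots + \zz\tilde\omega_{j-1}$, so the $\alpha_j$'s of Definitions~\ref{key-pol-defn} and~\ref{key-defn} coincide and $\tilde\omega_{j+1} > \alpha_j\tilde\omega_j$ becomes $\omega_{j+1} < \alpha_j\omega_j$, yielding (P1). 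Substituting $\tilde g_i(1/x, y/x^k) = x^{-kd_i}g_i$ (for $i \geq 1$) and $\tilde g_0(1/x,y/x^k) = 1/x$ into the recursion $\tilde g_{j+1} = \tilde g_j^{\alpha_j} - \theta_j\tilde g_0^{\beta'_{j,0}}\cdots\tilde g_{j-1}^{\beta'_{j,j-1}}$, multiplying through by $x^{kd_{j+1}}$, and using $\alpha_j d_j = \sum_{i=1}^{j-1}\beta'_{j,i}d_i$ (which holds because the leading $v$-terms of both products must cancel) produces
\begin{align*}
g_{j+1} = g_j^{\alpha_j} - \theta_j g_0^{-\beta'_{j,0}} g_1^{\beta'_{j,1}} \cdots g_{j-1}^{\beta'_{j,j-1}},
\end{align*}
so (P2) holds with $\beta_{j,0} := -\beta'_{j,0} \in \zz$ (possibly negative, as permitted) and $\beta_{j,i} := \beta'_{j,i}$ for $i \geq 1$. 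For (P3), the monomial correspondence $u_0 \leftrightarrow 1/x$, $u_j \leftrightarrow x^{-kd_j}y_j$ converts weighted orders on $\cc[u_0,\ldots,u_{n+1}]$ into weighted degrees on $\cc[x,x^{-1},y_1,\ldots,y_{n+1}]$ up to a uniform shift, so any $F \in \cc[u_0,\ldots,u_{n+1}]$ with $F(\tilde g_0,\ldots,\tilde g_{n+1}) = f$ corresponds to a $G$ with $G(x,g_1,\ldots,g_{n+1}) = x^{kd}f(1/x, y/x^k)$ satisfying $\omega(G) = -\tilde\omega(F) + kd\,\omega_0$, and extremality transfers.

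\emph{Uniqueness and main obstacle.} Suppose $(g_j')_{0 \leq j \leq m+1}$ is another sequence of key forms for $\delta$. Choose $k$ large enough that each $\tilde g_j' := u^{kd_j'}g_j'(1/u, v/u^k)$ lies in $\cc[u,v]$ (with $d_j' := \deg_y(g_j')$); running the existence argument in reverse shows that $(\tilde g_j')$ satisfies the properties of Definition~\ref{key-pol-defn} for $\nu$, so by Theorem~\ref{key-pol-thm} and the bijectivity of \eqref{form-from-pol} one obtains $m = n$ and $g_j' = g_j$. The principal technical obstacle lies in (P3): one has to check that an $F$ achieving $\tilde\omega(F) = \nu(f)$ pulls back to a $G$ achieving the minimum in \eqref{generating-eqn}, and the bookkeeping of the $x$-shifts in \eqref{form-from-pol} --- together with the observation that these shifts may produce negative powers of $x$ which are forbidden for key polynomials but allowed for key forms --- is where the care is required.
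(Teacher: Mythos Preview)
Your approach is exactly the paper's: it states only that Theorem~\ref{key-thm} ``is a straightforward corollary of Theorem~\ref{key-pol-thm}'' via the substitution \eqref{form-from-pol}, and you are simply spelling that out. The overall architecture (existence by transporting key polynomials through \eqref{form-from-pol}, uniqueness by inverting and invoking uniqueness of key polynomials) is correct.

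There is, however, a concrete computational error in your verification of (P2). You assert that
\[
\alpha_j d_j \;=\; \sum_{i=1}^{j-1}\beta'_{j,i}d_i
\]
``because the leading $v$-terms of both products must cancel''. They do not cancel: the leading $v$-term $v^{\alpha_j d_j}$ of $\tilde g_j^{\alpha_j}$ survives as the leading $v$-term of $\tilde g_{j+1}$ (indeed this is how you obtained $d_{j+1}=\alpha_j d_j$ a line earlier). In fact, using $\beta'_{j,i}<\alpha_i$ for $i\geq 1$ one has
\[
\sum_{i=1}^{j-1}\beta'_{j,i}d_i \;\leq\; \sum_{i=1}^{j-1}(\alpha_i-1)d_i \;=\; \sum_{i=1}^{j-1}(d_{i+1}-d_i) \;=\; d_j-1 \;<\; \alpha_j d_j.
\]
Consequently your formula $\beta_{j,0}=-\beta'_{j,0}$ is wrong. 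Carrying out the substitution $\tilde g_i(1/x,y/x^k)=x^{-kd_i}g_i$ and $\tilde g_0=1/x$ in $\tilde g_{j+1}=\tilde g_j^{\alpha_j}-\theta_j\tilde g_0^{\beta'_{j,0}}\cdots\tilde g_{j-1}^{\beta'_{j,j-1}}$ and multiplying by $x^{kd_{j+1}}$ actually gives
\[
\beta_{j,0} \;=\; k\Bigl(\alpha_j d_j - \sum_{i=1}^{j-1}\beta'_{j,i}d_i\Bigr) - \beta'_{j,0},
\]
which can still be negative (when $\beta'_{j,0}$ is large), so the point you make about negative exponents being the new feature of key forms remains valid. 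With this corrected $\beta_{j,0}$, the identity $\alpha_j\omega_j=\sum_{i=0}^{j-1}\beta_{j,i}\omega_i$ in (P1) checks out, and the rest of your argument goes through unchanged. A minor additional remark: in the uniqueness step you should take $k$ large enough simultaneously for the given sequence $(g_j')$ \emph{and} for the sequence produced in your existence proof, so that both pull back to key polynomials in the \emph{same} coordinates $(u,v)$ before invoking Theorem~\ref{key-pol-thm}.
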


\begin{rem} \label{generating-remark}
Compare Property \ref{pre-generating-property} of key polynomials and Property \ref{generating-property} of key forms: the difference is due to the fact that the change of coordinates $(x,y) \mapsto (u,v)$ introduces negative powers of $x$. One of our technical results states that if the $g_j$'s are polynomial in $(x,y)$, then in fact the minimum of the right hand side of \eqref{generating-eqn} is achieved with some $G$ which is also a polynomial in $(x,y_1, \ldots, y_{n+1})$ (Corollary \ref{minimum-cor}).  
\end{rem}

\begin{example} \label{deg-dpuiseux-key}
If $\delta$ is a weighted degree in $(x,y)$-coordinates corresponding to weights $p$ for $x$ and $q$ for $y$ with $p,q$ positive integers, then the generic degree-wise Puiseux series corresponding to $\delta$ is $\tilde \phi_\delta = \xi x^{q/p}$ and the key polynomials are $g_0 = x$ and $g_1 = y$. Note that $\cc^2$ is embedded into the weighted projective space $\pp^2(1,p,q)$ via the embedding $(x,y) \mapsto [1:x:y]$, then $\delta$ is precisely the order of the pole along the curve at infinity.
\end{example}

\begin{example}[cf.\ Example \ref{general-example}] \label{general-example-again}
Consider the set up of the paragraph preceding Proposition \ref{prop-param}. Let $O := [0:1:0] \in L_\infty$, so that $(u,v) := (1/x,y/x)$ is a system of coordinates at $O$ with $L_\infty = \{u=0\}$. Let $C$ be a curve-germ at $O$ with Puiseux expansion $v = \phi(u)$. Then the {\em degree-wise Puiseux expansion} of $C$ at $O$ is 
$$y := x\phi(1/x).$$
Set $\tilde \phi(x) := x\phi(1/x)$. Let $r$ be a non-negative integer, $E^*_{L_\infty,C,r}$ be as in Question \ref{contract-2-question}, and $\delta$ be the order of pole along $E^*_{L_\infty,C,r}$. Then the generic degree-wise Puiseux series associated to $\delta$ is
\begin{align*} 
\tilde \phi_\delta(x,\xi) &:= [\tilde \phi(x)]_{> (q-r)/p} + \xi x^{(q-r)/p},\quad \text{where}\\
p &:= \text{the polydromy order of $\tilde \phi$,} \\
q/p &:= \text{the {\em last} characteristic exponent of $\tilde \phi$,}\\
[\tilde \phi(x)]_{> (q-r)/p} &:= \text{sum of all terms of $\tilde \phi(x)$ with order greater than $(q-r)/p$.}
\end{align*}
\end{example}

\begin{example} \label{non-example-aaagain}
Set $u := 1/x$ and $v := y/x$. Let $\nu_1$ and $\nu_2$ be valuations from Example \ref{non-example-aagain} and set $\delta_i := -\nu_i$, $1 \leq i \leq 2$. It follows from Examples \ref{non-example-aagain} and \ref{general-example-again} that the generic degree-wise Puiseux series for $\delta_i$'s are:
\begin{align*}
\tilde \phi_{\delta_1} &= \begin{cases}
			\xi x^{2/5} & \text{if}\ r = 0,\\			
			x^{2/5} + \xi x^{(2-r)/5} & \text{if}\ r\geq 1.
			\end{cases}
		&& &
\tilde \phi_{\delta_2} &= \begin{cases}
			\xi x^{2/5} & \text{if}\ r = 0,\\			
			x^{2/5} + \xi x^{(2-r)/5} & \text{if}\ 1 \leq r\leq 7, \\
			x^{2/5} + x^{-1} + \xi x^{(2-r)/5} & \text{if}\ 8 \leq r.
			\end{cases}
\end{align*}		
The sequence of key polynomials for $\delta_1$ and $\delta_2$ for $0 \leq r < 10$ are as follows:
\begin{align*}
\parbox{2cm}{key polynomials for $\delta_1$} 
		 &= \begin{cases}
			x,y & \text{if}\ r = 0,\\			
			x,y, y^5 - x^2 & \text{if}\ r\geq 1.
			\end{cases}
		&& &
\parbox{2cm}{key polynomials for $\delta_2$}
		 &= \begin{cases}
			x,y & \text{if}\ r = 0,\\			
			x,y, y^5 - x^2 & \text{if}\ 1 \leq r\leq 7, \\
			x,y, y^5 - x^2, y^5 - x^2 - 5y^4x^{-1} & \text{if}\ 8 \leq r\leq 9.
			\end{cases}
\end{align*}			
In particular, for $8 \leq r \leq 9$, the last key polynomial for $\delta_2$ is {\em not} a polynomial. It is instructive to contrast this with the fact that for these values of $r$, both $\delta_i$'s are in fact {\em positive} on $\cc[x,y]\setminus\{0\}$ (since Example \ref{non-example-again} shows that $\tilde E_{L,C_2,r}$ is contractible).
\end{example}

\subsection{An algorithm to compute key forms from degree-wise Puiseux series} \label{essential-section}
Let $\delta$ be a divisorial semidegree on $\cc[x,y]$ such that $\delta(x) > 0$ and $g_0, \ldots, g_{n+1}$ be the key forms of $\delta$. Pick the subsequence $g_{j_1}, g_{j_2}, \ldots, g_{j_m}$ of $g_j$'s consisting of all $g_{j_k}$ such that $\alpha_{j_k} > 1$ (where $\alpha_{j_k}$ is as in Property \ref{semigroup-property} of Definition \ref{key-defn}). Set 
\begin{align*}
f_k &:= \begin{cases}
			g_0 = x & \text{if}\ k = 0,\\
			g_{j_k} & \text{if}\ 1 \leq k \leq m,\\
			g_{n+1}	& \text{if}\ k = m+1.
		\end{cases}
\end{align*}
We say that $f_0, \ldots, f_{m+1}$ are the {\em essential key forms} of $\delta$. An application of Theorem \ref{key-thm} immediately yields the following properties of essential key forms

\begin{cor} \label{essential-corollary}
$f_0, \ldots, f_{m+1}$ are unique elements in $\cc[x,x^{-1},y]$ such that
\begin{compactenum}
\let\oldenumi\theenumi
\renewcommand{\theenumi}{P$\oldenumi'$}
\addtocounter{enumi}{-1}
\item $f_0 = x$.
\item $f_1 = y - \sum_{i=1}^{k_0} c_i x^{\beta_0-i}$ for non-negative integers $k_0$ and $\beta_0$ such that $\delta(y) = \beta_0\delta(x)$ and $\delta(f_1) < (\beta_0-k_0)\delta(x)$.
\item \label{ess-next-property} Let $\omega_k := \delta(f_k)$, $0 \leq k \leq m+1$ and $\alpha_k := \min\{\alpha \in \zz_{> 0}; \alpha\omega_k \in \zz \omega_0 + \cdots + \zz \omega_{k-1}\}$ for $1 \leq k \leq m+1$. Then 
\begin{compactenum}
\item $\alpha_k \geq 2$ for each $k$, $1 \leq k \leq m$.
\end{compactenum}
Moreover, for each $k$, $1 \leq k \leq m$, $f_{k+1} = f_k^{\alpha_k} - \sum_{i=0}^{k_n}c_{k,i} f_0^{\beta^i_{k,0}} \cdots f_{k}^{\beta^i_{k,k}}$, where 
\begin{compactenum}
\addtocounter{enumii}{1}
\item $k_n \geq 0$, 
\item $c_{k,i} \in \cc^*$ for all $i$, $0 \leq i \leq k_n$,
\item \label{exponent-bound} $\beta^i_{k,j}$'s are integers such that $0 \leq \beta^i_{k,j} < \alpha_j$ for $1 \leq j \leq k$ and $0 \leq i \leq k_n$,
\item $\beta^0_{k,k} = 0$,
\item \label{decreasing-omega} $\alpha_k \omega_k = \sum_{j = 0}^{k-1}\beta^0_{k,j}\omega_j > \sum_{j = 0}^{k}\beta^1_{k,j}\omega_j > \cdots > \sum_{j = 0}^{k}\beta^{k_n}_{k,j}\omega_j > \omega_{k+1}$.
\end{compactenum} 
\item \label{ess-generating-property} Let $y_1, \ldots, y_{m+1}$ be indeterminates and $\omega$ be the {\em weighted degree} on $B:= \cc[x,x^{-1},y_1, \ldots, y_{m+1}]$ corresponding to weights $\omega_0$ for $x$ and $\omega_k$ for $y_k$, $1 \leq k \leq m+1$ (i.e.\ the value of $\omega$ on a polynomial is the maximum `weight' of its monomials). Let $\pi:B \to \cc[x,x^{-1},y]$ be the ring homomorphism that sends $x \mapsto x$ and $y_k \to f_k$, $1 \leq k \leq m+1$. Then for every polynomial $f \in \cc[x,y]$, 
\begin{align*}
\delta(f) = \min\{\omega(F): F \in B,\ \pi(F) = f\}.
\end{align*}
\end{compactenum} 
\end{cor}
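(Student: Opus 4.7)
My plan is to deduce Corollary \ref{essential-corollary} directly from the existence and uniqueness of key forms established in Theorem \ref{key-thm}, by analysing what survives when one discards the ``inessential'' indices from the key form sequence $g_0,\ldots,g_{n+1}$. Let $0<j_1<\cdots<j_m$ denote the indices in $\{1,\ldots,n\}$ with $\alpha_{j_k}>1$, so that $f_0=g_0$, $f_k=g_{j_k}$ for $1\le k\le m$, and $f_{m+1}=g_{n+1}$; the task is to verify P$0'$--P$3'$ for this subsequence and to establish uniqueness.

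The observation driving everything is that when $\alpha_i=1$, the constraint $0\le\beta_{j,i}<\alpha_i$ in Property \ref{semigroup-property} of Definition \ref{key-defn} forces $\beta_{j,i}=0$ for every $j>i$. Hence the inessential key forms never appear with positive exponent in any correction monomial $g_0^{\beta_{j,0}}\cdots g_{j-1}^{\beta_{j,j-1}}$ of Property \ref{next-property}. It follows that $g_0,\ldots,g_{n+1}$ can be reconstructed from the essential subsequence by ``unpacking'' each $f_{k+1}$ as a telescoping sum $f_{k+1}=f_k^{\alpha_k}-(\text{monomial}_0)-\cdots-(\text{monomial}_{k_n})$ involving only $f_0,\ldots,f_k$, each partial subtraction corresponding to one inessential $g_i$ with $j_k<i<j_{k+1}$.

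With this principle in hand I would verify the four properties in turn. P$0'$ is immediate. For P$1'$, I iterate the relation $g_{j+1}=g_j-\theta_j x^{\beta_{j,0}}$ (valid for $1\le j<j_1$, since all other exponents in the correction monomial must then vanish); padding with zero coefficients for exponents that do not occur gives the asserted consecutive-exponent form $y-\sum_{i=1}^{k_0}c_ix^{\beta_0-i}$, while the strict drop $\omega_{j+1}<\omega_j$ at each inessential step gives $\delta(f_1)<(\beta_0-k_0)\delta(x)$. For P$2'$ I substitute out all inessential $g_i$ with $j_k<i<j_{k+1}$ between $f_k$ and $f_{k+1}$, arriving at the asserted expression; the exponent bounds $0\le\beta^i_{k,j}<\alpha_j$ and $\beta^0_{k,k}=0$ (the first correction monomial does not contain $g_{j_k}=f_k$) are inherited from Definition \ref{key-defn}, and the strict chain \ref{decreasing-omega} follows from the successive drops $\omega_{j+1}<\omega_j$ at each inessential index. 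P$3'$ follows from Property \ref{generating-property}: any representation $G\in\cc[x,x^{-1},y_1,\ldots,y_{n+1}]$ of $f$ realising the minimum already has zero exponent on each inessential $y_j$, so it may be regarded as an element of $\cc[x,x^{-1},y_{j_1},\ldots,y_{j_m},y_{n+1}]$ with the same $\omega$-value.

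For uniqueness, I would start from any candidate sequence $f_0',\ldots,f_{m'+1}'$ satisfying P$0'$--P$3'$ and ``refine'' it into a full sequence of key form candidates by splitting each P$2'$-relation into single-monomial subtractions, one per index $i=0,\ldots,k_n$, each intermediate step producing an $\alpha=1$ key form. The strict chain \ref{decreasing-omega} together with the exponent bounds \ref{exponent-bound} are precisely what is needed to ensure the refined sequence satisfies Properties \ref{semigroup-property}--\ref{generating-property} of Definition \ref{key-defn}, whereupon Theorem \ref{key-thm} forces it to coincide with $g_0,\ldots,g_{n+1}$ term by term, and hence $f_k'=f_k$ for all $k$. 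The main subtle point I expect is the bookkeeping in this refinement, specifically checking that each inserted intermediate relation respects the minimality clause defining $\alpha_j$ in Definition \ref{key-defn}.
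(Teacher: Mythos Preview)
Your proposal is correct and matches the paper's approach: the paper gives no detailed proof, stating only that the corollary follows immediately from Theorem~\ref{key-thm}, and your argument is precisely the natural unpacking of that assertion. One small imprecision worth tightening: in your verification of P$3'$ it is not true that a minimiser $G$ in $\cc[x,x^{-1},y_1,\ldots,y_{n+1}]$ \emph{must} have zero exponent on the inessential $y_j$; rather, any such $G$ can be \emph{modified} (by substituting $y_i \mapsto y_{i+1} + \theta_i x^{\beta_{i,0}} y_1^{\beta_{i,1}}\cdots y_{i-1}^{\beta_{i,i-1}}$ for each inessential $i$, which does not raise $\omega$) into one that uses only the essential variables.
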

Let $y_1, \ldots, y_{m+1}$, $\omega$ and $\pi:B \to \cc[x,x^{-1},y]$ be as in Property \ref{ess-generating-property}. For each $k$, $0 \leq k \leq m+1$, let $B_k := \cc[x,x^{-1},y_1, \ldots, y_{k}] \subseteq B$. Define 
\begin{align} \label{F-k}
F_{k+1} &:= \begin{cases}
			y_1 & \text{if}\ k=0,\\
			y_{k}^{\alpha_{k}} - \sum_{i=0}^{k_n}c_{k,i} x^{\beta^i_{k,0}} y_1^{\beta^i_{k,1}} \cdots y_{k}^{\beta^i_{k,k}}, 
				& \text{for}\ 1 \leq k \leq m,\ \text{where $\alpha_k$, $c_{k,i}$ and} \\		
				& \text{$\beta^i_{k,j}$'s are as in Property \ref{ess-next-property}.}
			\end{cases}
\end{align} 
In particular, for each $k$, $1 \leq k \leq m$, $F_{k+1}$ is an element of $B_k$ such that $\pi(F_{k+1}) = f_{k+1}$. We now show how these $F_{k}$'s can be computed from the degree-wise Puiseux series.\\

Assume the generic degree-wise Puisuex series for $\delta$ is 
\begin{align}
\tilde \phi_\delta(x,\xi) 
	&:= \sum_{j=0}^{k'_0} a_{0j}x^{q_0 - j} + \left(a_1 x^{\frac{q_1}{p_1}} + \sum_{j=1}^{k'_1} a_{1j}x^{\frac{q_{1j}}{p_1}} \right)  +  \left(a_2 x^{\frac{q_2}{p_1p_2}} + \sum_{j=1}^{k'_2} a_{2j}x^{\frac{q_{2j}}{p_1p_2}} \right)  \notag\\
	& \mbox{\phantom{$:=\qquad$}} + \cdots + \left(a_{l} x^{\frac{q_{l}}{p_1p_2 \cdots p_{l}}} + \sum_{j=1}^{k'_{l}} a_{l,j}x^{\frac{q_{l,j}}{p_1p_2 \cdots p_{l}}} \right)  +  \xi x^{\frac{q_{l + 1}}{p_1p_2 \cdots p_{l+1}}} \label{tilde-phi-delta}
\end{align}
where $(q_1, p_1), \ldots, (q_{l + 1}, p_{l+1})$ are the {\em formal} Puiseux pairs of $\tilde \phi_\delta$ (Definition \ref{generic-deg-wise-defn}).

\begin{algorithm}[Construction of essential key forms from degree-wise Puiseux series] \label{essential-algorithm}
\mbox{}
\vspace{-5mm}
\paragraph{0. Number of essential key forms:} $l+2$, i.e.\ the {\em last} essential key form is $f_{l+1}$. The essential key forms can be calculated as follows:
\vspace{-5mm}
\paragraph{1. Base step:} $f_0 = x$ and $f_1 = y - \phi_1(x) = y - \sum_{j=0}^{k'_0} a_{0j}x^{q_0 - j}$.
\vspace{-5mm}
\paragraph{2. Inductive step (construction of $f_{k+1}$ assuming that $f_0, \ldots, f_k$ has been calculated, $1 \leq k \leq l$):} Set $F_{k+1,0} := y_k^{p_k}$ and $\tilde f_{k+1,0}(x,\xi) := f_k^{p_k}(x,\tilde \phi_{\delta}(x, \xi))$. Then 
\begin{align*}
\tilde f_{k+1,0}(x,\xi) 
	&=  b'_kx^{\frac{\tilde q_k}{p_1\cdots p_{k-1}}} + \lot
\end{align*}
for some $b'_k \in \cc^*$, where $\lot$ means lower order terms in $x$. Let
\begin{align*}
\tilde \omega_{k+1} 
	&:= \begin{cases}
			\max\left\{\tilde \omega \in \qq :\ \text{coefficient of $x^{\tilde \omega}$ in $\tilde f_{k+1,0}(x,\xi)$ is non-zero and $\tilde \omega \not\in \frac{1}{p_1\cdots p_{k}}\zz$}\right\} 
				& \text{if}\ k < l, \\
			\max\left\{\tilde \omega \in \qq :\ \text{coefficient of $x^{\tilde \omega}$ in $\tilde f_{k+1,0}(x,\xi)$ is in}\ \cc[\xi]\setminus\cc \right\}
				& \text{if}\ k = l.
		\end{cases} 
\end{align*}   
\begin{enumerate}
\renewcommand{\theenumi}{\bf Substep 2.1}
\item \label{substep} Assume $F_{k+1,i} \in B_k$ has been constructed for some $i \geq 0$ with $\tilde f_{k+1, i} := \pi(F_{k+1,i})|_{y = \tilde \phi_{\delta}(x, \xi)}$. If $\tilde \omega_{k+1,i} := \deg_x(\tilde f_{k+1, i}) = \tilde \omega_{k+1}$, then set $F_{k+1} := F_{k+1,i}$, $f_{k+1} := \pi(F_{k+1})$ and  {\bf stop.} Otherwise $\tilde \omega_{k+1,i} > \tilde \omega_{k+1}$ and there are unique integers $\beta^i_{k,0}, \cdots, \beta^i_{k,k}$ such that 
\begin{compactenum}
\item $0 \leq \beta^i_{k,j} < p_j$ for $1 \leq j \leq k$, and
\item $\sum_{j=0}^k \beta^i_{k,j} \omega_j = p_1 \cdots p_{l+1}\tilde \omega_{k+1,i}$.
\end{compactenum}
Let $c_{k,i} \in \cc^*$ be the coefficient of $x^{\tilde \omega_{k+1,i}}$ in $\tilde f_{k+1, i}$. Then set 
\begin{align}
F_{k+1,i+1} := F_{k+1,i} - c_{k,i}x^{\beta^i_{k,0}} y_1^{\beta^i_{k,1}} \cdots y_k^{\beta^i_{k,k}},
\end{align}
and {\bf repeat} \ref{substep}.
\end{enumerate}
\end{algorithm}

\begin{rem} \label{essential-example-remark}
Combining Example \ref{general-example-again} and Algorithm \ref{essential-algorithm} together gives the following algorithm to compute the essential key forms of the semidegree $\delta$ corresponding to $E^*_{L,C,r}$ of Question \ref{contract-2-question} from the Puiseux expansion $v = \phi(u)$ of $C$:
\begin{enumerate}
\let\oldenumi\theenumi
\renewcommand{\theenumi}{Step \oldenumi}
\addtocounter{enumi}{-1}
\item Set $(x,y) := (1/u, v/u)$ so that $(x,y)$ defines a system of coordinates on $\pp^2 \setminus L$. 
\item Apply Example \ref{general-example-again} to compute the generic degree-wise Puiseux series $\tilde \phi_\delta(x,\xi)$ for the semidegree $\delta$ (on $\cc(x,y)$) corresponding to $E^*_{L,C,r}$.
\item Apply Algorithm \ref{essential-algorithm} to compute the essential key forms of $\delta$.
\end{enumerate}
\end{rem}

\begin{rem}\label{essentially-all}
Algorithm \ref{essential-algorithm} in fact produces {\em all} the key forms of $\delta$. More precisely, w.l.o.g.\ we may assume that the key forms of $\delta$ are $f_0, f_{0,1}, \ldots, f_{0,i_0}, \ldots, f_l, f_{l,1}, \ldots, f_{l,i_l}, f_{l+1}$. Then the uniqueness of key polynomials implies that 
\begin{enumerate}
\item $f_{0,i}$'s are of the form $y - \sum_{j=0}^{i'} a_{0j}x^{q_0 - j}$ for some $i' \leq k'_0$ (where $a_{0j}x^{q_0-j}$'s and $k'_0$ are from \eqref{tilde-phi-delta}).
\item For all $k,j$, $1 \leq k \leq l$, $1 \leq j \leq i_k$, $f_{k,j} = \pi(F_{k,j})$  where $F_{k,j}$'s are from Algorithm \ref{essential-algorithm}.
\end{enumerate}
\end{rem}

Algorithm \ref{essential-algorithm} terminates (for each $k$) since $\deg_x(\tilde f_{k+1,i+1}) < \deg_x(\tilde f_{k+1,i})$ for each $i$. We do {\em not} prove the correctness of Algorithm \ref{essential-algorithm} in this article, since it can be proved by a (more or less straightforward, but long) induction on $k$ using simply the defining properties of essential key forms. Here we content ourselves with an example of how it works.

\begin{example} \label{essential-example}
Let $\tilde \phi_\delta(x,\xi) := x^3 + x^2+ x^{5/3} + x + x^{-13/6} + x^{-7/3} + \xi x^{-8/3}$. The formal Puiseux pairs of $\tilde \phi_\delta$ are $(5,3), (-13,2), (-16,1)$. Algorithm \ref{essential-algorithm} implies that $\delta$ has $4$ essential key forms $f_0, f_1, f_2, f_3$. We calculate the $f_j$'s following Algorithm \ref{essential-algorithm}. At first note that $f_0 = x$ and $f_1 = y - x^3 - x^2$. Then
\begin{align}
	y_1 &~\quotequal~ x^{5/3} + x + x^{-13/6} + x^{-7/3} + \xi x^{-8/3},\label{y_1-substitution}
\end{align}
where $\quotequal$ in the above equation denotes the composition of the map $\pi$ of Property \ref{ess-generating-property} (of essential key forms) followed by the substitution $y = \tilde \phi_\delta(x,\xi)$. It follows that
\begin{align*}
y_1^3 &~\quotequal~ x^5 + 3x^{13/3} + 3x^{11/3} + x^3 + 3x^{7/6} + 3x + 3\xi x^{2/3} + \lot,
\end{align*}
where $\lot$ denotes all the terms with degree in $x$ less than $2/3$. Then $\tilde \omega_2 = 7/6$ and we follow \ref{substep} to `absorb' all the terms with degree in $x$ greater than $7/6$. Since $13/3 = 1 + 2\cdot(5/3)$ and $11/3 = 2 + 5/3$, identity \eqref{y_1-substitution} implies that
\begin{align*}
y_1^3	& ~\quotequal~ x^5 + 3x\left(y_1 - x - x^{-13/6} - x^{-7/3} - \xi x^{-8/3}\right)^2  + 3x^2\left(y_1 - x - x^{-13/6} - x^{-7/3} - \xi x^{-8/3}\right)  \\
		& \mbox{\phantom{~\quotequal~}}\qquad + x^3 + 3x^{7/6} + 3x + 3\xi x^{2/3} + \lot\\
		&~\quotequal~ x^5 + 3xy_1^2 - 3x^2y_1 + x^3 + 3x^{7/6} + 3x + 3\xi x^{2/3} + \lot,
\end{align*}
where $\lot$ denotes all the monomials in $x,y_1$ with $\omega$-value less than $2/3$. It follows that 
\begin{subequations} 
\begin{align}
F_2 &= y_1^3 - x^5 - 3xy_1^2 + 3x^2y_1 - x^3, \\
f_2 &= \pi(F_2) = (y - x^3 - x^2)^3 - x^5 - 3x(y - x^3 - x^2)^2 + 3x^2(y - x^3 - x^2) - x^3,  \\
y_2 &~\quotequal~ 3x^{7/6} + 3x + 3\xi x^{2/3} + \lot \label{y_2-substitution}
\end{align}
\end{subequations} 
The calculation for $f_3$ follows in the similar fashion: $p_2 = 2$ and
\begin{align*}
y_2^2	& ~\quotequal~ 9x^{7/3} + 18x^{13/6} + 18\xi x^{11/6} + \lot
\end{align*}
It follows that $\tilde \omega_3 = 11/6$, and we proceed to absorb all the terms with $\omega$-value $> 11/6$ via repeated substitutions. Since $7/3 = -1 + 2\cdot(5/3) + 0\cdot(7/6)$ and $13/6 = 1 + 0\cdot(5/3) + 7/6$, identities \eqref{y_1-substitution} and \eqref{y_2-substitution} imply that
\begin{align*}{3}
y_2^2	& ~\quotequal~ 9x^{-1}\left(y_1 - x - x^{-13/6} - x^{-5/2} - \xi x^{-8/3}\right)^2  + 18x\frac{1}{3}\left(y_2 - 3x - 3\xi x^{2/3} - \lot\right) + 18\xi x^{11/6} + \lot \\
		& ~\quotequal~ 9x^{-1} y_1^2 + 6xy_2 -18x^2+ 18\xi x^{11/6} + \lot,
\end{align*}
so that $F_3 = y_2^2 - 9x^{-1} y_1^2 -  6xy_2 + 18x^2$ and $f_3 = \pi(F_3)$. This completes the computation of the essential key forms of $\delta$.
\end{example}

In Proposition \ref{essential-prop} below we list some properties of essential key forms $f_k$, $0 \leq k \leq l+1$, which we use in the sequel. We omit the proof of the proposition, since all of its assertions are straightforward implications of Algorithm \ref{essential-algorithm} and Remark \ref{essentially-all}. \\

Let $\xi_1, \ldots, \xi_{l+1}$ be new indeterminates, and for each $k$, $1 \leq k \leq l+1$, let $\delta_k$ be the semidegree on $\cc[x,y]$ corresponding to the generic degree-wise Puisuex series
\begin{align} 
\tilde \phi_k(x,\xi_k) 
			&:= \phi_k(x)  +  \xi_k x^{\frac{q_{k}}{p_1p_2 \cdots p_{k}}},\quad \text{where}\\
\phi_k(x)	&:= \sum_{j=0}^{k'_0} a_{0j}x^{q_0 - j} + \left(a_1 x^{\frac{q_1}{p_1}} + \sum_{j=1}^{k'_1} a_{1j}x^{\frac{q_{1j}}{p_1}} \right)  +  \cdots + \left(a_{k-1} x^{\frac{q_{k-1}}{p_1p_2 \cdots p_{k-1}}} + \sum_{j=1}^{k'_{k-1}} a_{k-1,j}x^{\frac{q_{k-1,j}}{p_1p_2 \cdots p_{k-1}}} \right), \label{phi-k}
\end{align} 
i.e.\ $\delta_k(x) = p_1 \cdots p_k$ and for each $f \in \cc[x,y]\setminus \{0\}$, 
\begin{align}
\delta_k(f(x,y)) = \delta_k(x) \deg_x\left(f(x,\tilde \phi_k(x,\xi_k)\right). \label{delta-k}
\end{align} 

\begin{prop} \label{essential-prop}
$m = l$, i.e.\ the last essential key form is $f_{l+1}$, where $l+1$ is the number of formal Puiseux pairs of $\tilde \phi_\delta$.
\begin{enumerate}
\item For each $k$, $1 \leq k \leq l+1$, 
\begin{compactenum}
\item \label{p-key} $\alpha_k = p_k$, where $\alpha_k$ is from Property \ref{ess-next-property} of key forms and $p_k$ is from \eqref{tilde-phi-delta}.
\item \label{essential-k} the essential key forms of $\delta_k$ are precisely $f_0, \ldots, f_k$. 
\end{compactenum}
\item \label{omega-k-properties} For all $k$, $1 \leq k \leq l$,
\begin{compactenum}
\item \label{gcd-omega} $\gcd(\omega_0, \ldots, \omega_k) = p_{k+1} \cdots p_{l+1}$.
\item $\delta_k(f_j) = \omega_j/(p_{k+1} \cdots p_{l+1})$ for $0 \leq j \leq k$.
\item \label{unique-omega-combination} For all $n \in \zz$, there are unique integers $\alpha,\beta_1, \ldots, \beta_{k}$ such that 
\begin{compactenum}
\item $0 \leq \beta_j < p_j$ for all $j$, $1 \leq j \leq k$, and
\item $\alpha \omega_0 + \sum_{j=1}^{k} \beta_j \omega_j = np_{k+1} \cdots p_{l+1}$.
\end{compactenum}
\item \label{omega-k-k+1} Let $\omega_{k,j} := \omega_k + q_{j}p_{j+1} \cdots p_{l+1} - q_kp_{k+1} \cdots p_{l+1}$ for $1 \leq k\leq j \leq l$. Then $\omega_{k+1,j} = (p_k-1)\omega_k + \omega_{k,j}$ for all $j$, $k+1 \leq j \leq l+1$. In particular, $\omega_{k+1} = \omega_{k+1,k+1} = p_k\omega_k + (q_{k+1} - q_kp_{k+1})p_{k+2} \cdots p_{l+1}$. 
\end{compactenum}
\item Set $p_0 :=1$. Then for each $k$, $1 \leq k \leq l+1$, 
\begin{compactenum}
\item \label{f-k-y-deg} $f_{k}$ is monic in $y$ with $\deg_y(f_{k}) = p_0p_2 \cdots p_{k-1}$.
\item \label{f-k-substitution} $f_k(x,\tilde \phi_k(x, \xi_k)) = b_k\xi_k x^{\tilde q_k/(p_1\cdots p_k)} + \lot$ for some $b_k \in \cc^*$ and $\tilde q_k \in \zz$ with $\gcd(\tilde q_k,p_k) = 1$.
\item \label{f-phi-k} Define
$$f_{\phi_k} := \prod_{\parbox{1.75cm}{\scriptsize{$\phi_{kj}$ is a con\-ju\-ga\-te of $\phi_k$}}}\mkern-27mu \left(y - \phi_{kj}(x)\right) \in \cc[x,x^{-1},y]$$
Then $f_{\phi_k} = f_k + \pi(\tilde F_k)$ for some $\tilde F_k \in B_{k-1}$ such that $\omega(\tilde F_k) \leq \omega_k$.
\end{compactenum}
\item \label{truncassertion} Let $g_0, \ldots, g_{n+1}$ be the key forms of $\delta$. Fix $n_* \leq n$ and let $\delta_*$ be the semidegree corresponding to the sequence of key forms $g_0, \ldots, g_{n_*+1}$. Then $\delta_*$ has a generic degree-wise Puiseux series of the form 
$$\tilde \phi_{\delta_*}(x,\xi) = \phi_*(x) + \xi x^{r_*},$$
where 
\begin{compactenum}
\item $r_* \leq r_\delta = q_{l+1}/(p_1 \cdots p_{l+1})$, and
\item $\phi_*(x) = [\phi_{l+1}(x)]_{< r_*}$.
\end{compactenum}
That is, $\tilde \phi_{\delta_*}$ is a {\em truncation} of $\tilde \phi_\delta$.
\end{enumerate} 
\end{prop}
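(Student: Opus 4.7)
The plan is to prove every assertion simultaneously by induction on the index $k$, with the inductive engine being Algorithm \ref{essential-algorithm} itself. The algorithm explicitly constructs $F_{k+1}$ as a finite sequence of monomial corrections $F_{k+1,i}$, and all the quantitative claims are preserved under each correction provided they hold at the previous stage. The base case $f_0 = x$, $f_1 = y - \phi_1(x)$ is immediate: in particular $f_1$ is monic in $y$ of $y$-degree $1 = p_0$ and $f_1(x,\tilde\phi_1(x,\xi_1)) = \xi_1 x^{q_1/p_1} + \lot$, so assertions \ref{f-k-y-deg}, \ref{f-k-substitution}, and \ref{p-key} hold for $k=1$.

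For the inductive step, assume $f_0,\ldots,f_k$ satisfy \ref{f-k-y-deg} and \ref{f-k-substitution}. First, because Algorithm \ref{essential-algorithm} starts $F_{k+1}$ with $y_k^{p_k}$ and every correction monomial $x^{\beta_0^i}y_1^{\beta_1^i}\cdots y_k^{\beta_k^i}$ has $y_k$-exponent strictly less than $p_k$, substituting $y_j \mapsto f_j$ keeps the leading $y$-monomial equal to $y^{p_0 p_1\cdots p_{k-1}\cdot p_k}$, proving \ref{f-k-y-deg} for $f_{k+1}$. Second, by construction the algorithm cancels precisely those monomials in $f_k^{p_k}(x,\tilde\phi_\delta)$ with $x$-exponent lying in $\frac{1}{p_1\cdots p_{k}}\zz$ (for $k<l$) or independent of $\xi$ (for $k=l$), stopping when the first truly new characteristic exponent $\tilde q_{k+1}/(p_1\cdots p_{k+1})$ appears with $\xi$ attached; this yields \ref{f-k-substitution}. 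Minimality $p_k = \alpha_k$ is equivalent to the Puiseux-pair coprimality $\gcd(\tilde q_k, p_k) = 1$, so \ref{p-key} holds, and together with termination of the algorithm this forces $m = l$.

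The remaining assertions fall out mostly mechanically. Assertion \ref{essential-k} is obtained by running Algorithm \ref{essential-algorithm} on $\delta_k$: since $\tilde\phi_k$ is the truncation of $\tilde\phi_\delta$ through its $k$-th characteristic exponent, the first $k$ iterations coincide step-by-step with those for $\delta$. For the $\omega$-properties, the recursive identity in \ref{omega-k-k+1}, namely $\omega_{k+1} = p_k\omega_k + (q_{k+1}-q_kp_{k+1})p_{k+2}\cdots p_{l+1}$, follows directly from applying $\delta$ to $f_{k+1} = f_k^{p_k} - \sum c_{k,i}(\cdots)$ and using \ref{f-k-substitution}; assertion \ref{gcd-omega} then follows by induction from this identity together with $\gcd(q_{k+1},p_{k+1}) = 1$; part (b) is just a rescaling using \ref{essential-k} and the fact that $\delta_k(x) = \delta(x)/(p_{k+1}\cdots p_{l+1})$; part (c) follows from \ref{gcd-omega} together with the exponent-range constraint in Property \ref{ess-next-property}. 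For \ref{f-phi-k}, both $f_{\phi_k}$ and $f_k$ are monic in $y$ of degree $p_1\cdots p_{k-1}$ whose leading Puiseux factors agree with $\phi_k$ and its conjugates, so their difference $\tilde F_k := f_{\phi_k} - f_k$ has $\omega$-value at most $\omega_k$ by Proposition \ref{valdeg} applied to the residual $\xi$-dependent factor. Finally, \ref{truncassertion} is read directly off the algorithm: restricting to the first $n_*+1$ key forms amounts to truncating the input Puiseux series at exponent $r_*$.

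The main obstacle will be the consistent bookkeeping in the inductive step, particularly the verification of \ref{f-phi-k}, where one must reconcile the asymmetric construction of $f_k$ (an algorithmic recursion in a single distinguished branch $\xi$) with the symmetric product $f_{\phi_k} = \prod_j (y - \phi_{kj})$ over the full Galois orbit of conjugate Puiseux series. Showing that the combinatorics of these two descriptions differ by an error term of $\omega$-value strictly less than $\omega_k$ requires a careful analysis of how the leading terms in each conjugate factor interact under $\delta$, but this can be handled uniformly using the substitution formula \eqref{phi-delta-defn} for $\delta$ in terms of $\tilde\phi_\delta$.
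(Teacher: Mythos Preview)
The paper explicitly omits the proof of this proposition (see Remark~\ref{no-claim-remark}), stating only that all assertions follow by ``a (more or less straightforward, but long) induction on $k$'' from Algorithm~\ref{essential-algorithm} and Remark~\ref{essentially-all}. Your proposal outlines precisely this induction and is essentially the same approach the paper indicates but declines to write out.

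Two small points of bookkeeping in your sketch are worth tightening. First, in the last paragraph you write ``strictly less than $\omega_k$'' for the error in \ref{f-phi-k}, but the statement only claims $\omega(\tilde F_k) \leq \omega_k$; the inequality need not be strict. Second, in your treatment of \ref{f-phi-k} you identify $\tilde F_k$ with the difference $f_{\phi_k} - f_k$ itself, but the assertion requires $\tilde F_k \in B_{k-1}$ and bounds its $\omega$-value as an element of $B_{k-1}$, not the $\delta$-value of its image. You need to invoke the unique lift of $f_{\phi_k} - f_k$ to $B_{k-1}$ (via the division procedure of Lemma~\ref{F-Phi}, available since $\deg_y(f_{\phi_k} - f_k) < p_1\cdots p_{k-1}$) and then argue, using part \ref{unique-omega-combination} at stage $k-1$ of the induction, that $\omega$ of this lift equals $\delta$ of the image; this is exactly the content of Lemma~\ref{essential-omega-lemma}, whose proof only uses \ref{unique-omega-combination} at the earlier index and is therefore not circular.
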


\subsection{Degree-like functions and compactifications} \label{degree-like-section}
\begin{defn} \label{degree-like-defn}
Let $X$ be an irreducible affine variety over an algebraically closed field $\kk$. A map $\delta: \kk[X] \setminus \{0\} \to \zz$ is called a {\em degree-like function} if 
\begin{compactenum}
\item \label{deg1} $\delta(f+g) \leq \max\{\delta(f), \delta(g)\}$ for all $f, g \in \kk[X]$, with $<$ in the preceding inequality implying $\delta(f) = \delta(g)$.
\item \label{deg2} $\delta(fg) \leq \delta(f) + \delta(g)$ for all $f, g \in \kk[X]$.
\end{compactenum}
\end{defn}

Every degree-like function $\delta$ on $\kk[X]$ defines an {\em ascending filtration} $\scrF^\delta := \{F^\delta_d\}_{d \geq 0}$ on $\kk[X]$, where $F^\delta_d := \{f \in \kk[X]: \delta(f) \leq d\}$. Define
$$ \kk[X]^\delta := \dsum_{d \geq 0} F^\delta_d, \quad \gr  \kk[X]^\delta := \dsum_{d \geq 0} F^\delta_d/F^\delta_{d-1}.$$

\begin{rem} \label{(f)_d-remark}
For every $f \in \kk[X]$, there are infinitely many `copies' of $f$ in $\kk[X]^\delta$, namely the copy of $f$ in $F^\delta_d$ for {\em each $d \geq \delta(f)$}; we denote the copy of $f$ in $F^\delta_d$ by $(f)_d$. If $t$ is a new indeterminate, then 
$$\kk[X]^\delta \cong \sum_{d \geq 0} F^\delta_d t^d,$$
via the isomorphism $(f)_d \mapsto ft^d$. Note that $t$ corresponds to $(1)_1$ under this isomorphism.
\end{rem}

We say that $\delta$ is {\em finitely-generated} if $\kk[X]^\delta$ is a finitely generated algebra over $\kk$ and that $\delta$ is {\em projective} if in addition $F^\delta_0 = \kk$. The motivation for the terminology comes from the following straightforward

\begin{prop}[{\cite[Proposition 2.5]{sub1}}] \label{basic-prop}
If $\delta$ is a projective degree-like function, then $\xdelta := \proj \kk[X]^\delta$ is a projective compactification of $X$. The {\em hypersurface at infinity} $\xdelta_\infty := \xdelta \setminus X$ is the zero set of the $\qq$-Cartier divisor defined by $(1)_1$ and is isomorphic to $\proj \gr \kk[X]^\delta$. Conversely, if $\bar X$ is any projective compactification of $X$ such that $\bar X \setminus X$ is the support of an effective ample divisor, then there is a projective degree-like function $\delta$ on $\kk[X]$ such that $\xdelta \cong \bar X$.
\end{prop}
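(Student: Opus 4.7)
\textbf{Proof plan for Proposition \ref{basic-prop}.} The strategy is to analyze the forward direction through the explicit presentation $\kk[X]^\delta \cong \sum_{d \geq 0} F^\delta_d t^d \subseteq \kk[X][t]$ of Remark \ref{(f)_d-remark} (so $t = (1)_1$), and to prove the converse by taking $\delta$ to be the ``pole order along $\bar X \setminus X$''.

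First I would verify that the two axioms of a degree-like function translate into $\kk[X]^\delta$ being a graded $\kk$-algebra: property \ref{deg1} says the $F^\delta_d$ form an ascending filtration, and property \ref{deg2} gives $F^\delta_d \cdot F^\delta_e \subseteq F^\delta_{d+e}$, which together make the multiplication $(f)_d (g)_e := (fg)_{d+e}$ well defined and compatible with the grading. Projectivity of $\delta$ then says $\kk[X]^\delta$ is a finitely generated $\kk$-algebra whose degree-zero piece is $\kk$, so $\xdelta := \proj \kk[X]^\delta$ is a projective variety over $\kk$.

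Next I would identify $X$ with the open complement of $V_+(t) \subseteq \xdelta$. Inverting $t$ gives a degree-zero localization $\bigl(\kk[X]^\delta[t^{-1}]\bigr)_0 \cong \kk[X]$: the map $(f)_d/t^d \mapsto f$ is well-defined because for every $f \in \kk[X]$ and every $d \geq \delta(f)$ the element $(f)_d \in F^\delta_d$ exists, and $(f)_d/t^d$ does not depend on $d$. Consequently $\xdelta \setminus V_+(t) \cong \spec \kk[X] = X$, so $\xdelta$ is a compactification of $X$. For the second assertion, note that quotienting $\kk[X]^\delta$ by the ideal $(t)$ kills the image of $F^\delta_{d-1}$ inside $F^\delta_d$ (that image is exactly $t \cdot F^\delta_{d-1}$), yielding an isomorphism of graded rings $\kk[X]^\delta/(t) \cong \gr \kk[X]^\delta$ and hence $\xdelta_\infty = V_+(t) \cong \proj \gr \kk[X]^\delta$. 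Since $\kk[X]^\delta$ need not be generated in degree one, $t$ itself is only $\qq$-Cartier on $\xdelta$; however, some power $t^N$ (with $N$ a common multiple of the generator degrees) is a section of a genuine line bundle cutting out $V_+(t)$, so the hypersurface at infinity is the zero set of the $\qq$-Cartier divisor defined by $(1)_1$, as claimed.

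For the converse, suppose $\bar X$ is a projective compactification of $X$ with $\bar X \setminus X = \supp D$ for some effective ample divisor $D$. Set $F_d := H^0(\bar X, \scrO_{\bar X}(dD))$; every section of $\scrO_{\bar X}(dD)$ has poles at most along $D$, hence is regular on $X$, so $F_d$ embeds in $\kk[X]$. Define
\[
\delta(f) := \min\{d \geq 0 : f \in F_d\},\qquad f \in \kk[X]\setminus\{0\}.
\]
The filtration properties $F_d + F_e \subseteq F_{\max(d,e)}$ and $F_d \cdot F_e \subseteq F_{d+e}$ translate directly into properties \ref{deg1} and \ref{deg2}, while $H^0(\bar X, \scrO_{\bar X}) = \kk$ gives $F_0 = \kk$. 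Ampleness of $D$ implies that $\bigoplus_d F_d$ is a finitely generated $\kk$-algebra and that $\proj \bigoplus_d F_d \cong \bar X$ (standard: pass to a very ample multiple of $D$; the resulting Veronese subring has the same $\proj$). Therefore $\delta$ is projective and $\xdelta \cong \bar X$.

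The main obstacle, which runs through both directions, is that $\kk[X]^\delta$ need not be generated in degree one; this forces the ``divisor at infinity'' to be $\qq$-Cartier rather than Cartier in the forward direction, and in the converse direction requires the usual passage to a Veronese subring to identify $\proj \bigoplus_d F_d$ with $\bar X$. Everything else reduces to routine manipulations of graded rings and filtrations.
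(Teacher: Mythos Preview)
The paper does not supply a proof of Proposition \ref{basic-prop}; it merely cites \cite[Proposition 2.5]{sub1} and uses the statement as input. Your proposal is the standard argument and is correct: identify $\kk[X]^\delta$ with the Rees-type algebra $\sum_{d\geq 0} F^\delta_d\,t^d$, read off $D_+(t)\cong X$ and $V_+(t)\cong \proj \gr\kk[X]^\delta$, and for the converse take $F_d=H^0(\bar X,\scrO_{\bar X}(dD))$ with $D$ the given effective ample divisor, so that $\proj\bigoplus_d F_d\cong\bar X$ by ampleness. One small point worth making explicit in the converse is that every $f\in\kk[X]$ actually lies in some $F_d$ (so that $\delta$ is defined on all of $\kk[X]\setminus\{0\}$ and $F^\delta_d=F_d$); this uses that $X$ is affine and $\supp D=\bar X\setminus X$, so the pole divisor of $f$ on $\bar X$ is bounded by a multiple of $D$.
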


\begin{defn} \label{sub-defn}
A degree-like function $\delta$ is called a {\em semidegree} if it always satisfies property \ref{deg2} with an equality. A semidegree is the negative of a {\em discrete valuation}. A {\em divisorial semidegree} is the negative of a divisorial valuation.
\end{defn}

\section{Proof of the main results} \label{sec-proof}
In this section we give proofs of Theorems \ref{key-thm-geometric}, \ref{key-thm-effective}, \ref{semigroup-prop} and Proposition \ref{effective-answer-Puiseux-1} assuming Proposition \ref{key-prop} below. 

\begin{defn}
Let $X := \cc^2$ with coordinates $(x,y)$. Let $\phi(x)$ be a degree-wise Puiseux series in $x$ and $C \subseteq X$ be an analytic curve. We say that $(x,\phi(x))$ is a {\em parametrization of a branch of $C$ at infinity} iff there is a branch of $C$ with a parametrization of the form $t \mapsto (t, \phi(t))$ for $|t| \gg 0$.
\end{defn}

Let $\bar X$ be a normal analytic compactification of $X$ with $C_\infty := \bar X \setminus X$ irreducible and let $\delta$ be the semidegree on $\cc(x,y)$ corresponding to $C_\infty$. Let $\tilde \phi_\delta(x,\xi)$ be the generic degree-wise Puiseux series for $\delta$. The following proposition holds the key for the proofs of our main results. 

\begin{prop} \label{key-prop}
Let $\delta$ be as in the preceding paragraph and let $g_0, \ldots, g_{n+1}$ be the key forms associated to $\delta$.
\begin{enumerate}
\item \label{easy-assertion} Let $f_0, \ldots, f_{l+1}$ be the {\em essential} key forms of $\delta$. If all the key forms are polynomials, then $\bar X$ is isomorphic to the closure of the image of $X$ in the weighted projective variety $\pp^{l+2}(1,\delta(f_0), \ldots, \delta(f_{l+1}))$ under the mapping $(x,y) \mapsto [1:f_0:\cdots:f_{l+1}]$. 
\item \label{one-place-assertion} If $g_{n+1}$ is a polynomial then $C_{n+1} := V(g_{n+1}) \subseteq X$ is a curve with one place at infinity and its unique branch at infinity has a parametrization of the form \eqref{dpuiseux-param} (from Proposition \ref{prop-param}).
\item \label{hard-assertion} If there exists $k$, $0 \leq k \leq n+1$, such that $g_k$ is {\em not} a polynomial, then there does not exist any polynomial $f \in \cc[x,y]$ such that {\em every} branch of $V(f) \subseteq X$ at infinity has a parametrization of the form \eqref{dpuiseux-param}.
\end{enumerate}
\end{prop}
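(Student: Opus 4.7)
I would prove the three assertions in sequence, treating \ref{easy-assertion} and \ref{one-place-assertion} as warm-ups that exhibit the role of the essential key forms, and concentrating the real work on \ref{hard-assertion}.

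For assertion \ref{easy-assertion}, under the hypothesis that all key forms are polynomials, the essential key forms $f_0,\ldots,f_{l+1}$ lie in $\cc[x,y]$, so the map $\pi\colon B_{l+1}\to\cc[x,y]$ of Property \ref{ess-generating-property} restricts to a surjection of the weighted polynomial ring $\cc[x,y_1,\ldots,y_{l+1}]$ (with weights $\omega_0,\ldots,\omega_{l+1}$) onto $\cc[x,y]$. I would check that the minimization formula $\delta(f)=\min\{\omega(F):\pi(F)=f\}$ is realized by a preimage in $\cc[x,y_1,\ldots,y_{l+1}]$ (i.e.\ without $x^{-1}$); this makes the induced map of Rees algebras identify $\cc[X]^\delta$ with a graded quotient of the weighted polynomial ring. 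Proposition \ref{basic-prop} then identifies $\bar X=\proj\cc[X]^\delta$ with the closure of the image of $X$ in $\pp^{l+2}(1,\omega_0,\ldots,\omega_{l+1})$ under $(x,y)\mapsto[1:f_0:\cdots:f_{l+1}]$.

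For assertion \ref{one-place-assertion}, note that $g_{n+1}=f_{l+1}$ is monic in $y$ of degree $p_0p_1\cdots p_l$ by Proposition \ref{essential-prop}\ref{f-k-y-deg}, while Proposition \ref{essential-prop}\ref{f-phi-k} presents $f_{l+1}$ as $f_{\phi_{l+1}}$ plus lower-order terms, where $f_{\phi_{l+1}}=\prod(y-\phi_{l+1,j}(x))$ runs over the conjugates of $\phi_{l+1}$. Factoring $f_{l+1}$ over $\dpsxc$ via Theorem \ref{dpuiseux-factorization} and matching leading behaviour shows that its degree-wise Puiseux roots coincide, up to sufficiently high order, with the conjugates of $\phi_\delta$, which form a single Galois orbit. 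Hence $V(g_{n+1})$ has exactly one place at infinity, and Proposition \ref{prop-param} supplies the required parametrization of the form $(*)$ of \eqref{dpuiseux-param}.

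The substantive content is assertion \ref{hard-assertion}, which I would prove by contraposition: assuming there exists $f\in\cc[x,y]$ all of whose branches at infinity admit a parametrization of the form $(*)$, I would show that every key form of $\delta$ lies in $\cc[x,y]$. My plan is induction on the index of the key forms, using the truncation assertion \ref{truncassertion} of Proposition \ref{essential-prop}: at each stage the truncated generic series defines a semidegree $\delta_*$ whose essential key forms are an initial segment of those of $\delta$, and Proposition \ref{prop-param} applied to the compactification associated to $\delta_*$ shows that the branches of $V(f)$ at infinity remain compatible with $\delta_*$. The inductive step thus amounts to passing from a polynomial list $f_0,\ldots,f_k$ to polynomiality of $f_{k+1}$.

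The hardest point, and the step I anticipate as the main obstacle, is a refinement of Property \ref{ess-generating-property}: under the hypothesis on $V(f)$, the minimum $\delta(f)=\min\{\omega(F):\pi(F)=f\}$ must be attained by a preimage $F\in\cc[x,y_1,\ldots,y_{l+1}]$ that contains no negative power of $x$. Because $f$ is a polynomial and not merely an element of $\cc[x,x^{-1},y]$, any occurrence of $x^{-1}$ in a representation of $f$ through the $f_k$ is forced to cancel at the level of leading forms in $\gr\cc[X]^\delta$. Tracking these cancellations is precisely the ``lifting cancellations in $\cc[x,y]$ to cancellations in the graded ring'' principle flagged in the introduction, and it shows inductively that each $f_k$ is itself representable without $x^{-1}$, hence lies in $\cc[x,y]$. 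The geometric input---that every branch of $V(f)$ matches $\tilde\phi_\delta$---enters through Proposition \ref{prop-param}, which rules out additional branches whose contribution to the factorization over $\dpsxc$ would leave $x^{-1}$ surviving in the leading form; this is what closes the contradiction.
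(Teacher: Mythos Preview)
For assertions \ref{easy-assertion} and \ref{one-place-assertion} your plan matches the paper's (the paper invokes \ref{f-k-substitution} rather than \ref{f-phi-k} for the one-place statement, but the idea is the same).

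For assertion \ref{hard-assertion} there is a genuine gap. You propose to show that $\delta(f)=\min\{\omega(F):\pi(F)=f\}$ is realized by some $F\in\cc[x,y_1,\ldots,y_{l+1}]$, and to deduce from this that each $f_k$ is polynomial. But under the contrapositive hypothesis some $f_k$ is \emph{not} a polynomial, so $\pi$ does not carry $\cc[x,y_1,\ldots,y_{l+1}]$ into $\cc[x,y]$ at all, and your minimizing preimage need not exist in that ring; moreover, even granting such a preimage for the single polynomial $f$, nothing in your argument forces $\pi(y_k)=f_k$ itself to be polynomial. The paper avoids this circularity by first using truncation (once, not inductively) to reduce to the case where $f_0,\ldots,f_l$ are polynomial but $f_{l+1}=g_{n+1}$ is not, and then working entirely in $\tilde B_l=\dpsxc[y_1,\ldots,y_l]$, where Lemma~\ref{poly-positive-prop} legitimately applies because \eqref{polynomial-cond} holds for $k=l$.

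The mechanism you are missing is a concrete leading-term comparison, not an abstract cancellation principle. Writing $f=\prod_k f_{\psi_k}$ with $\psi_k\equiv_{r_{l+1}}\phi_{l+1}$, Lemma~\ref{limit-of-agreement} gives $F_{\psi_k}^{(l)}=F_{l+1}+\tilde F_k$ with $\omega(\tilde F_k)\le\omega_{l+1}$, producing an \emph{explicit} lift $F\in\tilde B_l$ of $f$ in which the first monomial of $F_{l+1}$ carrying a negative $x$-exponent $\beta^{i'}_{l,0}<0$ survives undisturbed. Subtracting a suitable power isolates an element $G$ whose $\prec_l$-leading term has each $y_j$-exponent $<p_j$ and is therefore not divisible by any $\lt_{\prec_l}(H_{j+1})=y_j^{p_j}$. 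On the other hand $\pi_l(G)\in\cc[x,y]$, so Lemma~\ref{poly-positive-prop} supplies a polynomial lift $\tilde G\in A_l$ with $\omega(\tilde G)=\delta_l(\pi_l(G))$; comparing $\ld_\omega(x^\alpha G)$ and $\ld_\omega(x^\alpha\tilde G)$ against the Gr\"obner basis $\scrB_l$ of $J_l$ forces $\omega(G)=\omega(\tilde G)$ and then yields the contradiction. Your sketch invokes ``lifting cancellations to the graded ring'' but never identifies Lemma~\ref{limit-of-agreement} (the control on the explicit lift) or the Gr\"obner-basis comparison of Lemma~\ref{poly-positive-prop}, and these two ingredients are exactly where the proof lives.
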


\begin{proof}[Proof of Theorem \ref{key-thm-effective}]
Consider the set up of Theorem \ref{key-thm-effective}. Let $X := \pp^2 \setminus L \cong \cc^2$ and $\bar X$ be the normal analytic surface arising from the contraction of $\tilde E$. Then $\bar X$ is a normal analytic compactification of $X$ and identities \eqref{form-from-pol-0} and \eqref{form-from-pol} show that $g_0, \ldots, g_{n+1}$ of Theorem \ref{key-thm-effective} are precisely the key forms of the semidegree $\delta$ corresponding to the curve at infinity on $\bar X$. The first part of Theorem \ref{key-thm-effective} therefore translates into the following:

{
\renewcommand{\thethm}{$\ref{key-thm-effective}^*$}
\begin{thm} \label{key-thm-effective'}
$\bar X$ is algebraic iff all the key forms associated to $\delta$ are {\em polynomials} iff the {\em last} key form associated to $\delta$ is a polynomial.
\end{thm}
\addtocounter{thm}{-1}
}

\begin{proof}[Proof of Theorem \ref{key-thm-effective'}]
Note that assertions \ref{one-place-assertion} and \ref{hard-assertion} of Proposition \ref{key-prop} imply that the {\em last} key form of $\delta$ is a polynomial iff {\em all} the key forms of $\delta$ are polynomials. Moreover, assertion \ref{easy-assertion} shows that the latter (and hence both) of the equivalent properties of the preceding sentence imply that $\bar X$ is algebraic. Therefore it only remains to show that if $\bar X$ is algebraic then all the key forms of $\delta$ are polynomials. So assume that $\bar X$ is algebraic. Let $\bar X^0 \cong \pp^2$ be the compactification of $X$ induced by the map $(x,y) \mapsto [1:x:y]$, $\sigma: \bar X \dashrightarrow \bar X^0$ be the natural bimeromorphic map, and $S$ (resp.\ $S'$) be the finite set of points of indeterminacy of $\sigma$ (resp.\ $\sigma^{-1})$. We have two cases to consider:

\paragraph{Case 1: $\sigma(C_\infty \setminus S)$ is dense in $L_\infty := \bar X^0 \setminus X$.} In this case it follows from basic geometry of bimeromorphic maps that $\sigma$ must be an isomorphism. In particular, this implies that $\delta$ is precisely the usual degree in $(x,y)$-coordinates. Example \ref{deg-dpuiseux-key} then implies that the key forms of $\delta$ are polynomials.

\paragraph{Case 2: $\sigma(C_\infty \setminus S)$ is a point $O \in L_\infty$.} In this case we are in the situation of Proposition \ref{prop-param}. In particular, $\sigma^{-1}(L_\infty\setminus S')$ is a point $P_\infty \in C_\infty$. Since $\bar X$ is algebraic, it follows that there is an algebraic curve $C \subseteq X$ such that the closure of $C$ in $\bar X$ does {\em not} intersect $P_\infty$. Proposition \ref{prop-param} then implies that {\em every} branch of $C$ at infinity has a parametrization of the form \eqref{dpuiseux-param}. Then assertion \ref{hard-assertion} of Proposition \ref{key-prop} implies that all the key forms of $\delta$ are polynomials, as required to complete the proof of Theorem \ref{key-thm-effective'}
\end{proof}
It remains to prove the last assertion of Theorem \ref{key-thm-effective}. Assume $\tilde E$ is algebraically contractible. Then Theorem \ref{key-thm-effective'} then implies that $g_{n+1}$ is a polynomial. Assertion \ref{one-place-assertion} of Proposition \ref{key-prop} and Proposition \ref{prop-param} then imply that $\tilde C := V(g_{n+1}) \subseteq \tilde Y$ satisfies the requirement of Assertion \ref{existence-assertion-one-place} of Theorem \ref{key-thm-geometric}. This completes the proof of Theorem \ref{key-thm-effective}.
\end{proof}

\begin{proof}[Proof of Theorem \ref{key-thm-geometric}]
At first we show that \eqref{algebraic-assertion} $\im$ \eqref{existence-assertion-1} $\im$ \eqref{existence-assertion-2}. As in Figure \ref{fig:geom-answer}, let $\tilde \pi: \tilde Y \to Y$ be the contraction of $\tilde E$ and let $T := \tilde \pi(\tilde E)$. Since $T$ is a finite set of points, it is clear that if $Y$ is algebraic then there is a (compact) algebraic curve $\tilde C \subseteq Y \setminus T$. This shows that \eqref{algebraic-assertion} $\im$ \eqref{existence-assertion-1}. It is obvious that \eqref{existence-assertion-1} $\im$ \eqref{existence-assertion-2}.\\

We now show that \eqref{existence-assertion-2} $\im$ \eqref{algebraic-assertion}. As in the proof of Theorem \ref{key-thm-effective}, let $X := \pp^2 \setminus L \cong \cc^2$, $(x,y)$ be a system of coordinates on $X$ and $\delta$ be the semidegree on $\cc(x,y)$ corresponding to $E^*$. Recall from Remark \ref{simplest-remark} that the contraction of $\tilde E \setminus \tilde L$ produces a projective compactification $\bar X'$ of $X$ with {\em rational singularities}. Let $L'$ (resp.\ $E'^*$) be the image of $\tilde L$ (resp.\ $E^*$) on $\bar X'$. Let $\tilde C$ be as in Assertion \ref{existence-assertion-2} and $C'$ be the image of $\tilde C$ on $\bar X'$. Since $\bar X'$ has only rational singularities, it follows that $C'$, $L'$ and $E'^*$ are $\qq$-Cartier divisors. Pick $f \in \cc[x,y]$ such that $\tilde C \cap X = V(f)$. Then $C'$ is linearly equivalent (as a $\qq$-Cartier divisor) to $\deg(f)L' + \delta(f)E'^*$. Note that $\delta(f) > 0$, since $\delta$ is the order of pole of $f$ along the curve at infinity on $Y$ (where $Y$ is as in the preceding paragraph). Since $\deg(f)$ is also positive, a theorem of Zariski-Fujita \cite[Remark 2.1.32]{lazativityI} implies that the line-bundle $\sheaf_{\bar X'}(mC')$ is base-point free for some $m \geq 1$. Let $\bar X''$ be the image of the morphism defined by sections of $\sheaf_{\bar X'}(mC')$. Since $C'$ does not intersect $L'$, it follows that $L'$ maps to a point in $\bar X''$. It is then straightforward to see that $\bar X''$ is precisely the contraction of $\tilde E$ from $\tilde Y$. It follows that $\tilde E$ is algebraically contractible, as required for Assertion \ref{algebraic-assertion}.\\

Since the implication \eqref{existence-assertion-one-place} $\im$ \eqref{existence-assertion-2} is obvious and the implication \eqref{algebraic-assertion} $\im$ \eqref{existence-assertion-one-place} is a consequence of the last assertion of Theorem \ref{key-thm-effective}, the proof of Theorem \ref{key-thm-geometric} is complete.
\end{proof}

\begin{proof} [Proof of Proposition \ref{effective-answer-Puiseux-1}]
Recall that $L = \{u = 0\}$. Let the Puiseux expansion for $C$ at $O := (0,0)$ be
$$v = a_0u^{\tilde q/p} + a_1 u^{(\tilde q+1)/p} + \cdots$$
Let $\tilde f$ be as in Proposition \ref{effective-answer-Puiseux-1}. Then it is straightforward to see that 
\begin{align*} 
\tilde f = \begin{cases}
			0 & \text{if}\ r = 0, \\
			\text{a monic polynomial in $v$ of degree $p$} & \text{otherwise.}
		   \end{cases}
\end{align*}
Let $\nu$ be the divisorial valuation on $\cc(u,v)$ corresponding to $E^*_{L,C,r}$ (i.e.\ the {\em last} exceptional divisor in the set up of Question \ref{contract-2-question}). Example \ref{general-example} shows that the generic Puiseux series corresponding to $\nu$ is 
\begin{align} \label{tilde-phi-nu-alg}
\tilde \phi_\nu(u,\xi) = \begin{cases}
							\xi u^{\tilde q/p} & \text{if}\ r = 0, \\
							a_0u^{\tilde q/p} + \cdots + a_{r-1}u^{(\tilde q+r-1)/p} + \xi u^{(\tilde q+r)/p} & \text{otherwise.}
						   \end{cases}
\end{align}
If $r = 0$, then the key polynomials for $\nu$ are $\tilde g_0 = u$ and $\tilde g_1 = v$. For $r \geq 1$, the sequence continues with $\tilde g_2 =  v^p - a_0^pu^{\tilde q}$ and so on, with 
$$\tilde g_j = \tilde g_{j-1} - \text{a monomial term in $u,v$}\quad \text{for $j \geq 3$.}$$
It then follows from the construction of $\tilde f$ and the defining properties (and uniqueness) of key polynomials that $\tilde f$ is precisely the {\em last key polynomial} $\tilde g_{n+1}$ of $\nu$.\\

Now identify $X := \pp^2 \setminus L$ with $\cc^2$ with coordinates $(x,y) := (1/u,v/u)$. Then $\tilde E_{L,C,r}$ is algebraically contractible iff the compactification $\bar X$ of $X$ corresponding to the semidegree $\delta :=-\nu$ is algebraic. The proposition follows from combining Theorem \ref{key-thm-effective'} with the following observations:
\begin{compactenum}
\item If $r = 0$, then $\bar X$ is the weighted projective space $\pp^2(1,p,p-q)$ (Example \ref{deg-dpuiseux-key}).
\item the last key form $g_{n+1}$ of $\delta$ is a polynomial iff $\deg_{(u,v)}(\tilde g_{n+1}) \leq \deg_v(\tilde g_{n+1})$ (follows from \eqref{form-from-pol}). \qed
\end{compactenum}
\noqed
\end{proof}

\begin{proof}[Proof of Theorem \ref{semigroup-prop}]
We use the notations of Theorem \ref{semigroup-prop} and Question \ref{contract-2-question}. Set 
\begin{align*}
q_k := \begin{cases}
				p_1 \cdots p_k - \tilde q_k &\text{for}\ 1 \leq k \leq l,\\
				p_1 \cdots p_{\tilde l} - \tilde q_{\tilde l} - r &\text{for}\ k =  l + 1.
			  \end{cases}
\end{align*}
Consider a generic degree-wise Puisuex series of the form
\begin{align*}
\tilde \phi_{\vec a}(x,\xi) 
	&:= \left(a_1 x^{\frac{q_1}{p_1}} + \sum_{j=1}^{k'_1} a_{1j}x^{\frac{q_{1j}}{p_1}} \right)  +  \left(a_2 x^{\frac{q_2}{p_1p_2}} + \sum_{j=1}^{k'_2} a_{2j}x^{\frac{q_{2j}}{p_1p_2}} \right)  \notag\\
	& \mbox{\phantom{$:=\qquad$}} + \cdots + \left(a_{l} x^{\frac{q_{l}}{p_1p_2 \cdots p_{l}}} + \sum_{j=1}^{k'_{l}} a_{l,j}x^{\frac{q_{l,j}}{p_1p_2 \cdots p_{l}}} \right)  +  \xi x^{\frac{q_{l + 1}}{p_1p_2 \cdots p_{l+1}}} 
\end{align*}
where $a_1, \ldots, a_{l} \in \cc^*$ and $a_{ij}$'s belong to $\cc$. As in the proof of Proposition \ref{effective-answer-Puiseux-1}, identify $X := \pp^2 \setminus L$ with $\cc^2$ with coordinates $(x,y) := (1/u,v/u)$. The assumption in Theorem \ref{semigroup-prop} that $\tilde E_{L,C,r}$ is contractible for every curve $C$ with Puiseux pairs $(\tilde q_1, p_1), \ldots, (\tilde q_{\tilde l},p_{\tilde l})$ is equivalent to saying that for all choices of $a_i$'s and $a_{ij}$'s, the semidegree $\delta_{\vec{a}}$ corresponding to $\tilde \phi_{\vec{a}}$ is the pole along the curve at infinity on some normal analytic compactification $\bar X_{\vec{a}}$ of $X$ with one {\em irreducible} curve at infinity. The statements of Theorem \ref{semigroup-prop} then translate into the following statements:
\begin{enumerate}
\let\oldenumi\theenumi
\renewcommand{\theenumi}{$\oldenumi'$}
\item \label{algebraic-existence'} There exist $a_i$'s and $a_{ij}$'s such that $\bar X_{\vec{a}}$ is algebraic, iff the semigroup condition \eqref{semigroup-criterion-1} holds for all $k$, $1 \leq k \leq  l$.
\item \label{non-algebraic-existence'} There exist $a_i$'s and $a_{ij}$'s such that $\bar X_{\vec{a}}$ is {\em not} algebraic iff either \eqref{semigroup-criterion-1} or \eqref{semigroup-criterion-2} {\em fails} for some $k$, $1 \leq k \leq l$.
\end{enumerate} 

At first we prove Assertion \ref{algebraic-existence'}. It follows from a straightforward application of Proposition \ref{essential-prop} that each $\delta_{\vec a}$ has precisely $l + 2$ {\em essential key forms} $f^{\vec a}_0, \ldots, f^{\vec a}_{l+1}$ and 
\begin{align}
\omega^{\vec a}_k := \delta_{\vec a}(f^{\vec a}_k) = \omega_k,\quad 0 \leq k \leq l+1, \label{omega-a-k}
\end{align}
where $\omega_k$'s are as in conditions \eqref{semigroup-criterion-1} and \eqref{semigroup-criterion-2}. To see the $(\im)$ direction of Assertion \ref{algebraic-existence'}, pick $\vec a$ such that $\bar X_{\vec{a}}$ is algebraic. Theorem \ref{key-thm-effective'} implies that the $f_k$'s are polynomial for all $k$, $0 \leq k \leq l+1$. Algorithm \ref{essential-algorithm} and identity \eqref{omega-a-k} then imply that the semigroup condition \eqref{semigroup-criterion-1} holds for all $k$, $1 \leq k \leq l$. For the $(\Leftarrow)$ implication of Statement \ref{algebraic-existence'}, consider $\vec a_0$ corresponding to the choice $a_1 = \cdots = a_{\tilde l} = 1$ and $a_{ij} = 0$ for all $i,j$, i.e.\  
\begin{align}
\tilde \phi_{\vec a_0}(x,\xi) 
	&:= x^{\frac{q_1}{p_1}} + x^{\frac{q_2}{p_1p_2}} + \cdots + x^{\frac{q_{l}}{p_1p_2 \cdots p_{l}}} +  \xi x^{\frac{q_{l + 1}}{p_1p_2 \cdots p_{l+1}}}. \label{delta-a-0}
\end{align}
If condition \eqref{semigroup-criterion-1} holds for all $k$, $1 \leq k \leq l$, then it can be seen by a straightforward application of Algorithm \ref{essential-algorithm} that $f^{\vec a_0}_k$ is a polynomial for all $k$, $0 \leq k \leq l+1$. Since $f^{\vec a_0}_{l+1}$ is the {\em last} key polynomial of $\delta_{\vec a_0}$, Theorem \ref{key-thm-effective'} then implies that $\bar X_{\vec a_0}$ is algebraic, as required to prove the $(\Leftarrow)$ implication of Statement \ref{algebraic-existence'}.\\

Now we prove the $(\Leftarrow)$ implication of Statement \ref{non-algebraic-existence'}. If \eqref{semigroup-criterion-1} fails for some $k$, $1 \leq k \leq l$, take smallest such $k$. Then applying Algorithm \ref{essential-algorithm} for $\delta_{\vec a_0}$ with $\vec a_0$ as in \eqref{delta-a-0} shows that $f^{\vec a_0}_{k+1}$ is {\em not} a polynomial, so that $\bar X_{\vec a_0}$ is not algebraic (Theorem \ref{key-thm-effective'}). Now assume that \eqref{semigroup-criterion-1} holds for all $k$, $1 \leq k \leq l$, but there exists $k$, $1 \leq k \leq l$ such that \eqref{semigroup-criterion-2} fails; w.l.o.g.\ assume that $k$ is the smallest integer such that \eqref{semigroup-criterion-2} fails. Let $\tilde \omega$ be the largest element in $(\omega_{k+1}, p_k \omega_k) \cap \zz\langle \omega_0, \ldots, \omega_k \rangle \setminus \zz_{\geq 0}\langle \omega_0, \ldots, \omega_k \rangle$. Let $r := p_k\omega_k  - \tilde \omega$ and consider 
\begin{align*}
\tilde \phi_{\vec a_1}(x,\xi) 
	&:= \left( x^{\frac{q_1}{p_1}} + \cdots + x^{\frac{q_k}{p_1\cdots p_k}} \right) + x^{\frac{q_k-r}{p_1\cdots p_k}} + \left( x^{\frac{q_{k + 1}}{p_1p_2 \cdots p_{k+1}}} + \cdots + x^{\frac{q_{l}}{p_1p_2 \cdots p_{l}}} \right) +  \xi x^{\frac{q_{l + 1}}{p_1p_2 \cdots p_{l+1}}}.
\end{align*}
Then Algorithm \ref{essential-algorithm} shows that $f^{\vec a_1}_{k+1}$ is {\em not} a polynomial. It follows that $\bar X_{\vec a_1}$ is not algebraic (Theorem \ref{key-thm-effective'}), which completes the proof of $(\Leftarrow)$ implication of Statement \ref{non-algebraic-existence'}. \\

Finally, a straightforward examination of Algorithm \ref{essential-algorithm} shows that if both \eqref{semigroup-criterion-1} and \eqref{semigroup-criterion-2} holds for all $k$, $1 \leq k \leq l$, then $f_k$ is polynomial for each $k$, $0 \leq k \leq l+1$. This proves the $(\im)$ implication of Statement \ref{non-algebraic-existence'} and completes the proof of Theorem \ref{semigroup-prop}.
\end{proof}

\section{Proof of Proposition \ref{key-prop}} \label{sec-key-proof}
In Section \ref{sec-star} we define some operations on degree-wise Puiseux series that we use in the proof of Proposition \ref{key-prop}. In Section \ref{lifting-section} we prepare the tools to compare `lifts' in the rings $B_k$ of Section \ref{essential-section} of polynomials in $\cc[x,y]$. More precisely, let $\dpsxc$ be the field of degree-wise Puiseux series in $x$, $B_k := \cc[x,x^{-1}, y_1, \ldots, y_k]$ and $\pi: B_k \to \cc[x,x^{-1},y]$ be as in Section \ref{essential-section}. Given $f \in \dpsxc[y]$, Lemma \ref{F-Phi} (which follows from a result of \cite{abhyankar-expansion}) gives a `canonical' element $F^\pi_f \in \tilde B_k := \dpsxc[y_1, \ldots, y_k] \supseteq B_k$ such that $\pi(F^\pi_f) = f$. Given certain polynomials $f, g \in \cc[x,y]$ which are {\em close} (in the sense that their degree-wise Puiseux expansions agree up to certain exponent), in the proof of Proposition \ref{key-prop} we need to compare how `close' $F^\pi_f$ and $F^\pi_g$ are; this estimate is provided by Lemma \ref{limit-of-agreement}. Lemma \ref{poly-positive-prop} of Section \ref{ordered-section} determines the generators of the graded ring $\cc[x,y]^\delta$ (from Section \ref{degree-like-section}) associated to the compactification of $\cc^2$ corresponding to a semidegree $\delta$ and a Gr\"obner basis of the ideal of the hypersurface at infinity under the assumption that all key forms of $\delta$ are polynomials and their $\delta$-values are positive. Finally in Section \ref{key-proof-subsection} we prove Proposition \ref{key-prop} based on Lemmas \ref{limit-of-agreement} and \ref{poly-positive-prop}. We prove the latter lemmas in Section \ref{sec-technical-proofs}.

\subsection{Some operations on degree-wise Puiseux series} \label{sec-star}
\begin{defn} \label{star-defn}
Let $\phi = \sum_j a_j x^{q_j/p} \in \dpsxc$ be a degree-wise Puiseux series with polydromy order $p$ and $r$ be a multiple of $p$. Then for all $c \in \cc$ we define 
$$c \star_r \phi := \sum_j a_j c^{q_jr/p}x^{q_j/p}.$$  
Let $y_1, \ldots, y_k$ be indeterminates and $\Phi = \sum_{\alpha \in \zz_{\geq 0}^k} \phi_\alpha(x)y_1^{\alpha_1} \cdots y_k^{\alpha_k} \in \tilde B_k := \dpsxc[y_1, \ldots, y_k]$. The {\em polydromy order} of $\Phi$ is the lowest common multiple of the polydromy orders of all the non-zero $\phi_\alpha$'s. Let $r$ be a multiple of the polydromy order $\Phi$. Then we define
$$c \star_r \Phi := \sum_\alpha \left(c \star_r \phi_\alpha\right) y_1^{\alpha_1} \cdots y_k^{\alpha_k}.$$   
\end{defn}

\begin{rem}
It is straightforward to see that in the case that $c$ is an $r$-th root of unity (and $r$ is a multiple of the polydromy order of $\phi$), $c \star_r \phi$ is a {\em conjugate} of $\phi$ (cf.\ Remark-Notation \ref{f-phi}).
\end{rem}

\begin{lemma} \label{star-lemma}
\mbox{}
\begin{enumerate}
\item \label{different-stars} Let $p$ be the polydromy order of $\Phi \in \tilde B_k$, $d$ and $e$ be positive integers, and $c \in \cc$. Then $c \star_{pde} \Phi = c^e \star_{pd} \Phi = c^{de} \star_p \Phi$.
\item \label{star-sum-product} Let $\Phi_j = \sum_{j} \phi_{j,\alpha}(x)y_1^{\alpha_1} \cdots y_k^{\alpha_k} \in \tilde B_k$ for $j = 1,2$, and $r$ be a multiple of the polydromy order of each non-zero $\phi_{j,\alpha}$. Then $c \star_r \left(\Phi_1 + \Phi_2\right) = \left(c \star_r \Phi_1\right) + \left(c \star_r \Phi_2\right)$ and $c \star_r \left(\Phi_1\Phi_2\right) = \left(c \star_r \Phi_1\right)\left(c \star_r \Phi_2\right)$. 
\item \label{star-projection} Let $\pi : \tilde B_k \to \dpsxc[y]$ be a $\cc$-algebra homomorphism that sends $x \mapsto x$ and $y_j \mapsto f_j \in \cc[x, x^{-1}, y]$ for $1 \leq j \leq k$. Let $\Phi = \sum_{\alpha} \phi_\alpha(x)y_1^{\alpha_1} \cdots y_k^{\alpha_k} \in \tilde B_k$, $r$ be a multiple of the polydromy order of each non-zero $\phi_\alpha$, and $\mu$ be a (not necessarily primitive) $r$-th root of unity. Then $\pi (\mu \star_r \Phi) = \mu \star_r \pi(\Phi)$.
\end{enumerate}
\end{lemma}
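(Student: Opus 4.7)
The plan is to verify all three assertions by direct computation from Definition \ref{star-defn}, reducing first to the case of a single Puiseux series in $\dpsxc$ and then extending coefficient-wise to $\tilde B_k$.

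For (1), write $\phi = \sum_j a_j x^{q_j/p} \in \dpsxc$ of polydromy order $p$. A direct bookkeeping shows that in each of $c \star_{pde} \phi$, $c^e \star_{pd} \phi$, and $c^{de} \star_p \phi$, the coefficient $a_j$ gets multiplied by $c^{q_j de}$; hence all three expressions coincide. The extension from $\dpsxc$ to $\tilde B_k$ is immediate because $\star_r$ acts only on coefficients, leaving the $y_j$'s fixed. For (2), additivity is tautological. For multiplicativity it suffices — by additivity and the fact that $\star_r$ fixes the $y_j$'s — to verify the case $\Phi_1,\Phi_2 \in \dpsxc$; writing $\phi_1 = \sum_j a_j x^{q_j/p}$ and $\phi_2 = \sum_k b_k x^{s_k/p}$, both $c \star_r (\phi_1 \phi_2)$ and $(c \star_r \phi_1)(c \star_r \phi_2)$ expand to $\sum_{j,k} a_j b_k c^{(q_j+s_k)r/p} x^{(q_j+s_k)/p}$.

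For (3), part (2) shows that $\mu \star_r$ is a $\cc$-algebra endomorphism of the subring of $\tilde B_k$ consisting of elements whose polydromy order divides $r$, and likewise on the corresponding subring of $\dpsxc[y]$. Therefore $\pi \circ (\mu \star_r)$ and $(\mu \star_r) \circ \pi$ are two $\cc$-algebra homomorphisms, and it suffices to check that they agree on the generators $x,y_1,\ldots,y_k$ of $\tilde B_k$. On $x$ both give $\mu^r x = x$, since $\mu$ is an $r$-th root of unity. On $y_j$ both give $\mu \star_r f_j$; since $f_j \in \cc[x,x^{-1},y]$ involves only integer powers of $x$, and $\mu \star_r$ multiplies $x^a$ by $\mu^{ar} = 1$ for every integer $a$, this simply equals $f_j$.

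The lemma is routine bookkeeping and there is no substantial obstacle; the only care required is to confirm well-definedness at each step, namely that the polydromy order of the argument of $\star_r$ divides $r$ — which holds automatically throughout (for part (3), because the polydromy order of $\pi(\Phi)$ divides the least common multiple of the polydromy orders of the $\phi_\alpha$'s, which divides $r$ by hypothesis).
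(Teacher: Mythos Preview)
Your argument is essentially the same as the paper's --- both hinge on part (2) together with the observation that $\mu \star_r f_j = f_j$ because $f_j \in \cc[x,x^{-1},y]$ and $\mu^r = 1$. The paper just writes out the computation directly:
\[
\mu \star_r \pi(\Phi) = \sum_\alpha (\mu \star_r \phi_\alpha)(\mu \star_r f_1)^{\alpha_1}\cdots(\mu \star_r f_k)^{\alpha_k} = \sum_\alpha (\mu \star_r \phi_\alpha) f_1^{\alpha_1}\cdots f_k^{\alpha_k} = \pi(\mu \star_r \Phi),
\]
whereas you package it as ``two $\cc$-algebra homomorphisms agreeing on generators.''

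There is one slip in your version: $x, y_1, \ldots, y_k$ do \emph{not} generate $\tilde B_k = \dpsxc[y_1,\ldots,y_k]$ (nor its subring of elements with polydromy order dividing $r$) as a $\cc$-algebra --- the coefficient field $\dpsxc$ is far larger than $\cc[x]$. The fix is immediate: check agreement on all of $\cc((x^{-1/r}))$ rather than just on $x$. On any $\phi$ in the coefficient ring both $\pi\circ(\mu\star_r)$ and $(\mu\star_r)\circ\pi$ send $\phi \mapsto \mu\star_r\phi$, since $\pi$ restricts to the identity there; together with your check on the $y_j$'s, this suffices because every element of the subring is a polynomial in the $y_j$'s with coefficients in $\cc((x^{-1/r}))$.
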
  

\begin{proof}
Assertions \ref{different-stars} and \ref{star-sum-product} are immediate from the definitions, here we prove Assertion \ref{star-projection}.  Let $\Phi$, $r$ and $\mu$ be as in Assertion \ref{star-projection}. Then 
\begin{align*}
\mu \star_r \pi(\Phi) 
	&=  \mu \star_r \left( \sum_{\alpha} \phi_\alpha(x)f_1^{\alpha_1} \cdots f_k^{\alpha_k} \right) \\
	&= \sum_\alpha \left(\mu \star_r \phi_\alpha\right) (\mu \star_r f_1)^{\alpha_1} \cdots (\mu \star_r f_k)^{\alpha_k}\quad \text{(due to Assertion \ref{star-sum-product})}\\
	&= \sum_\alpha \left(\mu \star_r \phi_\alpha\right) f_1^{\alpha_1} \cdots f_k^{\alpha_k} \quad \text{(since $\mu^{rn} = 1$ for all $n \in \zz$)}\\
	&= \pi(\mu \star_r \Phi) \qed
\end{align*}
\noqed
\end{proof}

\begin{remtation} \label{f-phi}
If $\phi$ is a degree-wise Puiseux series in $x$ with polydromy order $p$, then we write
$$f_\phi := \prod_{\parbox{1.5cm}{\scriptsize{$\phi_{j}$ is a con\-ju\-ga\-te of $\phi$}}}\mkern-18mu \left(y - \phi_{j}(x)\right) = \prod_{j=0}^{p-1} \left(y - \zeta^j \star_p \phi(x)\right),$$
where $\zeta$ is a primitive $p$-th root of unity. If $f \in \cc[x,y]$, then its degree-wise Puiseux factorization (Theorem \ref{dpuiseux-factorization}) can be described as follows: there are unique (up to conjugacy) degree-wise Puiseux series $\phi_1, \ldots, \phi_k$, a unique non-negative integer $m$, and $c \in \cc^*$ such that
$$f = cx^m \prod_{i=1}^k f_{\phi_i}$$
Let $(q_1, p_1), \ldots, (q_l, p_l)$ be Puiseux pairs of $\phi$. Set $p_0 :=1$. For each $k$, $0 \leq k \leq l$, we write 
$$f_\phi^{(k)} := \prod_{j=0}^{p_0p_1 \cdots p_k-1} \left(y - \zeta^j \star_{p} \phi(x)\right),$$
where $\zeta$ is a primitive $(p_1 \cdots p_l)$-th root of unity. Note that $f_\phi^{(l)}  = f_\phi$, and for each $m,n$, $0 \leq m < n \leq l$, 
\begin{align}
f_\phi^{(n)} 
			&= \prod_{j=0}^{p_0p_1 \cdots p_{n}-1} \left(y - \zeta^j \star_{p} \phi(x)\right) 
			= \prod_{i=0}^{p_{m+1}\cdots p_n-1} \prod_{j=0}^{p_0p_1 \cdots p_m-1} \left(y - \zeta^{ip_0p_1 \cdots p_m +j} \star_{p} \phi(x)\right) \notag \\
			&= \prod_{i=0}^{p_{m+1}\cdots p_n-1} \zeta^{ip_0p_1 \cdots p_m} \star_{p} \left(\prod_{j=0}^{p_0p_1 \cdots p_m-1} \left(y - \zeta^{j} \star_{p} \phi(x)\right)\right) \notag\\
			&= \prod_{i=0}^{p_{m+1}\cdots p_n-1} \zeta^{ip_0p_1 \cdots p_m} \star_{p} \left(f_\phi^{(m)}\right). \label{f-phi-n-m}
\end{align}
\end{remtation}

\begin{convention}
In the following sections we frequently deal with maps $\cc[y_1, \ldots, y_k] \to \cc[x,x^{-1},y]$, where $y_1, \ldots, y_k$ are indeterminates. We always (unless there is a misprint!) use upper-case letters $F,G, \ldots$ for elements in $\cc[y_0, \ldots, y_k]$ and corresponding lower-case letters $f,g, \ldots$ for their images in $\cc[x,x^{-1},y]$. 
\end{convention}

\subsection{Comparing `canonical' pre-images of polynomials} \label{lifting-section}

\begin{lemma} \label{F-Phi}
Let $y_1, \ldots, y_k$ be indeterminates, $p_0 := 1, p_1, \ldots, p_{k-1}$ be positive integers, and $\pi:B_k := \cc[x,x^{-1},y_1, \ldots, y_k] \to \cc[x,x^{-1},y]$ be a ring homomorphism which sends $x \mapsto x$ and $y_j \mapsto f_j$, where $f_j$ is monic in $y$ of degree $p_0 \cdots p_{j-1}$, $1 \leq j \leq k$. Let $\tilde B_k := \dpsxc[y_1, \ldots, y_k] \supseteq B_k$. Then $\pi$ induces a homomorphism $\tilde B_k \to \dpsxc[y]$ which we also denote by $\pi$. If $f$ is a non-zero element in $\dpsxc[y]$, then there is a unique $F^\pi_f \in \tilde B_k$ such that 
\begin{enumerate}
\item $\pi\left(F^\pi_f \right) = f$ and
\item $\deg_{y_j}(F^\pi_f) < p_{j}$ for all $j$, $1 \leq j \leq k-1$.
\end{enumerate} 
Moreover, if $f$ is monic in $y$ of degree $p_1 \cdots p_{k-1}p_k$ for some integer $p_k$, then 
\begin{enumerate}
\addtocounter{enumi}{2}
\item \label{F-Phi-monic} $F^{\pi}_f$ is monic in $y_k$ of degree $p_k$. 
\item \label{F-Phi-exponents} If the coefficient of $x^\alpha y_1^{\beta_1}\cdots y_k^{\beta_k}$ in $F^\pi_f - y_k^{p_k}$ is non-zero, then $\sum_{i=1}^j p_0 \cdots p_{i-1}\beta_i < p_1 \cdots p_j$ for all $j$, $1 \leq j \leq k$.
\end{enumerate}
\end{lemma}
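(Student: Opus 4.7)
The plan is to construct $F^\pi_f$ by iterated Euclidean division in $\dpsxc[y]$---this is the Abhyankar expansion alluded to in the paragraph preceding the statement. The key bookkeeping observation is that since each $f_j$ is monic in $y$ of degree $d_j := p_0 p_1 \cdots p_{j-1}$, the product $f_1^{\beta_1} \cdots f_k^{\beta_k}$ is monic in $y$ of degree $N(\vec\beta) := \sum_{j=1}^{k} \beta_j d_j$; and the map $\vec\beta \mapsto N(\vec\beta)$ is a bijection from $\{\vec\beta \in \zz_{\geq 0}^k : \beta_j < p_j \text{ for } j < k\}$ onto $\zz_{\geq 0}$ (the mixed-radix numeration with place values $d_1, \ldots, d_k$ and digit caps $p_1, \ldots, p_{k-1}$, the last digit unbounded).

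For existence I would divide $f$ repeatedly by $f_k$ to produce an $f_k$-adic expansion $f = \sum_{\beta_k \geq 0} f_k^{\beta_k} h_{\beta_k}$ with $\deg_y h_{\beta_k} < d_k = p_{k-1} d_{k-1}$. This bound then lets me expand each $h_{\beta_k}$ in $f_{k-1}$-adic form with exponent strictly less than $p_{k-1}$; iterating downward through $f_{k-2}, \ldots, f_1$ (each step preserves the bounds already imposed, because $f_{j-1}^{p_{j-1}}$ first reaches $y$-degree $d_j$) produces the required $F^\pi_f = \sum_{\vec\beta} c_{\vec\beta}(x)\, y_1^{\beta_1} \cdots y_k^{\beta_k}$ with $\beta_j < p_j$ for $j < k$.

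For uniqueness, suppose $G \in \tilde B_k$ satisfies the same degree constraints and $\pi(G) = 0$. Since distinct admissible $\vec\beta$'s give distinct $N(\vec\beta)$'s, the monomial of $G$ with maximal $N(\vec\beta)$ contributes, via $\pi$, a $y$-leading term of $\pi(G)$ which no other monomial can cancel, so its coefficient in $\dpsxc$ must vanish; iterating yields $G = 0$.

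Finally, assertions \ref{F-Phi-monic} and \ref{F-Phi-exponents} follow from the same bijection. Under the constraints $\beta_j < p_j$ for $j < k$, the unique $\vec\beta$ with $N(\vec\beta) = p_1 \cdots p_k$ is $(0, \ldots, 0, p_k)$, so when $f$ is monic of $y$-degree $p_1 \cdots p_k$ the coefficient of $y_k^{p_k}$ in $F^\pi_f$ must equal $1$ and no $\vec\beta$ with $N(\vec\beta) \geq p_1 \cdots p_k$ other than $(0, \ldots, 0, p_k)$ may appear in $F^\pi_f$; for $j = k$ this is precisely assertion \ref{F-Phi-exponents}, while for $j < k$ the telescoping estimate $\sum_{i=1}^{j} \beta_i d_i \leq \sum_{i=1}^{j} (p_i - 1) d_i = p_1 \cdots p_j - 1$ gives the remaining inequalities. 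The only (mild) obstacle is careful bookkeeping during the iterated divisions; conceptually the entire lemma reduces to the mixed-radix bijection.
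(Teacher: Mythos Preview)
Your proof is correct and is precisely the Abhyankar $G$-adic expansion that the paper invokes: the paper's own proof consists of a single sentence citing \cite[Theorem 2.13]{abhyankar-expansion}, and what you have written is exactly the iterated-division argument underlying that theorem, together with the mixed-radix bijection $\vec\beta \mapsto \sum_j \beta_j d_j$ that makes uniqueness and the degree bounds transparent. There is nothing to add.
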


\begin{proof}
This follows from an immediate application of \cite[Theorem 2.13]{abhyankar-expansion}.
\end{proof}

Now assume $\delta$ is the semidegree on $\cc[x,y]$ with the degree-wise Puiseux series $\tilde \phi_\delta$ as in \eqref{tilde-phi-delta}. Fix $k$, $0 \leq k \leq l$. Let $\pi:B \to \cc[x,x^{-1},y]$ and $B_k \subseteq B$ be as in Section \ref{essential-section} (defined immediately following Corollary \ref{essential-corollary}). Let $\pi_k := \pi|_{B_k}$ and $\tilde B_k := \dpsxc[y_1, \ldots, y_k] \supseteq B_k$. Let $\omega$ be the weighted degree on $B$ from Section \ref{essential-section}. Note that $\omega$ induces a weighted degree on $\tilde B_k$ which we also denote by $\omega$.

\begin{remtation} \label{equivalent-remark}
If $\phi$ and $\psi$ are two degree-wise Puiseux series in $x$ and $r \in \qq$, we write $\phi \equiv_r \psi$ iff $\deg(\phi - \psi) \leq r$. Let $\phi_{k+1}$ be as in \eqref{phi-k} and set $r_{k+1} := q_{k+1}/(p_1 \cdots p_{k+1})$. We write $F_{\phi_{k+1}}$ for $F_{f_{\phi_{k+1}}}$, where $f_{\phi_{k+1}}$ is as in Remark-Notation \ref{f-phi}. Let $\psi \in \dpsxc$ be such that $\psi \equiv_{r_{k+1}} \phi_{k+1}$. Then it follows in particular that the first $k$ Puiseux pairs of $\psi$ are $(q_1, p_1), \ldots, (q_k, p_k)$ (of course $\psi$ may have more Puiseux pairs), so that $f_{\psi}^{(k)}$ is well defined (see Remark-Notation \ref{f-phi}). In this case we define 
\begin{align*}
F_{\psi}^{(k)} := \begin{cases}
					F^{\pi_1}_{f_{\psi}^{(0)}} \in \tilde B_1 & \text{for}\ k = 0,\\
					F^{\pi_k}_{f_{\psi}^{(k)}} \in \tilde B_k & \text{for}\ 1 \leq k \leq l.
				  \end{cases} 
\end{align*}
(We needed to treat the case $k=0$ separately in the definition of $F_{\psi}^{(k)}$, since $f_{\psi}^{(0)} = y - \psi(x)$ is an element of $\tilde B_1$, whereas for $k \geq 1$, $f_{\psi}^{(k)}$ is an element of $\tilde B_k$.)
\end{remtation}

\begin{lemma}  \label{limit-of-agreement}
Recall that $\omega_k := \omega(y_k)$, $1 \leq k \leq l+1$. Fix $k$, $0 \leq k \leq l$. Let $F_{k+1} \in B_k$ be as in \eqref{F-k} and $r_{k+1}$ be as in Remark-Notation \ref{equivalent-remark}.
\begin{enumerate} 
\item \label{F-k+1-agreement} $F_1 = F_{\phi_1} = y_1$. For $k \geq 1$, $F_{k+1}$ is precisely the sum of all monomial terms $H$ (in $x,y_1, \ldots, y_k$) of $F_{\phi_{k+1}}$ such that $\omega(H) > \omega_{k+1}$.
\item \label{F-phi-psi-agreement} Let $\psi \in \dpsxc$ satisfy $\psi \equiv_{r_{k+1}} \phi_{k+1}$. Then 
$$\omega\left(F_{\psi}^{(k)} - F_{\phi_{k+1}}\right) \leq \omega_{k+1}.$$
\end{enumerate}
\end{lemma}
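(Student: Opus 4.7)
The plan is to analyze the canonical lifts from Lemma \ref{F-Phi} via the weighted degree $\omega$, handling the two assertions in parallel around a common underlying observation. For Assertion (1) the base case $k=0$ is immediate: Algorithm \ref{essential-algorithm} gives $F_1 = y_1$, and since $\pi_1(y_1) = f_1$, Lemma \ref{F-Phi} yields $F_{\phi_1} = y_1$. For $k \geq 1$ I would first note that $F_{k+1}$ (from \eqref{F-k}) and $F_{\phi_{k+1}}$ (from Lemma \ref{F-Phi}) both lie in $B_k$, are monic in $y_k$ of degree $p_k$, and have all non-leading monomials satisfying $\deg_{y_j} < p_j$ for $1 \leq j \leq k-1$; moreover, the non-leading monomials of $F_{k+1}$ satisfy $\deg_{y_k} < p_k$ via the bound $\beta^i_{k,k} < p_k$. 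After the leading $y_k^{p_k}$ cancels, the difference $F_{\phi_{k+1}} - F_{k+1}$ lies in \emph{standard form}, i.e. $\deg_{y_j} < p_j$ for every $1 \leq j \leq k$.

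The key observation I would establish is this: for any $G \in B_k$ in standard form, $\omega(G) = \delta(\pi_k(G))$. This follows from Property \ref{unique-omega-combination} of Proposition \ref{essential-prop}: because $x$-exponents are integral in $B_k$, each value in the semigroup $\zz\langle \omega_0,\ldots,\omega_k\rangle$ is the $\omega$-value of a \emph{unique} standard monomial, so no cancellation of the leading $\omega$-term can occur under $\pi_k$. Applied to $G = F_{\phi_{k+1}} - F_{k+1}$, this observation combined with Property \ref{f-phi-k} of Proposition \ref{essential-prop} (which gives $\delta(f_{\phi_{k+1}} - f_{k+1}) \leq \omega_{k+1}$) yields $\omega(F_{\phi_{k+1}} - F_{k+1}) \leq \omega_{k+1}$. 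Since every monomial of $F_{k+1}$ has $\omega > \omega_{k+1}$ by Property \ref{ess-next-property} of Corollary \ref{essential-corollary}, this bound forces $F_{\phi_{k+1}}$ and $F_{k+1}$ to share precisely the same set of monomials of $\omega$-value exceeding $\omega_{k+1}$, with matching coefficients---which is Assertion (1).

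For Assertion (2) the analogous preliminary setup puts $F_\psi^{(k)} - F_{\phi_{k+1}}$ in standard form within $\tilde B_k$, but the key observation does \emph{not} extend verbatim: with rational $x$-exponents drawn from $\dpsxc$, distinct standard monomials can share $\omega$-values and cancellation under $\pi_k$ can depress $\delta$ below $\omega$. I would therefore compute the canonical lift $F^{\pi_k}_{f_\psi^{(k)} - f_{\phi_{k+1}}}$ directly via the telescoping identity applied to $f_\psi^{(k)} = \prod_{j=0}^{N-1}(y - \zeta^j\star\psi)$ (and similarly for $\phi_{k+1}$), with $N = p_1\cdots p_k$: each summand carries a perturbation factor $\zeta^i\star(\phi_{k+1} - \psi)$ of $\deg_x \leq r_{k+1}$ (hence contributing $\omega$-value $\leq r_{k+1}\omega_0$), while the remaining $N-1$ linear factors contribute at most the ``generic'' $\omega$-budget of the norm $f_{\phi_{k+1}}$, namely $\omega_{k+1} - r_{k+1}\omega_0$, by the telescoping identity $\omega_{k+1} = p_k\omega_k + (q_{k+1} - q_kp_{k+1})p_{k+2}\cdots p_{l+1}$ of Property \ref{omega-k-k+1}. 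Converting each summand to canonical form in $\tilde B_k$ by the substitution $y \mapsto y_1 + \phi_1$ followed by successive modular reductions $y_j^{p_j} \mapsto F_{j+1} + (\text{lower})$ preserves or lowers the running $\omega$-bound, and summation over $i$ yields $\omega(F_\psi^{(k)} - F_{\phi_{k+1}}) \leq \omega_{k+1}$.

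The main obstacle will be the $\omega$-tracking through the canonical-form reduction in Assertion (2): each modular substitution, and each step that rewrites a polynomial in $y$ with degree-wise Puiseux coefficients into the $y_1,\ldots,y_k$-basis, must be shown to preserve the running $\omega$-bound. Verifying this requires a delicate induction on the level $k$ together with careful bookkeeping of the degree-wise Puiseux-series degrees at every intermediate step---essentially a quantitative refinement of the key observation, valid on the restricted subclass of $\tilde B_k$-elements arising as canonical lifts of differences of such norm polynomials.
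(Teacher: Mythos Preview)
Your treatment of Assertion \ref{F-k+1-agreement} is correct and is essentially the paper's own argument: the ``key observation'' you isolate is exactly Lemma \ref{essential-omega-lemma}, and the rest follows as you say from Property \ref{f-phi-k} of Proposition \ref{essential-prop}.

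For Assertion \ref{F-phi-psi-agreement} your direct telescoping route differs from the paper's and, as written, has a real gap. The bound you assert for the product of the ``remaining $N-1$ linear factors'' is not justified. If you first substitute $y \mapsto y_1 + \phi_1$, each linear factor acquires $\omega$-value $\omega_1$, so each telescoping summand carries $\omega$-value at most $(N-1)\omega_1 + r_{k+1}\omega_0$ before any reduction; but for $k \geq 2$ one checks directly (e.g.\ at $k=2$ the excess is $(p_2-1)(q_1p_2 - q_2)p_3 \cdots p_{l+1} > 0$) that this strictly exceeds $\omega_{k+1}$. Since the modular reductions $y_j^{p_j} \mapsto y_{j+1} - (F_{j+1} - y_j^{p_j})$ are only $\omega$-\emph{non-increasing} (the term $c_{j,0}x^{\beta^0_{j,0}}y_1^{\beta^0_{j,1}}\cdots y_{j-1}^{\beta^0_{j,j-1}}$ in $F_{j+1}$ has $\omega$-value exactly $p_j\omega_j$), they cannot by themselves bring the naive bound down to $\omega_{k+1}$. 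What is missing is the recognition, at each intermediate level $j$, that a full block of $p_j$ conjugate linear factors assembles into something whose canonical lift begins with $y_{j+1}$ rather than $y_j^{p_j}$; this is what buys the drop from $p_j\omega_j$ to $\omega_{j+1}$, and it must be invoked inductively.

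The paper supplies exactly this mechanism. It proves a strengthening (Corollary \ref{stronger-phi-psi-agreement}): $\omega(F_\psi^{(j)} - F_{\phi_{k+1}}^{(j)}) \leq \omega_{j+1,k+1}$ for every $0 \leq j \leq k$, by induction on $j$. The inductive step (Lemma \ref{F-phi-psi-inductive-agreement}) uses the $\star$-operation to write $f_{\psi}^{(j+1)}$ as a product of $p_{j+1}$ conjugates of $f_{\psi}^{(j)}$, lifts this to a product $G_{0} \in \tilde B_{j+1}$ of $p_{j+1}$ factors of the form $y_{j+1} + (\omega \leq \omega_{j+1}) + (\omega \leq w,\ \psi\text{-dependent})$, and then shows (Claim \ref{constructing-F-k}) that $F_\psi^{(j+1)}$ is obtained from $G_{0}$ by finitely many $\omega$-non-increasing substitutions. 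In the expansion of $G_{0}$, any monomial involving a $\psi$-dependent factor already has $\omega \leq (p_{j+1}-1)\omega_{j+1} + w$, so the part of $F_\psi^{(j+1)}$ above that threshold is $\psi$-independent; the recursion $\omega_{j+2,k+1} = (p_{j+1}-1)\omega_{j+1} + \omega_{j+1,k+1}$ of Proposition \ref{essential-prop}\ref{omega-k-k+1} then closes the induction. The ``delicate induction on the level $k$'' you anticipate at the end is precisely this, but to make it go through you must abandon the global $N$-term telescoping and instead pass one Puiseux pair at a time, so that at each stage only $p_{j+1}$ factors are multiplied and the inductive hypothesis places the $\psi$-dependence at the correct depth.
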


\subsection{Determining $\cc[x,y]^{\delta}$ when key forms of $\delta$ are polynomial} \label{ordered-section}
We continue with the notations of Section \ref{lifting-section}. Fix $k$, $1 \leq k \leq l+1$. In this section we assume the following conditions are satisfied:
\begin{enumerate}
\renewcommand{\theenumi}{$\text{Positivity}_k$}
\item $\delta_k(f_j) > 0$ for all $j$, $0 \leq j \leq k$. \label{positivity-cond}
\renewcommand{\theenumi}{$\text{Polynomial}_k$}
\item $f_j \in \cc[x,y]$ for all $j$, $0 \leq j \leq k$. \label{polynomial-cond}
\end{enumerate} 

\begin{rem} \label{polytivity-remark}
A straightforward examination of Algorithm \ref{essential-algorithm} shows that \eqref{polynomial-cond} implies ($\text{Positivity}_{k-1}$).
\end{rem}

Let $A_k := \cc[x,y_1, \ldots, y_k] \subseteq B_k$. We now define a monomial ordering on $A_k$. Let $v_k := (\omega_0, \ldots, \omega_k)$ and $\prec_k$ be the ordering on $\zz_{\geq 0}^{k+1}$ by setting $(\beta_0, \ldots, \beta_k) \prec_k (\beta'_0, \ldots, \beta'_k)$ iff 
\begin{enumerate}
\item $(\beta_0 - \beta'_0,\ldots, \beta_k - \beta'_k) \cdot v_k < 0$, or  
\item $(\beta_0 - \beta'_0, \ldots, \beta_k - \beta'_k) \cdot v_k = 0$ and the right-most non-zero entry of $(\beta_0 - \beta'_0, \ldots, \beta_k - \beta'_k)$ is negative.
\end{enumerate}
It follows from the definition that $\prec_k$ is a total ordering on $\zz_{\geq 0}^{k+1}$. \eqref{positivity-cond} implies that $\prec_k$ is in fact a well order which is compatible with addition on $\zz_{\geq 0}^{k+1}$, and therefore it induces a monomial ordering on $A_k$, which we also denote by $\prec_k$. Let $z$ be a new indeterminate and $\tilde A_k := A_k[z]$. Extend $\omega$ to a weighted degree on $\tilde A_k$ by defining $\omega(z) := 1$. Recall that the essential key forms of $\delta_k$ are $f_0, \ldots, f_k$. Let $F_1, \ldots, F_k$ be as in Property \ref{F-k} and for each $j$, $1 \leq j \leq k$, let $\tilde F_j$ be the {\em homogenization} of $F_j$ with respect to $z$, i.e.\
\begin{align} \label{tilde-F-j}
\tilde F_{j+1} &:= \begin{cases}
			y_1 & \text{if}\ j=0,\\
			y_{j}^{p_{j}} - \sum_{i=0}^{j_n}c_{j,i} x^{\beta^i_{j,0}} y_1^{\beta^i_{j,1}} \cdots y_{j}^{\beta^i_{j,j}}z^{p_j\omega_j - \sum_{i'} \beta^i_{j,i'}\omega_{i'}}, 
				& \text{for}\ 1 \leq j \leq k-1,
			\end{cases} 
\end{align}
where $c_{j,i}$ and $\beta^i_{j,i'}$'s are as in Property \ref{F-k}. Finally, for $2 \leq j \leq k$, let $H_j$ be the {\em leading form} of $\tilde F_j$ with respect to $\omega$, i.e.
\begin{align}
H_{j+1} := y_{j}^{p_j} - c_{j,0} x^{\beta^0_{j,0}} y_1^{\beta^0_{j,1}} \cdots y_{j-1}^{\beta^0_{j,j-1}},\quad 1 \leq j \leq k-1.
\end{align} 

\begin{lemma} \label{poly-positive-prop}
Assume \eqref{positivity-cond} and \eqref{polynomial-cond} hold. Then 
\begin{enumerate}
\item \label{wp-assertion} Let $\tilde J_k$ be the ideal in $\tilde A_k$ generated by $y_{j+1}z^{p_j\omega_j-\omega_{j+1}} - \tilde F_{j+1}$, $1 \leq j \leq k-1$. Then $\cc[x,y]^{\delta_k} \cong \tilde A_k/\tilde J_k$. 
\item \label{grobner-assertion} Let $J_k$ be the ideal in $A_k$ generated by the {\em leading weighted homogeneous forms} (with respect to $\omega$) of polynomials $F \in A_k$ such that $\delta_k(\pi_k(F)) < \omega(F)$. Then $\scrB_k :=(H_{k}, \ldots, H_2)$ is a Gr\"obner basis of $J_k$ with respect to $\prec_k$.
\end{enumerate}
\end{lemma}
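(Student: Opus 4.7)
The plan is to establish \ref{wp-assertion} by exhibiting an explicit graded homomorphism $\Psi:\tilde A_k\to\cc[x,y]^{\delta_k}$ whose kernel equals $\tilde J_k$, then derive \ref{grobner-assertion} by quotienting out by the ideal generated by $t$ (equivalently $z$) and invoking Buchberger's first criterion on the resulting presentation. More precisely, endow $\tilde A_k$ with the $\zz$-grading $\deg(x)=\omega_0$, $\deg(y_j)=\omega_j$, $\deg(z)=1$, and identify $\cc[x,y]^{\delta_k}$ with $\sum_d F^{\delta_k}_d\,t^d\subseteq\cc[x,y][t]$ as in Remark~\ref{(f)_d-remark}. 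Define $\Psi$ by $x\mapsto xt^{\omega_0}$, $y_j\mapsto f_j t^{\omega_j}$, $z\mapsto t$. Each generator $y_{j+1}z^{p_j\omega_j-\omega_{j+1}}-\tilde F_{j+1}$ of $\tilde J_k$ lies in $\ker\Psi$ by a direct check using the recursion for $f_{j+1}$ from Property~\ref{ess-next-property}, the identity $p_j\omega_j=\sum_{i'}\beta^0_{j,i'}\omega_{i'}$ that calibrates the $t$-exponents, and Property~\ref{decreasing-omega} that reconciles the $z$-exponents of the remaining terms. Surjectivity follows by combining the generating Property~\ref{ess-generating-property} with Corollary~\ref{minimum-cor} cited in Remark~\ref{generating-remark} (this is where hypothesis~\ref{polynomial-cond} enters): any $f\in\cc[x,y]$ with $\delta_k(f)\leq d$ admits a lift $F\in A_k$ (not merely in $B_k$) with $\pi_k(F)=f$ and $\omega(F)\leq d$, and $z$-homogenizing $F$ to $\tilde F$ and multiplying by $z^{d-\omega(F)}$ produces a degree-$d$ preimage of $(f)_d$.

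The main obstacle is injectivity. Given a homogeneous $F\in\tilde A_k$ with $\Psi(F)=0$, I apply the relations $y_{j+1}z^{p_j\omega_j-\omega_{j+1}}\equiv\tilde F_{j+1}\pmod{\tilde J_k}$ for $j=k-1,k-2,\ldots,1$ to successively replace every occurrence of $y_j^{p_j}$; Property~\ref{exponent-bound} bounds $\beta^i_{j,j}<p_j$, so the procedure terminates in a normal form $F_0\equiv F\pmod{\tilde J_k}$ with $\deg_{y_j}(F_0)<p_j$ for all $1\leq j\leq k-1$. Because $t$ is a nonzerodivisor on $\cc[x,y][t]$, the identity $\Psi(F_0)=0$ forces $\pi_k(F_0|_{z=1})=0$ in $\cc[x,y]$. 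The uniqueness clause of Lemma~\ref{F-Phi} applied to the polynomial $0$ (whose trivial lift is $0$) then gives $F_0|_{z=1}=0$, and the homogeneity of $F_0$, which pins the $z$-exponent of every $(x,y_1,\ldots,y_k)$-monomial, yields $F_0=0$. Hence $F\in\tilde J_k$, completing the proof that $\Psi$ descends to an isomorphism $\tilde A_k/\tilde J_k\cong\cc[x,y]^{\delta_k}$.

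For \ref{grobner-assertion}, quotient the isomorphism of \ref{wp-assertion} by the ideal generated by $t$ (equivalently $z$). The left-hand side becomes $\gr\cc[x,y]^{\delta_k}$; on the right, setting $z=0$ annihilates $y_{j+1}z^{p_j\omega_j-\omega_{j+1}}$ (using $p_j\omega_j>\omega_{j+1}$ from Property~\ref{semigroup-property}) and sends $\tilde F_{j+1}$ to $H_{j+1}$ (only the $i=0$ term of $\tilde F_{j+1}$ has $z$-exponent zero by Property~\ref{decreasing-omega}, and the fact $\beta^0_{j,j}=0$ then gives exactly $H_{j+1}$). The resulting isomorphism $A_k/(H_2,\ldots,H_k)\cong\gr\cc[x,y]^{\delta_k}$ sends $y_j$ to the leading symbol of $f_j$, so $J_k$, being by definition the kernel of this map, equals $(H_2,\ldots,H_k)$. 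Finally, both monomials of $H_{j+1}$ share the same $\omega$-value $p_j\omega_j$, so the tie-breaking rule of $\prec_k$ (rightmost coordinate) identifies $y_j^{p_j}$ as the leading monomial; the monomials $y_1^{p_1},\ldots,y_{k-1}^{p_{k-1}}$ are pairwise coprime, so Buchberger's first criterion certifies that $\scrB_k$ is a Gr\"obner basis of $J_k$ with respect to $\prec_k$.
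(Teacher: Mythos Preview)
Your argument is correct, and it takes a genuinely different route from the paper in two places.

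For the kernel computation in \ref{wp-assertion}, the paper first establishes the semigroup-ring isomorphism $A_{j}/\bar J_{j+1}\cong\cc[S_j]$ (Lemma~\ref{semi-isomorphism}), deduces $J_k=(H_2,\ldots,H_k)$ (Corollary~\ref{J-korollary}), and then shows $\ker\tilde\pi_k=\tilde J_k$ by a descent on $\omega$-value using those generators. You bypass this chain: after reducing to a normal form with $\deg_{y_j}<p_j$ for $j<k$, you invoke the uniqueness clause of Lemma~\ref{F-Phi} directly. This is more elementary for the injectivity step itself, though you still rely on Corollary~\ref{minimum-cor} for surjectivity, and in the paper's logic that corollary is proved via Corollary~\ref{J-korollary} and hence via Lemma~\ref{semi-isomorphism}; so the net savings are organizational rather than absolute.

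For \ref{grobner-assertion}, the paper obtains the generating set $(H_2,\ldots,H_k)$ for $J_k$ from Corollary~\ref{J-korollary} and then checks the Gr\"obner property by explicitly dividing each $S(H_i,H_j)$ by $\scrB_k$. Your derivation of the generating set by reducing the isomorphism of \ref{wp-assertion} modulo $z$ (equivalently $t$) is a clean alternative, and your appeal to Buchberger's first criterion---the leading monomials $y_1^{p_1},\ldots,y_{k-1}^{p_{k-1}}$ are pairwise coprime---is the natural shortcut that the paper's explicit computation is in effect verifying by hand.

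One point that deserves a word more of care: your normal-form reduction ``for $j=k-1,k-2,\ldots,1$'' does not terminate after a single pass, since replacing $y_j^{p_j}$ by $y_{j+1}z^{p_j\omega_j-\omega_{j+1}}+\cdots$ can, after multiplication by the ambient monomial, push $\deg_{y_{j+1}}$ up to $p_{j+1}$ again. Termination does hold, but the quickest justification is that the generators $y_{j+1}z^{p_j\omega_j-\omega_{j+1}}-\tilde F_{j+1}$ form a Gr\"obner basis for $\tilde J_k$ with respect to any monomial order refining $\omega$ that breaks ties by \emph{reverse} $z$-degree (so that $y_j^{p_j}$ is the lead term of each generator); their lead terms are then pairwise coprime and the usual division algorithm yields the desired normal form.
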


\subsection{Proof of Proposition \ref{key-prop}} \label{key-proof-subsection}
We continue to use the notation of the preceding sections. 
\begin{proof}[Proof of Assertion \ref{easy-assertion} of Proposition \ref{key-prop}]
Let $\WP$ be the weighted projective space $\pp^{l+2}(1,\omega_0$, $\ldots$, $\omega_{l+1})$ with weighted homogeneous coordinates $[z:x:y_1: \cdots: y_{l+1}]$. Then Assertion \ref{easy-assertion} of Proposition \ref{key-prop} is equivalent to showing that $\proj \left(\cc[x,y]^\delta \right) \cong \proj \left(\tilde A_{l+1}/\tilde J_{l+1}\right)$, where $\tilde A_{l+1}$ and $\tilde J_{l+1}$ are as in Lemma \ref{poly-positive-prop}. Assertion \ref{wp-assertion} of Lemma \ref{poly-positive-prop} (applied with $k := l+1$) implies the latter statement and completes the proof.
\end{proof}

\begin{proof}[Proof of Assertion \ref{one-place-assertion} of Proposition \ref{key-prop}]
Note that the last key polynomial of $\delta$ is also the last {\em essential} key polynomial $f_{l+1}$. Applying Assertion \ref{f-k-substitution} of Proposition \ref{essential-prop} with $k = l+1$ implies that there exists at least one degree-wise Puiseux root $\psi(x)$ of $f_{l+1}$ such that \begin{align}
\psi(x) \equiv_{q_{l+1}/(p_1 \cdots p_{l+1})} \phi_{l+1}(x), \label{last-equivalence}
\end{align} 
which in turn implies that the first $l$ Puiseux pairs of $\psi$ are $(q_1, p_1), \ldots, (q_l,p_l)$ (cf.\ Remark-Notation \ref{equivalent-remark}). Since $f_{l+1}$ is monic in $y$ of degree $p_1\cdots p_l$ (Assertion \ref{f-k-y-deg} of Proposition \ref{essential-prop}), it follows that up to conjugacy $\psi(x)$ is the only degree-wise Puiseux root of $f_{l+1}$. The claim now follows from identity \eqref{last-equivalence}.
\end{proof}

\begin{proof}[Proof of Assertion \ref{hard-assertion} of Proposition \ref{key-prop}]
Assertion \ref{truncassertion} of Proposition \ref{essential-prop} implies that it suffices to prove the following special case:
\begin{align} \label{special3}
\parbox{.82\textwidth}{Assume $g_k$ is a polynomial for each $k$, $0 \leq k \leq n$, but $g_{n+1}$ is {\em not} a polynomial. Then there does not exist any polynomial $f \in \cc[x,y]$ such that {\em every} branch of $V(f) \subseteq X$ at infinity has a parametrization of the form \eqref{dpuiseux-param}.}
\end{align}
We prove \eqref{special3} by contradiction. So assume the assumptions of \eqref{special3} hold, but there exists a polynomial $f \in \cc[x,y]$ such that each of its branches at infinity has a parametrization of the form 
\begin{align*}
t \mapsto (t, \tilde \phi_\delta(t,\xi)|_{\xi = c} + \lot)\quad \text{for}\ |t| \gg 0 
\end{align*}
for some $c \in \cc$ depending on the branch. Then it follows that $f$ has a factorization of the form
\begin{align}
f &= a\prod_{k=1}^m f_{\psi_k},\quad \text{where $a \in \cc^*$ and} \label{f-decomposition} \\
\psi_k &\equiv_{r_{l+1}} \phi_{l+1},\quad \text{for each}\ k,\ 1 \leq k \leq m, \label{psi-n-equivalence}
\end{align}
see Remark-Notations \ref{f-phi} and \ref{equivalent-remark} to recall the notations. W.l.o.g.\ we may (and will) assume that $a = 1$. \\

Recall that $f_{l+1} = g_{n+1}$, where $f_{l+1}$ is the last essential key form. At first we claim that $l \geq 1$. Indeed, otherwise $f_1 = g_{n+1}$ is not a polynomial, and therefore the construction of $f_1$ from Algorithm \ref{essential-algorithm} shows that $\tilde \phi_\delta$ has the following form: 
\begin{align*}
\tilde \phi_\delta(x,\xi) = h(x) + bx^{-s} + \lot
\end{align*}
where $h(x) \in \cc[x]$, $b \in \cc^*$, $s$ is a positive integer, and $\lot$ denotes terms in which the exponents of $x$ are smaller than $-s$. But then $\delta(y - h(x)) < 0$, which is impossible (since every polynomial has a pole at infinity on $\bar X$). Therefore $l \geq 1$, as required.\\

Since $f_j = \pi(F_j)$, it follows by our assumptions that $F_j$ is a polynomial for all $j$, $1 \leq j \leq l$, but $F_{l+1}$ is {\em not} a polynomial. Fix $k$, $1 \leq k \leq m$. Then identity \eqref{psi-n-equivalence} and Proposition \ref{limit-of-agreement} imply that 
\begin{align}
F_{\psi_k}^{(l)} = F_{l+1} + \tilde F_k
\end{align}
where $\tilde F_k \in \tilde B_l := \dpsxc[y_0, \ldots, y_l]$ and $\omega(\tilde F_k) \leq \omega_{l+1}$. Let $s_k$ denote the polydromy order of $\psi_k$ and $\mu_k$ be a primitive $s_k$-th root of unity. Identity \eqref{psi-n-equivalence} implies that $t_k := s_k/(p_1p_2 \cdots p_{l})$ is a positive integer. Therefore identity \eqref{f-phi-n-m} and Assertion \ref{star-projection} of Lemma \ref{star-lemma} imply that 
\begin{align}
f_{\psi_k}  
	&= \prod_{j=0}^{t_k-1} \mu_k^{jp_1 \cdots p_l} \star_{s_k} \left(f_{\psi_k}^{(l)}\right)
	= \prod_{j=0}^{t_k-1} \mu_k^{jp_1 \cdots p_l} \star_{s_k} \left(\pi_l \left(F_{l+1} + \tilde F_k\right) \right) 
	= \pi_l(G_k)\quad \text{where} \label{f-psi-n} \\
G_k &:= \prod_{j=0}^{t_k-1} \left(F_{l+1} + \mu_k^{jp_1 \cdots p_l} \star_{s_k}\left(\tilde F_k \right) \right) \in \tilde B_l. \label{G-n-defn}
\end{align}
Recall that $F_{l+1} = y_{l}^{p_{l}} - c_{l,0}x^{\beta^0_{l,0}} y_1^{\beta^0_{l,1}} \cdots y_{l-1}^{\beta^0_{l,l-1}} - \sum_{i=1}^{l_n}c_{l,i} x^{\beta^i_{l,0}} y_1^{\beta^i_{l,1}} \cdots y_{l}^{\beta^i_{l,l}}$. By assumption there exists $i'$ such that $\beta^{i'}_{l,0} < 0$; choose smallest such $i'$ and set 
$$\omega_{l+1,i'} := \omega\left(x^{\beta^{i'}_{l,0}} y_1^{\beta^{i'}_{l,1}} \cdots y_{l}^{\beta{i'}_{l,l}}\right) = \sum_{i=0}^l \beta^{i'}_{l,i}\omega_i.$$
Then $\omega_{l+1,i'} > \omega_{l+1}$ and therefore we may express $G_k$ as
\begin{align}
G_k	&= \prod_{j=0}^{t_k - 1} \left(y_{l}^{p_{l}} - c_{l,0}x^{\beta^0_{l,0}} y_1^{\beta^0_{l,1}} \cdots y_{l-1}^{\beta^0_{l,l-1}} - \sum_{i=1}^{i'}c_{l,i} x^{\beta^i_{l,0}} y_1^{\beta^i_{l,1}} \cdots y_{l}^{\beta^i_{l,l}} - G_{n,j} \right) , \label{G-n}
\end{align} 
for some $G_{n,j} \in \tilde B_l$ with $\omega(G_{n, j}) < \omega_{l+1,i'}$. Identities \eqref{f-decomposition}, \eqref{f-psi-n} and \eqref{G-n} imply that $f = \pi_l(F)$ for some $F \in \tilde B_l$ of the form
\begin{align}
F = \prod_{N=1}^M \left(y_{l}^{p_{l}} - c_{l,0}x^{\beta^0_{l,0}} y_1^{\beta^0_{l,1}} \cdots y_{l-1}^{\beta^0_{l,l-1}} - \sum_{i=1}^{i'}c_{l,i} x^{\beta^i_{l,0}} y_1^{\beta^i_{l,1}} \cdots y_{l}^{\beta^i_{l,l}} - \tilde G_{N} \right),
\end{align}
where $\omega(\tilde G_{N}) < \omega_{l+1,i'}$ for all $N$, $1 \leq N \leq M$. Let 
\begin{align*}
G := \begin{cases}
		F - y_l^{Mp_{l}}	
			& \text{if}\ i'=0,\\
		F - \left(y_l^{p_{l}} - c_{l,0}x^{\beta^0_{l,0}} y_1^{\beta^0_{l,1}} \cdots y_{l-1}^{\beta^0_{l,l-1}} - \sum_{i=1}^{i'-1}c_{l,i} x^{\beta^i_{l,0}} y_1^{\beta^i_{l,1}} \cdots y_{l}^{\beta^i_{l,l}} \right)^M 
			& \text{otherwise.}
		\end{cases}
\end{align*}
Then the {\em leading weighted homogeneous form} of $ G$ with respect to $\omega$ is
\begin{align} \label{ld-omega}
\ld_{\omega}(G) = \begin{cases}
			- c_{l,0}x^{\beta^0_{l,0}} y_1^{\beta^0_{l,1}} \cdots y_{l-1}^{\beta^0_{l,l-1}} & \text{if}\ i' = 0\ \text{and}\ M = 1, \\
			\sum_{i=1}^M {M \choose i} (-c_{l,0})^i y_l^{(M-i)p_{l}} x^{i\beta^0_{l,0}} y_1^{i\beta^0_{l,1}} \cdots y_{l-1}^{i\beta^0_{l,l-1}} & \text{if}\ i' = 0, \text{and}\ M > 1,\\
			Mc_{l,i'}\left(y_l^{p_{l}} - c_{l,0}x^{\beta^0_{l,0}} y_1^{\beta^0_{l,1}} \cdots y_{l-1}^{\beta^0_{l,l-1}}\right)^{M-1} x^{\beta^{i'}_{l,0}} y_1^{\beta^{i'}_{l,1}} \cdots y_{l}^{\beta^{i'}_{l,l}} & \text{otherwise.}
			\end{cases}
\end{align}
Since $\pi_l(F) = f \in \cc[x,y]$, it follows that $g:= \pi_l(G)$ is also a polynomial in $x$ and $y$. Assertion \ref{wp-assertion} of Proposition \ref{poly-positive-prop} then implies that there is a polynomial $\tilde G \in A_l := \cc[x,y_1, \ldots, y_l]$ such that $\pi_l(\tilde G) = g$ and $\omega(\tilde G) = \delta_l(g)$. In particular, $\omega(\tilde G) \leq \omega(G)$.

\begin{claim} \label{omega-and-tilde}
$\omega(\tilde G) = \omega(G)$.
\end{claim}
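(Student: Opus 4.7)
The plan is to argue by contradiction. Suppose $\omega(\tilde G) < \omega(G) =: d$ and decompose $G = \ld_\omega(G) + G_<$ with $\omega(G_<) < d$. Applying $\pi_l$ gives $\pi_l(\ld_\omega(G)) = g - \pi_l(G_<)$; since $\delta_l(g) = \omega(\tilde G) < d$ by hypothesis while $\delta_l(\pi_l(G_<)) \leq \omega(G_<) < d$ by the minimum property of $\delta_l$ (whose essential key forms are $f_0, \ldots, f_l$ by Proposition \ref{essential-prop}, so that Corollary \ref{essential-corollary} applies), the ultrametric inequality for the semidegree $\delta_l$ forces $\delta_l(\pi_l(\ld_\omega(G))) < d$. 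Call this bound $(*)$.

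I will then derive a contradiction to $(*)$ by computing $\delta_l(\pi_l(\ld_\omega(G)))$ directly from \eqref{ld-omega} and showing it equals $d$ in each case. In Case 1, $\ld_\omega(G)$ is a single monomial in $x, y_1, \ldots, y_{l-1}$ (with possibly negative $x$-exponent), so multiplicativity of the valuation $\delta_l$ applied to $\pi_l(\ld_\omega(G)) = -c_{l,0} x^{\beta^0_{l,0}} f_1^{\beta^0_{l,1}} \cdots f_{l-1}^{\beta^0_{l,l-1}}$ gives the required equality immediately. In Cases 2 and 3, where $\ld_\omega(G)$ involves $H_{l+1}^M$ (respectively $H_{l+1}^{M-1}$ times a monomial), I first need the subclaim $\delta_l(\pi_l(H_{l+1})) = p_l\omega_l$, i.e.\ that the two top-$\delta_l$-value summands $f_l^{p_l}$ and $c_{l,0} x^{\beta^0_{l,0}} f_1^{\beta^0_{l,1}} \cdots f_{l-1}^{\beta^0_{l,l-1}}$ of $\pi_l(H_{l+1})$ do not cancel in $\gr_{\delta_l}$. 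This follows from Assertion \ref{grobner-assertion} of Lemma \ref{poly-positive-prop}: the Gr\"obner basis $\scrB_l = (H_l, \ldots, H_2)$ of $J_l$ has $\prec_l$-leading monomials $y_j^{p_j}$ for $1 \leq j \leq l-1$, none of which involves $y_l$, whereas the $\prec_l$-leading monomial of $H_{l+1}$ is $y_l^{p_l}$; hence a suitable positive $x$-multiple of $H_{l+1}$ lies in normal form with respect to $\scrB_l$ and is nonzero in $A_l/J_l$, so $H_{l+1}$ itself survives nonzero in the localized graded ring $(A_l/J_l)[x^{-1}]$. With the subclaim established, expanding $\pi_l(\ld_\omega(G))$ by the binomial theorem in Case 2 (whose resulting summands have pairwise distinct $y_l$-exponents, so their classes in $\gr_{\delta_l}$ cannot cancel) and applying multiplicativity of $\delta_l$ in Case 3 both yield $\delta_l(\pi_l(\ld_\omega(G))) = d$, contradicting $(*)$.

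The main obstacle is precisely the cancellation analysis in Cases 2 and 3: one cannot simply invoke multiplicativity of $\delta_l$ because the two top summands of $\pi_l(H_{l+1})$ share the same $\delta_l$-value, and the resolution must pass through the Gr\"obner basis structure of Lemma \ref{poly-positive-prop}. The need to localize at $x$ in that argument (to accommodate $\beta^0_{l,0} < 0$) is the technical manifestation of the negative $x$-exponent phenomenon that the introduction highlights as the crucial new ingredient of the effective criterion.
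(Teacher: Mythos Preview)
Your argument is correct and reaches the same contradiction as the paper, but by a somewhat different route. Both proofs start from the contradiction hypothesis $\omega(\tilde G)<\omega(G)$ and both ultimately rest on Assertion \ref{grobner-assertion} of Lemma \ref{poly-positive-prop}. The difference is in how the contradiction is extracted.

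The paper observes directly that the hypothesis is equivalent to $x^{\alpha}\ld_{\omega}(G)\in J_l$ (for the minimal $\alpha$ clearing the $x$-denominators), computes the single $\prec_l$-leading monomial of $x^{\alpha}\ld_{\omega}(G)$ from \eqref{ld-omega} in each of the three cases, and notes that in every case the $y_j$-exponent of that monomial is $\beta^{0}_{l,j}$ or $\beta^{i'}_{l,j}$, both of which are $<p_j$ for $1\le j\le l-1$ by Property~\ref{exponent-bound}. Hence no $\lt_{\prec_l}(H_{j+1})=y_j^{p_j}$ can divide it, contradicting the Gr\"obner-basis property of $\scrB_l$. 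This is a single divisibility check, uniform across the three cases once the leading monomials are listed.

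You instead compute $\delta_l(\pi_l(\ld_\omega(G)))$ explicitly in each case. Case~1 is immediate. Case~3 follows cleanly from your subclaim $\delta_l(\pi_l(H_{l+1}))=p_l\omega_l$ together with multiplicativity, and your proof of the subclaim (leading monomial $y_l^{p_l}$, not divisible by any $y_j^{p_j}$ with $j\le l-1$) is exactly the paper's mechanism in miniature. Case~2 is the one place where your wording is a little loose: saying the summands ``have pairwise distinct $y_l$-exponents, so their classes in $\gr_{\delta_l}$ cannot cancel'' is really the statement that $A_l/J_l\cong \cc[S_{l-1}][y_l]$ is a polynomial ring in $y_l$ (which is Lemma \ref{semi-isomorphism} combined with Corollary \ref{J-korollary}), so that elements of distinct $y_l$-degree are independent after passing to the $x$-localisation. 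With that identification made explicit, your Case~2 is fine. So your approach trades one uniform leading-term check for three separate cancellation analyses, all resting on the same structural lemma; the paper's version is more economical, while yours makes the role of $H_{l+1}$ and the $y_l$-grading of $A_l/J_l$ more visible.
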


\begin{proof}
Let $\prec_l$ be the monomial ordering on $A_l$ from Section \ref{ordered-section} and let $\alpha$ be the smallest positive integer such that $x^\alpha \ld_{\omega}(G) \in A_l$, then \eqref{ld-omega} implies that the {\em leading term} of $x^\alpha \ld_{\omega}(G)$ with respect to $\prec_l$ is

\begin{align} \label{lt-G}
\lt_{\prec_l}\left(x^\alpha\ld_{\omega}\left(G \right)\right) 
			= \begin{cases}
			- c_{l,0} y_1^{\beta^0_{l,1}} \cdots y_{l-1}^{\beta^0_{l,l-1}} 
				& \text{if}\ i' = 0\ \text{and}\ M = 1, \\
			- c_{l,0} M y_l^{(M-1)p_{l}} x^{(M-1)|\beta^0_{l,0}|} y_1^{\beta^0_{l,1}} \cdots y_{l-1}^{\beta^0_{l,l-1}} 
				& \text{if}\ i' = 0, \text{and}\ M > 1,\\
			Mc_{l,i'}y_l^{(M-1)p_{l}+ \beta^{i'}_{l,l}} y_1^{\beta^{i'}_{l,1}} \cdots y_{l-1}^{\beta^{i'}_{l,l-1}} & \text{otherwise.}
			  \end{cases}
\end{align}
Assume contrary to the claim that $\omega(G) > \omega(\tilde G) = \delta_l(g)$. Then $x^\alpha \ld_\omega(G) \in J_l$, where $J_l$ is the ideal from Assertion \ref{grobner-assertion} of Proposition \ref{poly-positive-prop}. Assertion \ref{grobner-assertion} of Proposition \ref{poly-positive-prop} then implies that there exists $j$, $1 \leq j \leq l-1$, such that $y_{j}^{p_{j}} = \lt_{\prec_l}(H_{j+1})$ divides $\lt_{\prec_l}(x^\alpha\ld_{\omega}(G))$. But this contradicts the fact that $\beta^{i'}_{l,j'} < p_{j'}$ for all $j'$, $1 \leq j' \leq l-1$ (Property \ref{exponent-bound} of essential key forms) and completes the proof of the claim.
\end{proof}

Let $J_l$ and $\alpha$ be as in the proof of Claim \ref{omega-and-tilde}. Note that $\ld_{\omega}(x^\alpha \tilde G) \not\in J_l$ by our choice of $\tilde G$. Therefore, after `dividing out' $\tilde G$ by the Gr\"obner basis $\scrB_l$ of Proposition \ref{poly-positive-prop} (which does not change $\omega(\tilde G)$) if necessary, we may (and will) assume that 
\begin{align}
\parbox{.9\textwidth}{$y_{j}^{p_{j}}$ does {\em not} divide any of the monomial terms of $\ld_\omega(x^\alpha \tilde G)$ for any $j$, $1 \leq j \leq l-1$.} \label{tilde-G-division}
\end{align}
Since $\pi_l(x^\alpha G - x^\alpha \tilde G) = 0$, it follows that $\ld_\omega(x^\alpha G - x^\alpha \tilde G) \in J_l$. Since $\omega(G) = \omega(\tilde G)$ by Claim \ref{omega-and-tilde}, it follows that $H^* := \ld_\omega(x^\alpha G) - \ld_\omega(x^\alpha \tilde G) \in J_l$. Let 
\begin{align*}
H := \lt_{\prec_l}\left(\ld_\omega(x^\alpha G)\right)\ \text{and}\ \tilde H := \lt_{\prec_l}\left(\ld_\omega(x^\alpha \tilde G)\right).
\end{align*}
Since $\tilde G \in \cc[x,y_1, \ldots, y_l]$, it follows that $\deg_x(\tilde H) \geq \alpha$. On the other hand, our construction of $G$ shows that $\deg_x(H) < \alpha$. It follows in particular that $H \neq \tilde H$ and 
\begin{align*}
\lt_{\prec_l}(H^*) = \max_{\prec_l}\{H, -\tilde H\}.
\end{align*}
Then \eqref{lt-G} and \eqref{tilde-G-division} imply that $y_{j}^{p_{j}} = \lt_{\prec_l}(H_{j})$ does {\em not} divide $\lt_{\prec_l}(H^*)$ for any $j$, $1 \leq j \leq l-1$. This contradicts Assertion \ref{grobner-assertion} of Proposition \ref{poly-positive-prop} and finishes the proof of Assertion \ref{hard-assertion} of Proposition \ref{key-prop}.
\end{proof}

\section{Proofs of the technical lemmas} \label{sec-technical-proofs}

\subsection{Proof of Lemma \ref{limit-of-agreement}}
Throughout this section we use the notations of Section \ref{essential-section}.

\begin{lemma} \label{essential-omega-lemma}
Fix $k$, $1 \leq k \leq l$ and let $H$ be a non-zero element in $\tilde B_k := \dpsxc[y_1, \ldots,y_k] \subseteq B_k$ be such that $\deg_{y_j}(H) < p_j$ for all $j$, $1 \leq j \leq k$. Then $\delta(\pi(H)) = \omega(H)$.
\end{lemma}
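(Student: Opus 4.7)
The plan is to prove the two inequalities separately. The easy direction $\delta(\pi(H)) \leq \omega(H)$ is immediate from Property \ref{ess-generating-property} of essential key forms, since $H$ itself is a lift of $\pi(H)$ (after extending $\omega$ naturally from $B_k$ to $\tilde B_k$ by declaring $\omega(x^r) = r\omega_0$ for $r \in \qq$).

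For the reverse inequality, I would expand $H = \sum_\alpha h_\alpha(x)\, y_1^{\alpha_1} \cdots y_k^{\alpha_k}$ with $h_\alpha \in \cc[x,x^{-1}]$ and $0 \leq \alpha_j < p_j$ (which is the content-bearing case of the lemma, as fractional $x$-exponents in the $h_\alpha$ would create hidden cancellations). Setting $d_\alpha := \omega_0 \deg_x(h_\alpha) + \sum_j \alpha_j \omega_j$, put $d := \omega(H) = \max_\alpha d_\alpha$ and $S := \{\alpha : h_\alpha \neq 0,\ d_\alpha = d\}$. The first critical step is to prove that $|S|=1$. If $\alpha, \alpha' \in S$, then writing $m := \deg_x(h_\alpha) - \deg_x(h_{\alpha'}) \in \zz$, the equality $d_\alpha = d_{\alpha'}$ becomes
\begin{equation*}
m\,\omega_0 + \sum_{j=1}^k \alpha_j \omega_j \;=\; \sum_{j=1}^k \alpha'_j \omega_j,
\end{equation*}
an equality of elements of $p_{k+1}\cdots p_{l+1}\,\zz$ (by Assertion \ref{gcd-omega} of Proposition \ref{essential-prop}). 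The uniqueness of Assertion \ref{unique-omega-combination} then forces $m = 0$ and $\alpha_j = \alpha'_j$ for all $j$; write $\alpha^*$ for the unique element of $S$.

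The second critical step is the substitution $y = \tilde\phi_\delta(x,\xi)$ in $\pi(H)$. For each $j$ with $1 \leq j \leq k \leq l$, the series $\tilde\phi_\delta$ coincides with the specialization of $\tilde\phi_j$ at $\xi_j = a_j$ up to terms of strictly smaller $x$-degree, so combining this with Assertion \ref{f-k-substitution} of Proposition \ref{essential-prop} gives
\begin{equation*}
f_j(x,\tilde\phi_\delta(x,\xi)) \;=\; c_j\, x^{\omega_j/\omega_0} + (\text{lower powers of } x), \qquad c_j \in \cc^*.
\end{equation*}
Substituting, each summand $h_\alpha(x)\prod_j (c_j x^{\omega_j/\omega_0} + \text{l.o.t.})^{\alpha_j}$ has $x$-degree exactly $d_\alpha/\omega_0$. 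Because $S = \{\alpha^*\}$, no cancellation is possible at the top degree, and the coefficient of $x^{d/\omega_0}$ is the non-zero constant $(\text{leading coefficient of } h_{\alpha^*}) \cdot \prod_j c_j^{\alpha_j^*}$. Therefore $\deg_x\bigl(\pi(H)(x,\tilde\phi_\delta(x,\xi))\bigr) = d/\omega_0$, and from the formula $\delta(\pi(H)) = \omega_0\cdot \deg_x(\pi(H)(x,\tilde\phi_\delta))$ we conclude $\delta(\pi(H)) = d = \omega(H)$.

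The main obstacle is the uniqueness step: one must check that the right-hand side $\sum_j \alpha'_j \omega_j$ is indeed a multiple of $p_{k+1}\cdots p_{l+1}$ before applying Assertion \ref{unique-omega-combination}, which is precisely what Assertion \ref{gcd-omega} provides. The rest is a routine calculation with the generic degree-wise Puiseux series of $\delta$.
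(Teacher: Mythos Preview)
Your proof is correct and hinges on the same key observation as the paper's: Assertion~\ref{unique-omega-combination} of Proposition~\ref{essential-prop} (together with Assertion~\ref{gcd-omega}) forces the top $\omega$-value among the monomials of $H$ to be achieved uniquely. The only difference is in the final deduction: the paper notes that $\delta(\pi(\text{monomial})) = \omega(\text{monomial})$ for each monomial (since $\delta$ is a semidegree with $\delta(f_j)=\omega_j$) and then invokes the strict ultrametric property of degree-like functions directly, whereas you recompute this by substituting $y=\tilde\phi_\delta$ and tracking the leading $x$-term --- a more explicit but equivalent route, which in particular makes your separate ``easy direction'' via Property~\ref{ess-generating-property} redundant.
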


\begin{proof}
Assertion \ref{unique-omega-combination} of Proposition \ref{essential-prop} implies that no two distinct monomials in $H$ have the same $\omega$-value, i.e.\ $H = \sum_{j \geq 1} H_j$ with $\omega(H) = \omega(H_1) > \omega(H_2) > \cdots$. Since $\delta(\pi(y_j)) = \delta(f_j) = \omega_j = \omega(y_j)$, it follows that $\delta(\pi(H_j)) = \omega(H_j)$ for all $j \geq 1$. It follows then (from the definition of degree-like functions) that $\delta(\pi(H)) = \omega(H_1) = \omega(H)$.
\end{proof}

\begin{proof}[Proof of Assertion \ref{F-k+1-agreement} of Lemma \ref{limit-of-agreement}]
The first statement of Assertion \ref{F-k+1-agreement} of Lemma \ref{limit-of-agreement} follows immediately from the definitions. We prove the second statement here. Fix $k$, $1 \leq k \leq l$. Let $\tilde H$ be the sum of all monomial terms $H$ (in $x,y_1, \ldots, y_k$) of $F_{\phi_{k+1}}$ such that $\omega(H) > \omega_{k+1}$, i.e.\ $F_{\phi_{k+1}} = \tilde H + \tilde G$ for some $\tilde G \in \tilde B_k$ with $\omega(\tilde G) \leq \omega_{k+1}$. Recall from Assertion \ref{f-phi-k} of Proposition \ref{essential-prop} that $f_{\phi_{k+1}} = \pi(F_{k+1} + \tilde F_{k+1})$ for some $\tilde F_{k+1} \in B_{k}$ such that $\omega(\tilde F_{k+1}) \leq \omega_{k+1}$. Since $f_{\phi_{k+1}} = \pi(F_{\phi_{k+1}})$, it follows that 
\begin{align*}
\delta \left(\pi(\tilde H - F_{k+1})\right) = \delta\left(\pi(\tilde F_{k+1} - \tilde G) \right) \leq \omega \left (\tilde F_{k+1} - \tilde G\right) \leq \omega_{k+1}.
\end{align*}
Now identity \eqref{F-k} and Lemma \ref{F-Phi} imply that $H:=\tilde H - F_{k+1}$ satisfies the assumption of Lemma \ref{essential-omega-lemma}, which implies that $\omega\left(\tilde H - F_{k+1}\right) \leq \omega_{k+1}$. Since both $\tilde H$ and $F_{k+1}$ consist of monomials with $\omega$-value strictly greater than $\omega_{k+1}$, it follows that $\tilde H = F_{k+1}$, which completes the proof of Assertion \ref{F-k+1-agreement} of Lemma \ref{limit-of-agreement}.
\end{proof}

\begin{notation}
For $F \in \tilde B_k$ and $I \subseteq \rr$, we write $F|_I$ for the sum of all monomial terms $H$ of $F$ such that $\omega(H) \in I$. Moreover, for $w \in \rr$, we write $F|_{> w}$ for $F|_{(w, \infty)}$.
\end{notation}

\begin{lemma} \label{F-phi-psi-inductive-agreement}
Fix $k$, $1 \leq k \leq l$. Let $\psi_1, \psi_2 \in \dpsxc$ and $w \leq \omega_k \in \rr$ be such that
\begin{compactenum}
\item the first $k$ Puiseux pairs of each $\psi_j$ are $(q_1, p_1), \ldots, (q_k,p_k)$.
\item \label{w-assumption} $F_{\psi_1}^{(k-1)}|_{> w}  = F_{\psi_2}^{(k-1)}|_{> w}$ (where $F_{\psi_j}^{(k-1)}$ are defined as in Remark-Notation \ref{equivalent-remark}), and
\item \label{omega-k-assumption} for each $j$, $1 \leq j \leq 2$,
$$\omega\left(F_{\psi_j}^{(k-1)}|_{> \omega_k} - F_k \right) \leq \omega_k.$$  
\end{compactenum}
Then $F_{\psi_1}^{(k)}|_{>(p_{k} -1)\omega_{k} + w} = F_{\psi_2}^{(k)}|_{>(p_{k} -1)\omega_{k} + w}$.
\end{lemma}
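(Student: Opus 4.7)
The strategy is to construct explicit preimages $\tilde Q_j \in \tilde B_k$ of $f_{\psi_j}^{(k)}$ under $\pi_k$, bound the $\omega$-weight of their difference directly, and then deduce the bound on $\omega\bigl(F_{\psi_1}^{(k)} - F_{\psi_2}^{(k)}\bigr)$ via Lemma \ref{essential-omega-lemma}.

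First I would reformulate assumption (iii). Inspection of the definition \eqref{F-k} together with Property \ref{ess-next-property}(e) shows that every monomial of $F_k$ has $\omega$-weight strictly greater than $\omega_k$; and by construction, every monomial of $F_{\psi_j}^{(k-1)}|_{>\omega_k}$ has $\omega$-weight $>\omega_k$ as well. Hence $F_{\psi_j}^{(k-1)}|_{>\omega_k} - F_k$ is a sum of monomials of weight $> \omega_k$; for such a sum to have $\omega$-weight $\leq \omega_k$ it must vanish. Thus $F_{\psi_j}^{(k-1)}|_{>\omega_k} = F_k$, and setting $R_j := F_{\psi_j}^{(k-1)} - F_k = F_{\psi_j}^{(k-1)}|_{\leq \omega_k} \in \tilde B_{k-1}$, we have $\omega(R_j) \leq \omega_k$ and (by assumption (ii)) $R_1|_{>w} = R_2|_{>w}$, so $\omega(R_1 - R_2) \leq w$.

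Next I would construct the lifts. Choose $s$ to be a common multiple of the polydromy orders of $\psi_1$ and $\psi_2$, and let $\zeta$ be a primitive $s$-th root of unity. Set
\begin{align*}
\tilde Q_j := \prod_{i=0}^{p_k-1}\bigl(y_k + \zeta^{i p_0 p_1 \cdots p_{k-1}} \star_s R_j\bigr) \in \tilde B_k.
\end{align*}
Since $f_k \in \cc[x,x^{-1},y]$ has polydromy one, $c \star_s f_k = f_k$ for every $s$-th root of unity $c$; combining this with Lemma \ref{star-lemma}\mathref{star-projection}, identity \eqref{f-phi-n-m} (with $m=k-1$, $n=k$), and the rewriting of $\star_{s_j}$ via $\star_s$ afforded by Lemma \ref{star-lemma}\mathref{different-stars}, one checks $\pi_k(\tilde Q_j) = f_{\psi_j}^{(k)}$. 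Writing $S_{n,i} := \zeta^{ip_0 \cdots p_{k-1}} \star_s R_n$, the telescoping identity
\begin{align*}
\tilde Q_1 - \tilde Q_2 = \sum_{i=0}^{p_k - 1}\Bigl[\prod_{i' < i}(y_k + S_{1,i'})\Bigr]\,(S_{1,i} - S_{2,i})\,\Bigl[\prod_{i' > i}(y_k + S_{2,i'})\Bigr]
\end{align*}
combined with $\omega(y_k + S_{n,i'}) \leq \omega_k$ and $S_{1,i} - S_{2,i} = \zeta^{ip_0 \cdots p_{k-1}} \star_s (R_1 - R_2)$ (so $\omega(S_{1,i} - S_{2,i}) = \omega(R_1 - R_2) \leq w$, since $\star$ preserves $\omega$) yields $\omega(\tilde Q_1 - \tilde Q_2) \leq (p_k - 1)\omega_k + w$.

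To finish, apply Lemma \ref{essential-omega-lemma} to $D := F_{\psi_1}^{(k)} - F_{\psi_2}^{(k)} \in \tilde B_k$. By Lemma \ref{F-Phi} (Assertions \ref{F-Phi-monic} and the degree bounds), each $F_{\psi_j}^{(k)}$ is monic in $y_k$ of degree $p_k$ with $\deg_{y_j} < p_j$ for $1 \leq j \leq k-1$, so $\deg_{y_j}(D) < p_j$ for all $1 \leq j \leq k$. Therefore $\omega(D) = \delta(\pi_k(D))$. Since $\tilde Q_1 - \tilde Q_2$ is another preimage of $\pi_k(D) = f_{\psi_1}^{(k)} - f_{\psi_2}^{(k)}$ under $\pi_k$, the minimality characterization of $\delta$ (Property \ref{ess-generating-property} of essential key forms) gives $\delta(\pi_k(D)) \leq \omega(\tilde Q_1 - \tilde Q_2) \leq (p_k - 1)\omega_k + w$, which is exactly the claim. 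The main obstacle is the lift step: $\psi_1$ and $\psi_2$ a priori have different polydromy orders, so their conjugation actions are different; the trick is to pass to a common $s$-th root of unity via Lemma \ref{star-lemma}\mathref{different-stars} so that $S_{1,i} - S_{2,i}$ becomes a single $\star$-conjugate of $R_1 - R_2$, to which the hypothesis $\omega(R_1 - R_2) \leq w$ applies.
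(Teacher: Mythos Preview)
Your argument is correct and is genuinely different from the paper's. Both proofs begin the same way: decompose $F_{\psi_j}^{(k-1)} = F_k + R_j$ with $\omega(R_j)\leq\omega_k$, then build a product lift $\tilde Q_j=\prod_{i=0}^{p_k-1}(y_k+\zeta^{ip_1\cdots p_{k-1}}\star_s R_j)$ in $\tilde B_k$ with $\pi_k(\tilde Q_j)=f_{\psi_j}^{(k)}$. The divergence is in how one passes from $\tilde Q_j$ to $F_{\psi_j}^{(k)}$. The paper proves a separate combinatorial claim (Claim~6.2.1): an explicit terminating rewriting procedure, replacing $y_i^{p_i}$ by $y_{i+1}-(F_{i+1}-y_i^{p_i})$, converts $\tilde Q_j$ into $F_{\psi_j}^{(k)}$ without increasing any monomial's $\omega$-weight, so the part of $F_{\psi_j}^{(k)}$ above $w^*=(p_k-1)\omega_k+w$ is determined solely by the ``common'' factors and not by the $j$-dependent tails. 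You instead bound $\omega(\tilde Q_1-\tilde Q_2)\leq w^*$ by a telescoping product and then invoke Lemma~\ref{essential-omega-lemma} on $D=F_{\psi_1}^{(k)}-F_{\psi_2}^{(k)}$ (which satisfies $\deg_{y_j}D<p_j$ for every $j$) to get $\omega(D)=\delta(\pi_k D)\leq\omega(\tilde Q_1-\tilde Q_2)$. This bypasses the rewriting algorithm entirely and is noticeably shorter; the paper's route, on the other hand, is more constructive and makes the monomial-level provenance of $F_{\psi_j}^{(k)}$ explicit.

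Two small points of care. First, your sentence ``every monomial of $F_k$ has $\omega$-weight strictly greater than $\omega_k$'' fails for $k=1$, where $F_1=y_1$ has weight exactly $\omega_1$; but you do not actually need that exact equality $F_{\psi_j}^{(k-1)}|_{>\omega_k}=F_k$ --- writing $R_j=(F_{\psi_j}^{(k-1)}|_{>\omega_k}-F_k)+F_{\psi_j}^{(k-1)}|_{\leq\omega_k}$ and using assumption~(iii) already gives $\omega(R_j)\leq\omega_k$, which is all the rest of the proof uses. Second, your appeal to Property~\ref{ess-generating-property} for $\delta(\pi_k D)\leq\omega(\tilde Q_1-\tilde Q_2)$ is slightly off, since that property is stated for preimages in $B$, not in $\tilde B_k$; but the needed inequality $\delta(\pi_k F)\leq\omega(F)$ for $F\in\tilde B_k$ is the trivial half (each monomial has $\delta\circ\pi_k=\omega$, and $\delta$ is subadditive on sums), so nothing is lost.
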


\begin{proof}
Let 
\begin{align*}
\tilde B:= \begin{cases}
			\tilde B_1 & \text{if}\ k = 1, \\
			\tilde B_{k-1} & \text{otherwise.}
		   \end{cases}
\end{align*}
Assumptions \ref{w-assumption} and \ref{omega-k-assumption} imply that there exists $G \in \tilde B$ with $\omega(G) \leq \omega_k$ such that for both $j$, $1 \leq j \leq 2$,
$$F_{\psi_j}^{(k-1)} = F_k + G + G_j$$
for some $G_j \in \tilde B$ with $\omega(G_j) \leq w$. Fix $j$, $1 \leq j \leq 2$. Let $m_j$ be the polydromy order of $\psi_j$ and $\mu_j$ be a primitive $m_j$-th root of unity. Then identity \eqref{f-phi-n-m} and Assertion \ref{star-projection} of Lemma \ref{star-lemma} imply that 
\begin{align*}
f_{\psi_j}^{(k)} &= \prod_{i=0}^{p_k-1} \mu_j^{ip_1 \cdots p_{k-1}} \star_{m_j} \left(f_{\psi_j}^{(k-1)}\right) = \pi_{k-1}(G^*_j)\quad\ \text{where}\\
G^*_j 	&:= \prod_{i=0}^{p_k-1} \mu_j^{ip_1 \cdots p_{k-1}} \star_{m_j} \left(F_{\psi_j}^{(k-1)}\right) 
		= \prod_{i=0}^{p_k-1} \mu_j^{ip_1 \cdots p_{k-1}} \star_{m_j} \left(F_k + G + G_j \right) \\
		&= \prod_{i=0}^{p_k-1} \left(F_k + \mu_j^{ip_1 \cdots p_{k-1}} \star_{m_j}G + \mu_j^{ip_1 \cdots p_{k-1}} \star_{m_j}G_j \right)
		= \prod_{i=0}^{p_k-1} \left(F_k + \mu^{ip_1 \cdots p_{k-1}} \star_{m}G + \mu_j^{ip_1 \cdots p_{k-1}} \star_{m_j}G_j \right)
\end{align*}
where $m$ is the {\em polydromy order} of $G$ (Definition \ref{star-defn}) and $\mu$ is a primitive $m$-th root of unity (the last equality is an implication of Assertion \ref{different-stars} of Lemma \ref{star-lemma}). Let 
\begin{align} \label{zeroth-step}
G_{j,0} &= \prod_{i=0}^{p_k-1} \left(y_k + \mu^{ip_1 \cdots p_{k-1}} \star_{m}G + \mu_j^{ip_1 \cdots p_{k-1}} \star_{m_j}G_j \right) \in \tilde B_k.
\end{align}
Note that $\pi_k(G_{j,0}) = f_{\psi_j}^{(k)} = \pi_k(F_{\psi_j}^{(k)})$. 
\begin{proclaim} \label{constructing-F-k}
$F_{\psi_j}^{(k)}$ can be constructed from $G_{j,0}$ in a finite number of steps, where each step consists of replacing (from a suitable collection of monomials) $y_i^{p_i}$ with $y_{i+1} - (F_{i+1} - y_i^{p_i})$ for a suitable $i$, $1 \leq i \leq k-1$.
\end{proclaim}
At first we show how Claim \ref{constructing-F-k} implies the lemma. Indeed, every monomial term in $y_{i+1} - (F_{i+1} - y_i^{p_i})$ has $\omega$-value smaller than or equal to $\omega(y_i^{p_i})$. Therefore Claim \ref{constructing-F-k} implies that all the monomials in $F_{\psi_j}^{(k)}|_{> w^*}$, where $w^* := (p_{k} -1)\omega_{k} + w$, originates (via repeated substitutions of Claim \ref{constructing-F-k}) from the monomial terms on (the expansion of the product of) the right hand side of \eqref{zeroth-step} which have $\omega$-value {\em greater than} $w^*$. It follows that $G_j$ has no effect on $F_{\psi_j}^{(k)}|_{> w^*}$, and therefore $F_{\psi_1}^{(k)}|_{> w^*} = F_{\psi_2}^{(k)}|_{> w^*}$, as required.\\

Now we prove Claim \ref{constructing-F-k}. For $\beta := (\beta_1, \ldots, \beta_{k}) \in \zz_{\geq 0}^{k}$, let us write 
$$|\beta|_{k-1} := \sum_{j=1}^{k-1} p_0 \ldots p_{j-1} \beta_j.$$
Consider the well order $\prec^*_{k-1}$ on $\zz_{\geq 0}^{k}$ defined as follows: $ \beta \prec^*_{k-1} \beta'$ iff 
\begin{enumerate}
\item $|\beta|_{k-1} <  |\beta'|_{k-1}$, or  
\item $|\beta|_{k-1} =  |\beta'|_{k-1}$ and the right-most non-zero entry of $\beta - \beta'$ is negative.
\end{enumerate}
Now we describe the process of constructing $G_{j,n+1}$ assuming we have already constructed $G_{j,n}$ such that $\pi_k(G_{j,n}) = f_{\psi_j}^{(k)}$. Write $G_{j,n}$ in the following form
\begin{align*}
G_{j,n} &= \sum_{\beta \in \zz_{\geq 0}^{k}} g_{j,n,\beta}(x) y_1^{\beta_1}\cdots y_{k}^{\beta_{k}} = \sum_{\beta \in \scrB_{n,0}}g_{j,n,\beta}(x) y_1^{\beta_1}\cdots y_{k}^{\beta_{k}}  + \sum_{\beta \in \scrB_{n,1}}g_{j,n,\beta}(x) y_1^{\beta_1}\cdots y_{k}^{\beta_{k}} ,\ \text{where} \\
\scrB_{n,0} &:= \{\beta \in \zz_{\geq 0}^{k}: g_{j,n,\beta} \neq 0\ \text{and}\ \beta_i < p_i\ \text{for all}\ i, 1 \leq i \leq k-1 \},\\
\scrB_{n,1} &:= \{\beta \in \zz_{\geq 0}^{k}: g_{j,n,\beta} \neq 0\ \text{and}\ \beta_i \geq p_i\ \text{for some}\ i, 1 \leq i \leq k-1 \}.
\end{align*} 
If $\scrB_{n,1} = \emptyset$, then $G_{j,n} = F_{\psi_j}^{(k)}$, and we {\bf stop}. Otherwise let $s_n := \max \{|\beta|_{k-1} :\beta \in \scrB_{n_1}\}$ and construct $G_{j,n+1}$ by applying the following procedure for all $\beta \in \scrB_{n,1}$ such that $|\beta|_{k-1} = s_n$: pick an $i$, $1 \leq i \leq k-1$, such that $\beta_i \geq p_i$ and replace $y_1^{\beta_1}\cdots y_{k}^{\beta_{k}}$ by \begin{align}
H_{j,n,\beta}(x,y_1, \ldots, y_k) := \left(\prod_{1 \leq j \leq k, j \neq i}y_j^{\beta_j} \right) y_i^{\beta_i-p_i}\left(y_{i+1} - (F_{i+1} - y_i^{p_i})\right). \label{sub-j-n-beta}
\end{align}
We exhibit the finiteness of the sequence by showing that $s_n$ decreases after finitely many steps. Indeed, let $H_{j,n,\beta}$ be as in \eqref{sub-j-n-beta}, and let 
$$\beta^* :=  \beta - p_i\vec{u}_i + \vec{u}_{i+1},$$
where $\vec{u}_i$ (resp.\ $\vec{u}_{i+1}$) is the $i$-th (respectively $(i+1)$-th) unit coordinate vector in $\zz^{k}$. Pick $\beta' \in \zz^k$ is such that $y_1^{\beta_1} \cdots y_{k}^{\beta_{k}}$ appears with non-zero coefficient in $H_{j,n,\beta}$. Then 
\begin{compactenum}
\item If $\beta' \neq \beta^*$, then Assertion \ref{F-k+1-agreement} of Lemma \ref{limit-of-agreement} and Assertion \ref{F-Phi-exponents} of Lemma \ref{F-Phi} imply that $|\beta'|_{k-1} < |\beta|_{k-1} = s_n$. 
\item For $\beta' = \beta^*$, there are two possibilities: 
\begin{compactenum}
\item \label{non-dropping-case} $i < k-1$, $|\beta'|_{k-1} = |\beta|_{k-1}$ and $\beta \prec^*_{k-1} \beta'$, or
\item $i = k-1$ and $|\beta'|_{k-1} < |\beta|_{k-1} = s_n$.
\end{compactenum}
\end{compactenum}
It follows that $s_{n+1} \leq s_n$, and the only way $s_{n+1}$ can equal $s_n$ if there is some $\beta \in \scrB_{n,1}$ which satisfies Case \ref{non-dropping-case}. But the definition of $\prec^*_{k-1}$ ensures that the latter scenario can repeat only finitely many times. Consequently $s_{n+1}$ eventually decreases, as required to complete the proof of Claim \ref{constructing-F-k} and Lemma \ref{F-phi-psi-inductive-agreement}. 
\end{proof}

\begin{cor} \label{stronger-phi-psi-agreement}
Fix $k$, $0 \leq k \leq l$. Let $\omega_{j+1, k+1}$, $0 \leq j \leq k$, be as in Assertion \ref{omega-k-k+1} of Proposition \ref{key-prop} and $r_{k+1}$ be as in Remark-Notation \ref{equivalent-remark}. Let $\psi \in \dpsxc$ be such that $\psi \equiv_{r_{k+1}} \phi_{k+1}$. Then for all $j$ such that $0 \leq j \leq k$,
$$\omega\left(F_{\psi}^{(j)} - F_{\phi_{k+1}}^{(j)}\right) \leq \omega_{j+1,  k+1}.$$
\end{cor}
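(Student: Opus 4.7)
The overall plan is to induct on $j$, using Lemma \ref{F-phi-psi-inductive-agreement} as the inductive engine. The recurrence $\omega_{j+2, k+1} = (p_{j+1}-1)\omega_{j+1} + \omega_{j+1, k+1}$ from Assertion \ref{omega-k-k+1} of Proposition \ref{essential-prop} dovetails exactly with the conclusion of that lemma, which bounds the agreement at level $j+1$ in terms of the agreement at level $j$ with an added $(p_{j+1}-1)\omega_{j+1}$.

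For the base case $j = 0$, I would invoke Lemma \ref{F-Phi}: since $\pi_1(y_1) = f_1 = y - \phi_1$, the element $y_1 + \phi_1 - \psi \in \tilde B_1$ maps under $\pi_1$ to $f_\psi^{(0)} = y - \psi$, so it equals $F_\psi^{(0)}$ by uniqueness. Doing the same for $\phi_{k+1}$ gives $F_\psi^{(0)} - F_{\phi_{k+1}}^{(0)} = \phi_{k+1} - \psi$, whose $\omega$-value is at most $\omega_0 \cdot \deg(\phi_{k+1} - \psi) \leq \omega_0 r_{k+1} = q_{k+1} p_{k+2} \cdots p_{l+1}$, which equals $\omega_{1, k+1}$ after using $\omega_1 = q_1 p_2 \cdots p_{l+1}$.

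For the inductive step $j \to j+1$ (with $j+1 \leq k$), the plan is to apply Lemma \ref{F-phi-psi-inductive-agreement} with the lemma's $k$ replaced by $j+1$, $\psi_1 := \psi$, $\psi_2 := \phi_{k+1}$, and $w := \omega_{j+1, k+1}$. The shared Puiseux pairs follow from $\psi \equiv_{r_{k+1}} \phi_{k+1}$ and $j+1 \leq k$; the agreement $F_\psi^{(j)}|_{>w} = F_{\phi_{k+1}}^{(j)}|_{>w}$ is just the inductive hypothesis; and $w \leq \omega_{j+1}$ reduces to the obvious inequality $r_{k+1} \leq r_{j+1}$ between characteristic exponents.

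The main obstacle is verifying the third hypothesis of Lemma \ref{F-phi-psi-inductive-agreement}, namely $\omega\left(F_{\psi_i}^{(j)}|_{>\omega_{j+1}} - F_{j+1}\right) \leq \omega_{j+1}$ for $i=1,2$. I would handle both cases uniformly as follows. From formula \eqref{phi-k}, $\phi_{k+1}$ and $\phi_{j+1}$ first differ at the degree-$r_{j+1}$ term of `group $j+1$', so $\phi_{k+1} \equiv_{r_{j+1}} \phi_{j+1}$; since $r_{k+1} \leq r_{j+1}$, the triangle-type inequality for $\deg$ also gives $\psi \equiv_{r_{j+1}} \phi_{j+1}$. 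Assertion \ref{F-phi-psi-agreement} of Lemma \ref{limit-of-agreement}, applied with its $k$ replaced by $j$ and its $\phi_{k+1}$ replaced by $\phi_{j+1}$, then yields
$$\omega\left(F_\psi^{(j)} - F_{\phi_{j+1}}\right) \leq \omega_{j+1} \quad\text{and}\quad \omega\left(F_{\phi_{k+1}}^{(j)} - F_{\phi_{j+1}}\right) \leq \omega_{j+1}.$$
Combining with Assertion \ref{F-k+1-agreement}, which identifies $F_{j+1} = F_{\phi_{j+1}}|_{>\omega_{j+1}}$, collapses both truncated differences to zero. The conclusion of Lemma \ref{F-phi-psi-inductive-agreement} is then
$$\omega\left(F_\psi^{(j+1)} - F_{\phi_{k+1}}^{(j+1)}\right) \leq (p_{j+1}-1)\omega_{j+1} + \omega_{j+1, k+1} = \omega_{j+2, k+1},$$
completing the induction.
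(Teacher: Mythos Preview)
Your overall strategy---induct on $j$ and feed the recursion $\omega_{j+2,k+1} = (p_{j+1}-1)\omega_{j+1} + \omega_{j+1,k+1}$ into Lemma~\ref{F-phi-psi-inductive-agreement}---is exactly the paper's approach, and your base case matches the paper's computation.

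There is, however, a circularity in how you verify hypothesis~\ref{omega-k-assumption} of Lemma~\ref{F-phi-psi-inductive-agreement}. You invoke Assertion~\ref{F-phi-psi-agreement} of Lemma~\ref{limit-of-agreement} (with its $k$ replaced by $j$) to obtain $\omega\bigl(F_\psi^{(j)} - F_{\phi_{j+1}}\bigr) \leq \omega_{j+1}$ and $\omega\bigl(F_{\phi_{k+1}}^{(j)} - F_{\phi_{j+1}}\bigr) \leq \omega_{j+1}$. But in the paper's logical order, Assertion~\ref{F-phi-psi-agreement} is \emph{deduced from} this very Corollary (it is literally the special case $j=k$), not established before it. So as written you are appealing to the statement you are in the process of proving.

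The repair is easy once spotted: the instance of Assertion~\ref{F-phi-psi-agreement} you need is nothing other than the Corollary for the smaller parameter pair $(k',j')=(j,j)$, and since $j \leq k-1 < k$ this is a strictly earlier case. If you phrase the induction as running over $j$ simultaneously for all $k \geq j$ (rather than fixing $k$ first), then the case $k'=j$ is already part of the inductive hypothesis at level $j$, and your argument goes through verbatim. The paper itself handles hypothesis~\ref{omega-k-assumption} more tersely, pointing only to Assertion~\ref{F-k+1-agreement} (which \emph{is} already proven) and the induction hypothesis, leaving the reader to unpack the comparison with $F_{j+1}$.
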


\begin{proof}
We prove the corollary by induction on $j$. For $j = 0$, recall (from Remark-Notation \ref{f-phi}) that $f_{\psi}^{(0)} = y - \psi(x)$ and $f_{\phi_{k+1}}^{(0)} = y - \phi_{k+1}(x)$. Let $\tilde \psi(x) := \psi(x) - \phi_{k+1}(x)$ and $\tilde \phi_{k+1}(x) := \phi_{k+1}(x) - \phi_1(x)$. Since $\pi_1(y_1) = y - \phi_1(x)$, it follows (from Remark-Notation \ref{equivalent-remark}) that 
$$F_{\phi_{k+1}}^{(0)} = y_1 - \tilde \phi_{k+1}(x),\quad \text{and}\quad F_{\psi}^{(0)} = y_1 - \tilde \phi_{k+1}(x) - \tilde \psi(x).$$
Since $\deg_x(\tilde \psi(x)) \leq r_{k+1} = \omega_{1,k+1}$, the corollary follows for $j = 0$.  \\

Now assume that the corollary is true for some $j$, $0 \leq j \leq k-1$. Then by induction hypothesis,
$$F_{\psi}^{(j)} = F_{\phi_{k+1}}^{(j)} + \tilde G_j$$
for some $\tilde G_j \in \tilde B_j$ with $\omega(\tilde G_j) \leq \omega_{j+1,  k+1}$. Assertion \ref{F-k+1-agreement} of Lemma \ref{limit-of-agreement} then implies that $\psi$ and $\phi_{k+1}$ satisfy the hypothesis of Lemma \ref{F-phi-psi-inductive-agreement} with $w := \omega_{j+1, k+1}$ and the $k$ of Lemma \ref{F-phi-psi-inductive-agreement} being $j+1$. Therefore Lemma \ref{F-phi-psi-inductive-agreement} implies that
$$\omega\left(F_{\psi}^{(j+1)} - F_{\phi_{k+1}}^{(j+1)}\right) \leq (p_{j+1}-1)\omega_{j+1} + \omega_{j+1,  k+1} = \omega_{j+2,k+1},$$
as required to complete the induction, and therefore the proof of the corollary.
\end{proof}

\begin{proof}[Proof of Assertion \ref{F-phi-psi-agreement} of Lemma \ref{limit-of-agreement}]
Since $\omega_{k+1} = \omega_{k+1,k+1}$ and $F_{\phi_{k+1}}^{(k)} = F_{\phi_{k+1}}$, Assertion \ref{F-phi-psi-agreement} of Lemma \ref{limit-of-agreement} is simply a special case of Corollary \ref{stronger-phi-psi-agreement}.
\end{proof}

\subsection{Proof of Lemma \ref{poly-positive-prop}} \label{positive-section}

In this section we freely use the notations of Section \ref{ordered-section}. For each $k$, $1 \leq k \leq l+1$, let $A_k := \cc[x,y_1, \ldots, y_k]$. \eqref{polynomial-cond} implies that $H_j \in A_{k-1}$ for $2 \leq j \leq k \leq l+1$. For each $k$, $1 \leq k \leq l+1$, let $S_k \subseteq \zz$ be the semigroup generated by $\omega_0, \ldots, \omega_k$; recall that \eqref{polynomial-cond} implies that $S_{k-1} \subseteq \zz_{\geq 0}$ (Remark \ref{polytivity-remark}).

\begin{lemma} \label{semi-isomorphism}
Fix $k$, $1 \leq k \leq l+1$. Assume \eqref{polynomial-cond} holds. For each $j$, $2 \leq j \leq k$, let $\bar J_{j}$ be the ideal in $A_{j-1}$ generated by $H_2, \ldots, H_{j}$. Then for each $j$, $1 \leq j \leq k-1$, there is an isomorphism 
$$A_j/\bar J_{j+1} \cong \cc[S_j] \cong \cc[t^{\omega_0}, \ldots, t^{\omega_j}],$$
where $t$ is an indeterminate; the isomorphism maps $x \mapsto t^{\omega_0}$ and $y_i \mapsto b_{j,i} t^{\omega_i}$, $1 \leq i \leq j$, for some $b_{j,1}, \ldots, b_{j,j} \in \cc^*$. 
\end{lemma}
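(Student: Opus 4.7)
The plan is to proceed by induction on $j$, exploiting the fact that each $H_{i+1}$ is a \emph{binomial} encoding the semigroup identity $p_i\omega_i = \sum_{r=0}^{i-1}\beta^0_{i,r}\omega_r$. Hypothesis \eqref{polynomial-cond} guarantees $\beta^0_{i,0}\geq 0$ for $i \leq k-1$ (otherwise $f_{i+1}= \pi(F_{i+1})$ would carry a negative power of $x$), so $H_2,\ldots,H_k \in A_{k-1}$ and the ideals $\bar J_{j+1}$ are genuinely ideals of $A_j$. Throughout the argument I will also use Remark~\ref{polytivity-remark}, which gives $\omega_0,\ldots,\omega_{k-1}>0$ under \eqref{polynomial-cond}.

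For the base case $j=1$, pick $b_{1,1}\in\cc^*$ with $b_{1,1}^{p_1} = c_{1,0}$ and consider the homomorphism $\phi_1 : \cc[x,y_1] \to \cc[S_1]$, $x\mapsto t^{\omega_0}$, $y_1\mapsto b_{1,1}t^{\omega_1}$. Surjectivity is obvious, and $\phi_1(H_2) = c_{1,0}t^{p_1\omega_1} - c_{1,0}t^{\beta^0_{1,0}\omega_0} = 0$ by the relation $p_1\omega_1 = \beta^0_{1,0}\omega_0$. To deduce that $\phi_1$ descends to an isomorphism on $A_1/(H_2)$, I will verify that source and target are both free of rank $p_1$ over $\cc[x]\cong\cc[t^{\omega_0}]$, on the bases $\{y_1^a:0\leq a<p_1\}$ and $\{t^{a\omega_1}:0\leq a<p_1\}$ respectively. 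Freeness of the source is immediate since $H_2$ is monic of degree $p_1$ in $y_1$; freeness of the target boils down to the \emph{uniqueness} of the representation $s=a\omega_0+b\omega_1$ with $0\leq b<p_1$ for every $s\in S_1$, which I extract from the minimality definition $p_1 = \min\{\alpha>0:\alpha\omega_1\in\zz\omega_0\}$ (Property~\ref{ess-next-property}(a) of essential key forms) combined with $\omega_0>0$.

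For the inductive step, assume $\psi_{j-1}:A_{j-1}/\bar J_j\xrightarrow{\sim}\cc[S_{j-1}]$, sending $y_i\mapsto b_{j-1,i}t^{\omega_i}$, has already been constructed. Write
$$A_j/\bar J_{j+1} \;\cong\; (A_{j-1}/\bar J_j)[y_j]\big/(\,\overline{H_{j+1}}\,).$$
Applying $\psi_{j-1}$ to the non-leading monomial of $H_{j+1}$, and using once again the semigroup identity $p_j\omega_j = \sum_r \beta^0_{j,r}\omega_r$, turns $\overline{H_{j+1}}$ into $y_j^{p_j} - c\,t^{p_j\omega_j}$ for some $c\in\cc^*$ depending on $c_{j,0}$ and on the $b_{j-1,i}^{\beta^0_{j,i}}$'s. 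Picking $b_{j,j}$ with $b_{j,j}^{p_j}=c$ and setting $b_{j,i}:=b_{j-1,i}$ for $i<j$ yields a surjection $\phi_j:A_j\to\cc[S_j]$ which factors through $A_j/\bar J_{j+1}$. The same rank-$p_j$ argument, now run over $\cc[S_{j-1}]\cong A_{j-1}/\bar J_j$ instead of over $\cc[x]$, closes the induction.

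The main obstacle is verifying, at each stage, the uniqueness of the representation $s = s' + a\omega_j$ with $s'\in S_{j-1}$ and $0\leq a<p_j$ for $s\in S_j$ --- this is what underlies freeness of $\cc[S_j]$ over $\cc[S_{j-1}]$ on the basis $\{t^{a\omega_j}:0\leq a<p_j\}$. Existence is immediate from $p_j\omega_j\in S_{j-1}$, while uniqueness again reduces to the minimality of $\alpha_j=p_j$ (if $(a-a')\omega_j\in\zz\omega_0+\cdots+\zz\omega_{j-1}$ with $|a-a'|<p_j$ then $a=a'$) together with positivity of $\omega_0,\ldots,\omega_{j-1}$ to force equality of the remaining coordinates. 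Once these two bookkeeping items are in place, the two free modules match basis-for-basis under $\phi_j$ and the isomorphism follows.
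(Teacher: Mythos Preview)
Your proof is correct and follows essentially the same inductive scheme as the paper: induct on $j$, use the inductive isomorphism to turn $\overline{H_{j+1}}$ into the binomial $y_j^{p_j}-c\,t^{p_j\omega_j}$, and then identify the quotient with $\cc[S_j]$ via $y_j\mapsto c^{1/p_j}t^{\omega_j}$. The only difference is one of emphasis: the paper simply asserts that $\cc[S_{j-1}][y_j]/(y_j^{p_j}-\tilde c\,t^{p_j\omega_j})\cong\cc[S_j]$ follows from the minimality property $p_j=\min\{\alpha>0:\alpha\omega_j\in\zz\omega_0+\cdots+\zz\omega_{j-1}\}$, whereas you unpack this step via the free-module argument (matching the bases $\{y_j^a\}_{0\le a<p_j}$ and $\{t^{a\omega_j}\}_{0\le a<p_j}$ over $\cc[S_{j-1}]$), which makes the role of minimality explicit. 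Your preliminary remark that \eqref{polynomial-cond} forces $\beta^0_{i,0}\ge 0$ is also something the paper takes for granted just before the lemma.
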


\begin{proof}
We prove Lemma \ref{semi-isomorphism} by induction on $j$. For $j=1$ note that
\begin{align*}
A_1/\bar J_2 = \cc[x,y_1]/\langle y_{1}^{p_1} - c_{1,0} x^{q_1} \rangle \cong \cc[t^{p_1}, t^{q_1}],
\end{align*}
where $t$ is an indeterminate and the isomorphism maps $x \mapsto t^{p_1}$ and $y_1 \mapsto c_{1,0}^{1/p_1}t^{q_1}$, where $c_{1,0}^{1/p_1}$ is a $p_1$-th root of $c_{1,0} \in \cc^*$. Since $\omega_0 = p_1p_2 \cdots p_l$ and $\omega_1 = q_1p_2 \cdots p_l$, this proves the lemma for $j = 1$. \\

So assume that the lemma is true for $j-1$, $2 \leq j \leq k$, i.e.\ there exists an isomorphism 
\begin{align*}
A_{j-1}/\bar J_{j} \cong \cc[t^{\omega_0},\ldots, t^{\omega_{j-1}}],
\end{align*}
which maps $x \mapsto t^{\omega_0}$ and $y_i \mapsto b_{j-1,i}t^{\omega_i}$, $1 \leq i \leq j-1$ for some $b_{j-1,1}, \cdots, b_{j-1,j-1} \in \cc^*$. It follows that 
\begin{align*}
A_{j}/\bar J_{j+1} 
	&= A_{j-1}[y_{j}]/\langle \bar J_{j}, y_{j}^{p_j} - c_{j,0} x^{\beta^0_{j,0}} y_1^{\beta^0_{j,1}} \cdots y_{j-1}^{\beta^0_{j,j-1}} \rangle 
	\cong \cc[t^{\omega_0},\ldots, t^{\omega_{j-1}},y_j]/ \langle y_{j}^{p_j} - \tilde c  t^{p_j\omega_j} \rangle
\end{align*}
for some $\tilde c \in \cc^*$ (the last isomorphism uses properties \ref{decreasing-omega} of key forms and Assertion \ref{p-key} of Proposition \ref{essential-prop}). Since $p_j = \min\{\alpha \in \zz_{> 0}; \alpha\omega_j \in \zz \omega_0 + \cdots + \zz \omega_{j-1}\}$ (due to Assertion \ref{p-key} of Proposition \ref{essential-prop}), it follows that 
\begin{align*}
\cc[t^{\omega_0},\ldots, t^{\omega_{j-1}},y_j]/ \langle y_{j}^{p_j} - \tilde c  t^{p_j\omega_j} \rangle \cong \cc[t^{\omega_0},\ldots, t^{\omega_{j}}]
\end{align*}
via a map which sends $y_j \to (\tilde c)^{1/p_j}t^{\omega_j}$ ($(\tilde c)^{1/p_j}$ being a $p_j$-th root of $\tilde c$), as required to complete the induction and prove the lemma.
\end{proof}

\begin{cor} \label{J-korollary}
Fix $k$, $1 \leq k \leq l+1$. Let $J_k$ be the ideal in $A_k$ from Assertion \ref{grobner-assertion} of Lemma \ref{poly-positive-prop}. If \eqref{polynomial-cond} holds, then $J_k = \bar J_kA_k$, where $\bar J_k$ is as in Lemma \ref{semi-isomorphism}.
\end{cor}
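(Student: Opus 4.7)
The plan is to interpret $J_k$ via an associated graded construction. Note that $A_k$ carries the weighted grading by $\omega$ while $\cc[x,y]$ carries the ascending filtration $F^{\delta_k}_d := \{f \in \cc[x,y] : \delta_k(f) \leq d\}$. Since $\omega(y_i) = \omega_i = \delta_k(f_i)$ under assumption \eqref{polynomial-cond}, the homomorphism $\pi_k$ maps $F^\omega_d \subseteq A_k$ into $F^{\delta_k}_d$, and therefore descends to a graded ring homomorphism
$$\bar\pi_k: A_k \longrightarrow \gr_{\delta_k}\cc[x,y].$$
For any $\omega$-homogeneous $G \in A_k$, unwinding definitions shows that $\bar\pi_k(G) = 0$ if and only if $\delta_k(\pi_k(G)) < \omega(G)$. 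Since $J_k$ is $\omega$-graded and generated by the leading $\omega$-forms of those $F \in A_k$ with $\delta_k(\pi_k(F)) < \omega(F)$, a short verification will yield the identification $J_k = \ker\bar\pi_k$.

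The inclusion $\bar J_k A_k \subseteq J_k$ is then immediate: for each $j$ with $1 \leq j \leq k-1$ one has $H_{j+1} = \ld_\omega(F_{j+1})$, while Property \ref{ess-next-property} gives $\delta_k(\pi_k(F_{j+1})) = \omega_{j+1} < p_j \omega_j = \omega(F_{j+1})$. Hence each $H_{j+1}$ lies in $J_k$, and $\bar\pi_k$ factors through a graded surjection
$$\psi: A_k/\bar J_k A_k \twoheadrightarrow \gr_{\delta_k}\cc[x,y].$$
The task reduces to showing that $\psi$ is injective.

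For the injectivity I would use Lemma \ref{semi-isomorphism} to identify
$$A_k/\bar J_k A_k \;\cong\; (A_{k-1}/\bar J_k)[y_k] \;\cong\; \cc[t^{\omega_0}, \ldots, t^{\omega_{k-1}}][y_k],$$
which is a polynomial ring in $y_k$ over a one-dimensional numerical semigroup algebra; in particular a two-dimensional integral domain. On the other side, $\gr_{\delta_k}\cc[x,y]$ is a domain because $-\delta_k$ is a valuation (so leading forms of nonzero elements multiply to the leading form of their product), and its Krull dimension equals $2$, since $\proj\gr_{\delta_k}\cc[x,y]$ is the one-dimensional curve at infinity of the projective surface $\proj \cc[x,y]^{\delta_k}$. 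The surjectivity of $\psi$ follows from Property \ref{ess-generating-property} combined with Corollary \ref{minimum-cor}: under \eqref{polynomial-cond}, the minimum in Property \ref{ess-generating-property} is attained by some $F \in A_k$, so every class $\bar f \in \gr_{\delta_k}\cc[x,y]$ lies in the subalgebra generated by $\bar x, \bar f_1, \ldots, \bar f_k$, that is, in the image of $\psi$.

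The proof then concludes with the standard observation that a surjection of finite-type integral $\cc$-algebras of the same Krull dimension is forced to be an isomorphism, since its kernel is a prime ideal of height zero and hence must vanish in a domain. This yields $\ker\psi = 0$ and consequently $J_k = \bar J_k A_k$. The principal obstacle in this approach is the verification that $\gr_{\delta_k}\cc[x,y]$ is genuinely a two-dimensional integral domain; once this (together with the geometric identification of its $\proj$) is in place, all remaining steps are either formal or rest directly on Lemma \ref{semi-isomorphism} and the defining properties of essential key forms.
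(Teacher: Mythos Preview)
Your overall architecture---identify $J_k$ with the kernel of the graded map $\bar\pi_k: A_k \to \gr_{\delta_k}\cc[x,y]$, factor through $A_k/\bar J_kA_k$, and show the induced map is injective---is sound, and Lemma \ref{semi-isomorphism} is indeed the right structural input. The difficulty is that both pillars of your injectivity argument are, in the paper's logical order, \emph{consequences} of the very corollary you are proving.

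First, for the surjectivity of $\psi$ you invoke Corollary \ref{minimum-cor}. But the paper's proof of Corollary \ref{minimum-cor} uses Corollary \ref{J-korollary} in an essential way: the reduction step ``$\ld_\omega(F)\in J_k$, therefore $\ld_\omega(F)=\sum G_jH_j$'' is exactly the statement $J_k=\bar J_kA_k$. Without this, Property \ref{ess-generating-property} only gives you a preimage $F\in B_k=\cc[x,x^{-1},y_1,\dots,y_k]$ with $\omega(F)=\delta_k(f)$, which may involve negative powers of $x$ and so need not lie in $A_k$; hence $\bar\pi_k$ is not obviously surjective. Second, your dimension computation for $\gr_{\delta_k}\cc[x,y]$ via $\proj$ presupposes that $\cc[x,y]^{\delta_k}$ is finitely generated (so that $\proj$ is a projective surface), which is the content of Assertion \ref{wp-assertion} of Lemma \ref{poly-positive-prop}---again proved downstream of Corollary \ref{J-korollary}.

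The paper avoids both circularities by arguing directly inside $A_k/\bar J_kA_k\cong \cc[t^{\omega_0},\dots,t^{\omega_{k-1}}][y_k]$. Since $\delta_k$ is a semidegree, $J_k$ is prime; its image is then a homogeneous prime in the semigroup ring, and lifting along the integral extension $\cc[t^{\omega_0},\dots,t^{\omega_{k-1}}][y_k]\hookrightarrow\cc[t,y_k]$ reduces to listing the homogeneous primes of $\cc[t,y_k]$: $\langle t\rangle$, $\langle y_k\rangle$, $\langle t,y_k\rangle$, or $\langle y_k-ct^{\omega_k}\rangle$. Each is ruled out by hand ($x\notin J_k$, $y_k\notin J_k$, and the last because $f_k$ is the \emph{last} essential key form of $\delta_k$). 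Your argument could be rescued along similar lines---e.g.\ by showing directly that the image of $\psi$ has Krull dimension $2$ (say via algebraic independence of $\bar x$ and $\bar f_k$, using that no relation $y_k^m - c x^{\beta_0}y_1^{\beta_1}\cdots y_{k-1}^{\beta_{k-1}}$ can lie in $J_k$)---but that is essentially the paper's case analysis in disguise, and you would still need to supply it.
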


\begin{proof}
Since $f_0, \ldots, f_k$ are essential key forms of $\delta_k$ (Assertion \ref{essential-k} of Proposition \ref{essential-prop}), it follows from the definitions of $J_k$ and $\bar J_k$ and Property \ref{decreasing-omega} of essential key forms that $\bar J_k \subseteq J_k$. We prove $\bar J_k = J_k$ by contradiction, so assume that $\bar J_k \subsetneq J_k$. Lemma \ref{semi-isomorphism} implies that there is an isomorphism
\begin{align*}
\bar \chi_k: A_k/\bar J_k \cong R_k := \cc[t^{\omega_0}, \ldots, t^{\omega_{k-1}}, y_k],
\end{align*}
where $\bar \chi_k(x) = t^{\omega_0}$ and $\bar \chi_k(y_j) = c_jt^{\omega_j}$ for some $c_j \in \cc^*$ for all $j$, $1 \leq j \leq k-1$. Observe that 
\begin{enumerate}[(i)]
\item \label{graded-observation} $\bar \chi_k$ is an isomorphism of graded $\cc$-algebras, where $A_k$ is graded by $\omega$ and $R_k \subseteq \cc[t,y_k]$ is graded by the weighted degree corresponding to weights $1$ for $t$ and $\omega_k$ for $y_k$. 
\item $J_k$ is (by definition) a homogeneous ideal of $A_k$ with respect to the grading by $\omega$.
\item $J_k$ is a prime ideal of $A_k$ (since $\delta_k$ is a {\em semidegree}).
\item $\cc[t,y_k]$ is integral over $R_k$.
\end{enumerate}
Since $\bar J_k \subsetneq J_k$, the above observations imply that $\bar \chi_k(J_k) = \ppp \cap R_k$ for a non-zero prime ideal $\ppp$ of $\cc[t,y_k]$. Moreover, $\ppp$ is homogeneous with respect to the grading on $\cc[t,y_k]$ from observation \eqref{graded-observation}, which implies that one of the following scenarios must occur:
\begin{enumerate}
\item $\ppp = \langle t, y_k \rangle$ or $\ppp = \langle t \rangle$, so that $t^{\omega_0} \in \bar \chi_k(J_k)$, and consequently $x \in J_k$.
\item $\ppp = y_k$, so that $y_k \in \bar \chi_k(J_k)$, and consequently $y_k \in J_k$.
\item $\ppp = \langle y_k - ct^{\omega_k} \rangle$ for some $c \in \cc^*$, so that $y_k^{m} - \tilde c t^{n} \in \bar \chi_k(J_k)$ for some $\tilde c \in \cc^*$ and positive integers $m, n$, and consequently $y_k^{m} - \tilde c x^{\beta_0}y_1^{\beta_1} \cdots y_{k-1}^{\beta_{k-1}} \in J_k$ for some $\beta_0, \ldots, \beta_{k-1} \in \zz_{\geq 0}$.
\item \label{last-scenario} $\ppp = \langle t^{-\omega_k} y_k - c \rangle$ for some $c \in \cc^*$, so that $t^n y_k^{m} - \tilde c \in \bar \chi_k(J_k)$ for some $\tilde c \in \cc^*$ and positive integers $m, n$, and consequently $x^{\beta_0}y_k^{m} - \tilde c y_1^{\beta_1} \cdots y_{k-1}^{\beta_{k-1}} \in J_k$ for some $\beta_0, \ldots, \beta_{k-1} \in \zz_{\geq 0}$.
\end{enumerate}
Note that the last scenario can occur only if $\omega_k \leq 0$. Now, $x \not\in J_k$ and $y_k \not\in J_k$ by definition of $J_k$, and the last two scenarios cannot occur, since $f_k$ is the {\em last} key form of $\delta_k$. In particular, each of the scenarios lead to a contradiction. This completes the proof of the corollary. 
\end{proof}

\begin{cor} \label{minimum-cor}
Fix $k$, $1 \leq k \leq l+1$. If \eqref{polynomial-cond} holds, then 
\begin{align}\label{delta-k-omega}
\delta_k(f) = \min\{\omega(F): F \in A_k,\ \pi_k(F) = f\}\ \text{for all}\ f \in \cc[x,y]\setminus\{0\}.
\end{align}
\end{cor}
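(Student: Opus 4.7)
The plan is to derive the corollary from Assertion~\ref{wp-assertion} of Lemma~\ref{poly-positive-prop} by a graded dehomogenization argument, combined with the already-established characterization of $\delta_k$ over the \emph{larger} ring $B_k$. The starting observation is that by Assertion~\ref{essential-k} of Proposition~\ref{essential-prop} the essential key forms of $\delta_k$ are precisely $f_0,\ldots,f_k$, so Property~\ref{ess-generating-property} of Corollary~\ref{essential-corollary} applied to $\delta_k$ gives
$$\delta_k(f) = \min\{\omega(F) : F \in B_k,\ \pi_k(F) = f\}\quad \text{for every}\ f \in \cc[x,y]\setminus\{0\}.$$
Since $A_k \subseteq B_k$, the right-hand side of \eqref{delta-k-omega} is automatically $\geq \delta_k(f)$, so the task reduces to producing, for each such $f$, an element $F \in A_k$ with $\pi_k(F) = f$ and $\omega(F) \leq \delta_k(f)$.

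For this I would regard the isomorphism $\chi : \tilde A_k/\tilde J_k \xrightarrow{\sim} \cc[x,y]^{\delta_k}$ of Assertion~\ref{wp-assertion} of Lemma~\ref{poly-positive-prop} as a \emph{graded} isomorphism: grade $\tilde A_k = A_k[z]$ by $\omega$ (with $\omega(z) := 1$), grade $\cc[x,y]^{\delta_k} = \bigoplus_{d\geq 0} F^{\delta_k}_d$ in the obvious way, and check that $\chi$ sends $x \mapsto (x)_{\omega_0}$, $y_j \mapsto (f_j)_{\omega_j}$, $z \mapsto (1)_1$. This compatibility is a direct weight check on the defining relations $y_{j+1}z^{p_j\omega_j - \omega_{j+1}} - \tilde F_{j+1}$ of $\tilde J_k$ in \eqref{tilde-F-j}: each has $\omega$-weight $p_j\omega_j$, matching $(f_{j+1})_{p_j\omega_j}$ on the Rees-algebra side. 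A second, equally immediate, verification is that dehomogenizing by $z := 1$ on the $\tilde A_k$ side corresponds to the Rees dehomogenization $(f)_d \mapsto f$ on the $\cc[x,y]^{\delta_k}$ side, so that the composite
$$A_k \hookrightarrow \tilde A_k \twoheadrightarrow \tilde A_k/\tilde J_k \xrightarrow{\chi} \cc[x,y]^{\delta_k} \twoheadrightarrow \cc[x,y]$$
coincides with $\pi_k|_{A_k}$.

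With this diagram in hand the proof is immediate. Given $f \in \cc[x,y]\setminus\{0\}$, set $d := \delta_k(f)$. Then $(f)_d$ is homogeneous of degree $d$ in $\cc[x,y]^{\delta_k}$, hence $\chi^{-1}((f)_d)$ is represented by some $\omega$-homogeneous $\tilde F \in \tilde A_k$ of $\omega$-weight $d$. Setting $F := \tilde F|_{z=1} \in A_k$, the commutative diagram gives $\pi_k(F) = f$, while dehomogenizing an $\omega$-homogeneous element by $z=1$ cannot increase $\omega$, so $\omega(F) \leq d = \delta_k(f)$. This is the element we sought. The only nontrivial point in the argument is the graded compatibility of $\chi$, but that reduces to a one-line weight balance in \eqref{tilde-F-j}; everything else is formal, simply trading the possibly negative-exponent-in-$x$ representatives available in $B_k$ for an honest polynomial lift in $A_k$.
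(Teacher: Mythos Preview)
Your argument is circular relative to the paper's logical structure. You invoke Assertion~\ref{wp-assertion} of Lemma~\ref{poly-positive-prop} (the isomorphism $\tilde A_k/\tilde J_k \cong \cc[x,y]^{\delta_k}$) as a black box, but in the paper that assertion is proved \emph{after} Corollary~\ref{minimum-cor}, and its proof uses Corollary~\ref{minimum-cor} in an essential way: the surjectivity of the graded map $\tilde\pi_k:\tilde A_k \to \cc[x,y]^{\delta_k}$ is established precisely by citing Corollary~\ref{minimum-cor}. In fact surjectivity of $\tilde\pi_k$ in degree $d = \delta_k(f)$ is exactly the statement that $(f)_d$ has an $\omega$-homogeneous preimage of weight $d$, i.e.\ that some $F\in A_k$ satisfies $\pi_k(F)=f$ and $\omega(F)\leq\delta_k(f)$ --- which is the nontrivial inequality in \eqref{delta-k-omega}. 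So the step where you pull back $(f)_d$ through $\chi^{-1}$ presupposes what has to be shown.

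The paper breaks the circle by arguing directly from Corollary~\ref{J-korollary} (itself resting only on the toric description of $A_{j}/\bar J_{j+1}$ in Lemma~\ref{semi-isomorphism}). One takes any lift $F\in A_k$ of $f$ (Remark~\ref{pijective-remark} guarantees one exists); if $\omega(F)>\delta_k(f)$ then $\ld_\omega(F)\in J_k$, so by Corollary~\ref{J-korollary} one can write $\ld_\omega(F)=\sum_j G_jH_j$ with $G_j$ $\omega$-homogeneous, and the replacement $F\mapsto F-\sum_j G_j(F_j-y_j)$ strictly lowers $\omega(F)$ while keeping $\pi_k(F)=f$. Iterating gives a lift with $\omega$-value $\delta_k(f)$. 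This syzygy-rewriting is the actual content; Assertion~\ref{wp-assertion} is then deduced from Corollary~\ref{minimum-cor}, not conversely. To fix your write-up, replace the appeal to $\chi$ by this reduction step using $J_k=\bar J_kA_k$.
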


\begin{rem} \label{pijective-remark}
\eqref{polynomial-cond} in particular implies that $f_1 = y - h(x)$ for some polynomial $h \in \cc[x,y]$; it follows in particular that $\pi_k:A_k \to \cc[x,y]$ is surjective, and therefore the number on the right hand side of \eqref{delta-k-omega} is well defined for all $f \in \cc[x,y]\setminus\{0\}$.
\end{rem}

\begin{proof}

Let $\delta'_k: \cc[x,y]\setminus\{0\} \to \zz$ be defined by the formula on the right hand side of \eqref{delta-k-omega}. Since $\delta_k(x) = \omega(x)$ and $\delta_k(f_j) = \omega(y_j)$ for all $j$, $1 \leq j \leq k$, it immediately follows (from the definition of degree-like functions) that $\delta_k \leq \delta'_k$. We prove $\delta_k = \delta'_k$ by contradiction. So assume there exists $f \in \cc[x,y]\setminus\{0\}$ such that $\delta_k(f) < \delta'_k(f)$. By definition of $\delta'_k$, there exists $F \in A_k$ such that $\pi(F) = f$ and $\omega(F) = \delta'_k(f)$. Then the leading weighted homogeneous form (with respect to $\omega$) $\ld_\omega(F)$ of $F$ belongs to $J_k$, and therefore $\ld_\omega(F) = \sum_{j=2}^k G_jH_j$ for some polynomials $G_2, \ldots, G_k \in A_k$ which are weighted homogeneous with respect to $\omega$. Let $F' := F - \sum_{j=2}^k G_j(F_j - y_j)$. Since $\omega(F_j) > \omega_j = \omega(y_j)$ for $2 \leq j \leq k$, it follows that $\omega(F') < \omega(F)$. But then $\pi_k(F') = f$ and $\omega(F') < \delta'_k(f)$, which contradicts the definition of $\delta'_k$. This completes the proof of the corollary.
 \end{proof}

\begin{proof}[Proof of Assertion \ref{wp-assertion} of Lemma \ref{poly-positive-prop}]
Consider the map $\tilde \pi_k: \tilde A_k \to \cc[x,y]^{\delta_k}$ which is defined as follows: for $\tilde F \in \tilde A_k$, express $\tilde F$ as $\tilde F = \tilde F_1 + \cdots + \tilde F_m$, where $\tilde F_j$'s are weighted homogeneous with respect to $\omega$, and set
\begin{align}\label{tilde-pi-k}
\tilde \pi_k(\tilde F) := \sum_{j=1}^m \left(\pi_k\left(\left.\tilde F_j\right|_{z=1}\right)\right)_{\omega(\tilde F_j)},
\end{align}
where on the right hand side we used the notation from Remark \ref{(f)_d-remark}; note that the right hand side of is well defined since for all $j$, $1 \leq j \leq m$,
$$\omega(\tilde F_j) 
		\geq \omega\left(\left.\tilde F_j\right|_{z=1}\right) 
		\geq \delta_k\left(\pi_k\left(\left. \tilde F_j\right|_{z=1} \right)\right).$$
It is straightforward to see that $\tilde \pi_k$ is a graded $\cc$-algebra homomorphism, where $\tilde A_k$ is graded by $\omega$, and the grading on $\cc[x,y]^{\delta_k}$ is the natural one induced by $\delta_k$. Corollary \ref{minimum-cor} implies that $\tilde \pi_k$ is surjective. We now show that $\ker \tilde \pi_k = \tilde J_k$. Indeed, it follows (from the definition of $\tilde J_k$) that $\ker \tilde \pi_k \supseteq \tilde J_k $. The inclusion in the opposite direction we prove by contradiction. So assume that $\ker \tilde \pi_k \supsetneq \tilde J_k$. Let $\tilde \omega := \min\{\omega(\tilde G): \tilde G \in \ker \tilde \pi_k\setminus \tilde J_k\}$ (\eqref{positivity-cond} ensures that $\tilde \omega$ is a positive integer), and $\tilde F \in \ker \tilde \pi_k\setminus \tilde J_k$ such that $\omega(\tilde F) = \tilde \omega$. Then $\ld_\omega(\tilde F) \in J_k$, so that Corollary \ref{J-korollary} implies that $\ld_\omega(\tilde F) = \sum_{j=2}^k G_jH_j$ for some polynomials $G_2, \ldots, G_k \in A_k$ which are weighted homogeneous with respect to $\omega$. Let 
$$\tilde F' := \tilde F - \sum_{j=2}^k G_j\left(y_{j}z^{p_{j-1}\omega_{j-1}-\omega_{j}} - \tilde F_{j}\right).$$
It follows that $\tilde F' \in \ker \tilde \pi_k\setminus \tilde J_k$ and $\omega(\tilde F') < \omega(\tilde F) = \tilde \omega$, which contradicts the minimality of $\tilde \omega$. It follows that $\ker \tilde \pi_k = \tilde J_k$, as required to complete the proof of Assertion \ref{wp-assertion} of Lemma \ref{poly-positive-prop}.
\end{proof}

\begin{proof}[Proof of Assertion \ref{grobner-assertion} of Lemma \ref{poly-positive-prop}]
Corollary \ref{J-korollary} shows that $\scrB_k :=(H_{k}, \ldots, H_2)$ generates $J_k$. Therefore, to show that $\scrB_k$ is a Gr\"obner basis of $J_k$ with respect to $\prec_k$, it suffices to show that running one step of {\em Buchberger's algorithm} with $\scrB_k$ as input leaves $\scrB_k$ unchanged. We follow Buchberger's algorithm as described in \cite[Section 2.7]{littlesheacox}, which consists of performing the following steps for each pair of $H_i, H_j \in \scrB_k$, $2 \leq i < j \leq k$:

\paragraph{Step 1: Compute the {\em S-polynomial} $S(H_i, H_j)$ of $H_i$ and $H_j$.} The leading terms of $H_i$ and $H_j$ with respect to $\prec_k$ are
$$\lt_{\prec_k}(H_i) = y_{i-1}^{p_{i-1}}\ \text{and}\ \lt_{\prec_k}(H_j) = y_{j-1}^{p_{j-1}},$$
and therefore, the {\em S-polynomial} of $H_i$ and $H_j$ is
\begin{align*}
S(H_i, H_j) &:= y_{j-1}^{p_{j-1}}H_i - y_{i-1}^{p_{i-1}}H_j  \\
			&= -\left(c_{i-1,0}x^{\beta^0_{i-1,0}} y_1^{\beta^0_{i-1,1}} \cdots y_{i-2}^{\beta^0_{i-1,i-2}}\right)y_{j-1}^{p_{j-1}} 
				+ \left(c_{j-1,0}x^{\beta^0_{j-1,0}} y_1^{\beta^0_{j-1,1}} \cdots y_{j-2}^{\beta^0_{j-1,j-2}}\right)y_{i-1}^{p_{i-1}}.
\end{align*}

\paragraph{Step 2: Divide $S(H_i, H_j)$ by $\scrB_k$ and if the remainder is non-zero, then adjoin it to $\scrB_k$.} Since $i < j$, the leading term of $S(H_i,H_j)$ is 
\begin{align*}
\lt_{\prec_k}\left(S(H_i, H_j)\right) &= -\left(c_{i-1,0}x^{\beta^0_{i-1,0}} y_1^{\beta^0_{i-1,1}} \cdots y_{i-2}^{\beta^0_{i-1,i-2}}\right)y_{j-1}^{p_{j-1}}.
\end{align*}
Since $\beta^0_{i-1,j'} < p_{j'}$ for all $j'$, $1 \leq j' \leq i-1$ (Property \ref{exponent-bound} of key forms), it follows that $H_j$ is the only element of $\scrB_k$ such that $\lt_{\prec_k}(H_j)$ divides $\lt_{\prec_k}\left(S(H_i, H_j)\right)$. The remainder of the division of $S(H_i, H_j)$ by $H_j$ is
\begin{align*}
S_1 &:= S(H_i, H_j) + \left(c_{i-1,0}x^{\beta^0_{i-1,0}} y_1^{\beta^0_{i-1,1}} \cdots y_{i-2}^{\beta^0_{i-1,i-2}}\right)H_j
	= \left(c_{j-1,0}x^{\beta^0_{j-1,0}} y_1^{\beta^0_{j-1,1}} \cdots y_{j-2}^{\beta^0_{j-1,j-2}}\right)H_i,
\end{align*}
so that the leading term of $S_1$ is 
\begin{align*}
\lt_{\prec_k}(S_1) &= \left(c_{j-1,0}x^{\beta^0_{j-1,0}} y_1^{\beta^0_{j-1,1}} \cdots y_{j-2}^{\beta^0_{j-1,j-2}}\right)y_{i-1}^{p_{i-1}}.
\end{align*}
It follows as in the case of $S(H_i,H_j)$ that $H_i$ is the only element of $\scrB_k$ whose leading term divides $\lt_{\prec_k}(S_1)$. Since $H_i$ divides $S_1$, the remainder of the division of $S_1$ by $H_i$ is zero, and it follows that the remainder of the division of $S(H_i, H_k)$ by $\scrB_k$ is zero. Consequently {\bf Step 2 concludes without changing $\scrB_k$}.\\

It follows from the preceding paragraphs that running one step of Buchberger's algorithm keeps $\scrB_{k}$ unchanged, and consequently $\scrB_{k}$ is a Gr\"obner basis of $J_{k}$ with respect to $\prec_{k}$ \cite[Theorem 2.7.2]{littlesheacox}. This completes the proof of Assertion \ref{grobner-assertion} of Lemma \ref{poly-positive-prop}.
\end{proof}

\bibliographystyle{alpha}
\bibliography{bibi}

\end{document}